\newcommand*\Acal{\mathcal A}
\newcommand*\Ccal{\mathcal C}
\newcommand*\Mcal{\mathcal M}
\newcommand*\Pcal{\mathcal P}
\newcommand*\Qcal{\mathcal Q}
\newcommand*\Rcal{\mathcal R}
\newcommand*\Nbb{\mathbb N}
\newcommand*\Rbb{\mathbb R}
\newcommand*\nsubset{\not\subset}
\newcommand*\loc{\mathrm{loc}}
\newcommand*\fm{\mathrm{fin}}
\newcommand*\PI{\mathrm{PI}}
\newcommand*\dbl{\mathrm{dbl}}
\newcommand*\fcrim{\hspace{0.08333em}}
\newcommand*\emb{\hookrightarrow}
\newcommand*\dd{\mathrm{d}}
\newcommand*\cconc{c_{\scriptscriptstyle{\!\vartriangle}}}
\newcommand*\upto{\nearrow}
\newcommand*\NX{{N^1\!X}}
\newcommand*\eps{\varepsilon}
\newcommand*\Mod{\mathop{\mathrm{Mod}}\nolimits}
\newcommand*\diam{\mathop{\mathrm{diam}}\nolimits}
\newcommand*\dist{\mathop{\mathrm{dist}}\nolimits}
\newcommand*\spt{\mathop{\mathrm{spt}}\nolimits}
\newcommand*\Id{\mathop{\mathrm{Id}}\nolimits}
\newcommand*\essinf{\mathop{\mathrm{ess\,inf}}}
\newcommand*\ri{r.i.\@ }
\newcommand*{\coloneq}{\mathrel{\vcenter{\baselineskip0.5ex \lineskiplimit0pt\hbox{\scriptsize.}\hbox{\scriptsize.}}}=}
\newcommand*{\eqcolon}{=\mathrel{\vcenter{\baselineskip0.5ex \lineskiplimit0pt\hbox{\scriptsize.}\hbox{\scriptsize.}}}}
\newcommand*{\limplus}{{\mathchoice{\raise.17ex\hbox{$\scriptstyle +$}}
                {\raise.17ex\hbox{$\scriptstyle +$}}
                {\raise.1ex\hbox{$\scriptscriptstyle +$}}
                {\scriptscriptstyle +}}}
\newcommand*{\wMX}{M^*(X)}
\newcommand*{\meas}[1]{\mu \mathopen{}\left(#1\right)\mathclose{}}
\newcommand*{\measl}[1]{\mathopen{}\left|#1\right|\mathclose{}}
\newcommand*{\suplev}[2]{\mathcal{L}_{#1} (#2)}
\newcommand*{\Suplev}[2]{\mathcal{L}_{#1} \mathopen{}\left(#2\right)\mathclose{}}
\newcommand*{\suplevshp}[2]{\mathcal{L}^\sharp_{#1} (#2)}
\newcommand*{\suplevp}[3]{\mathcal{L}^{#1}_{#2} (#3)}
\newcommand*{\Suplevp}[3]{\mathcal{L}^{#1}_{#2} \mathopen{}\left(#3\right)\mathclose{}}
\newcommand*{\suplevr}[2]{\mathcal{J}_{#1} (#2)}
\renewcommand*{\theenumi}{(\alph{enumi})}
\newcommand*{\itoverline}[1]{\skew{3}{\overline}{#1}}
\newcommand*{\reps}[1]{\skew{3}{\overline}{#1}}
\theoremstyle{plain}
\newtheorem{thm}{Theorem}[section]
\newtheorem{lem}[thm]{Lemma}
\newtheorem{pro}[thm]{Proposition}
\newtheorem{cor}[thm]{Corollary}
\theoremstyle{definition}
\newtheorem{df}[thm]{Definition}
\newtheorem{exa}[thm]{Example}
\newtheorem{rem}[thm]{Remark}
\numberwithin{equation}{section}
\begin{document}
%
%
%
%
\begin{abstract}
Density of Lipschitz functions in Newtonian spaces based on quasi-Banach function lattices is discussed. Newtonian spaces are first-order Sobolev-type spaces on abstract metric measure spaces defined via (weak) upper gradients. Our main focus lies on metric spaces with a doubling measure that support a $p$-Poincar\'e inequality. Absolute continuity of the function lattice quasi-norm is shown to be crucial for approximability by (locally) Lipschitz functions. The proof of the density result uses, among others, that a suitable maximal operator is locally weakly bounded. In particular, various sufficient conditions for such boundedness on rearrangement-invariant spaces are established and applied.
\end{abstract}
%
%
%
%
\title[Regularization of Newtonian functions via weak boundedness of maximal operators]{Regularization of Newtonian functions on metric spaces\\via weak boundedness of maximal operators}
\author{Luk\'{a}\v{s} Mal\'{y}}
\date{April 28, 2014}
\subjclass[2010]{Primary 46E35; Secondary 30L99, 42B25, 46E30.}
\keywords{Newtonian space, Sobolev-type space, metric measure space, upper gradient, Banach function lattice, rearrangement-invariant space, maximal operator, Lipschitz function, regularization, weak boundedness, density of Lipschitz functions}
\address{Department of Mathematics\\Link\"{o}ping University\\SE-581~83~Link\"{o}ping\\Sweden}
\address{Department of Mathematical Analysis\\Faculty of Mathematics and Physics\\Charles University in Prague\\Sokolovsk\'a 83\\CZ-186~75~Praha 8\\Czech Republic}
\email{lukas.maly@liu.se}
\thanks{The author was partly supported by The Matts Ess\'en Memorial Fund.}
\maketitle{}
%
%
%
%
\section{Introduction}
\label{sec:intro}
{\setlength{\parskip}{0pt plus 0.5ex minus 0.2ex}
Newtonian functions represent an analogue and a generalization of first-order Sobolev functions in metric measure spaces. The notion of a distributional gradient relies heavily on the linear structure of $\Rbb^n$, which is missing in the setting of metric spaces. In the Newtonian theory, the distributional gradients are replaced by the so-called upper gradients or weak upper gradients, which were originally introduced by Heinonen and Koskela~\cite{HeiKos0} and Koskela and MacManus~\cite{KosMac}, respectively. The foundations for the Newtonian spaces $N^{1,p}$, based on the $L^p$ norm of a function and its (weak) upper gradient (i.e., corresponding to the classical Sobolev spaces $W^{1,p}$) were laid by Shanmugalingam~\cite{Sha}. In the past two decades, various authors have developed the elements of the Newtonian theory based on other function norms, see e.g.\@~\cite{CosMir,HarHasPer,Pod,Tuo}. Most recently, general complete quasi-normed lattices of measurable functions were considered as the base function space in Mal\'y~\cite{Mal1,Mal2}.

For many applications of the classical Sobolev spaces, it is of utmost importance that smooth functions are dense and provide good approximations of Sobolev functions. On metric spaces, the notion of a derivative, and hence of a smooth function, is unavailable; nevertheless, we may consider regularity in terms of (local) Lipschitz continuity. Such a regularity condition has turned out to suffice in many cases, e.g., within non-linear potential theory, see Bj\"{o}rn and Bj\"{o}rn~\cite{BjoBjo}. It has been shown already in Shanmugalingam's work~\cite{Sha} that Lipschitz functions are dense in $N^{1,p}(\Pcal)$ provided that $\Pcal$ is endowed with a doubling measure and supports a $p$-Poincar\'e inequality (see Definition~\ref{df:pPI} below). Tuominen~\cite{Tuo} has proven a similar result for Orlicz--Newtonian spaces with doubling Young function, while replacing the $p$-Poincar\'e inequality by an Orlicz-type Poincar\'e inequality.

Costea and Miranda~\cite{CosMir} studied the density of Lipschitz functions in Newtonian spaces based on the Lorentz $L^{p,q}$ spaces, assuming that $\Pcal$ carries an $L^{p,q}$-Poincar\'e inequality. They managed to prove the density for $1 \le q \le p<\infty$ using the fact that a Lorentz-type maximal operator is bounded from $L^{p,q}$ to $L^{p, \infty}$. They also found a counterexample for $1<p<q=\infty$. The case when $1\le p<q<\infty$ was however left open. Similar results were obtained earlier by Podbrdsk\'{y}~\cite{Pod} considering a more general setting of Banach space valued Lorentz functions, where the case $1\le p<q<\infty$ was not solved either.

It is known that Poincar\'e inequality is not a necessary condition to obtain the desired density. Using tools from optimal transportation theory, Ambrosio, Gigli and Savar\'e~\cite{AmbGigSav} argued that Lipschitz functions are dense in $N^{1,p}(\Pcal)$ for $p\in (1, \infty)$ if $\Pcal$ is compact and endowed with a doubling metric. Norm convergence of the sequence of approximating Lipschitz functions follows from reflexivity of $N^{1,p}(\Pcal)$, which was in that setting shown by Ambrosio, Colombo and Di~Marino~\cite{AmbColDiM}.

The current paper studies the question of density in situations when the base function space is a quasi-Banach function lattice with absolutely continuous quasi-norm. First, we provide a general theorem, where  Newtonian and Haj\l asz's theory of Sobolev-type spaces on metric measure spaces are intertwined. There,  we do not need to assume that $\Pcal$ carries any Poincar\'e inequality and the measure need not be doubling. The Haj\l asz gradient is however required to satisfy a weak type norm estimate. The connection between (weak) upper and Haj\l asz gradients is then established via the fractional sharp maximal operator and a $p$-Poincar\'e inequality. This leads to the assumption that a maximal operator of Hardy--Littlewood type (corresponding to the right-hand side of the $p$-Poincar\'e inequality supported by $\Pcal$) is weakly bounded on the function lattice. In particular, the open case in Lorentz--Newtonian spaces is settled with an affirmative answer. The presented results also extend the theory of Lipschitz truncations in variable exponent Newtonian spaces by Harjulehto, H\"ast\"o and Pere~\cite{HarHasPer} since we allow the infimum of the exponent to~be~$1$.

To determine whether Newtonian functions may be approximated by bounded functions is one of the steps towards the desired results. We will see that the absolute continuity of the function norm on sets of finite measure plays a vital role, which will help us with construction of examples where bounded functions are not dense in the quasi-Banach function lattice, whence neither are (locally) Lipschitz continuous functions in the corresponding Newtonian space.

One of the aims of the paper is to provide rather general theorems on the density of Lipschitz functions in Newtonian spaces with tangible hypotheses. Therefore, we also study when the suitable maximal operators are weakly bounded on sets of finite measure. We are particularly interested in their boundedness on rearrangement-invariant spaces and in its characterization in terms of the properties of the fundamental function.

We will prove that Lipschitz functions are dense in every Newtonian space based on a rearrangement-invariant space with absolutely continuous norm provided that $\Pcal$ supports a $1$-Poincar\'e inequality. If $\Pcal$ carries merely a $p$-Poincar\'e inequality with $p>1$, then it suffices, besides absolute continuity of the norm, that the upper fundamental (Zippin) or the upper Boyd index is less than $1/p$. Moreover, if $\Pcal$ is complete, then the indices may be equal to $1/p$. More generally, one can instead assume that $t \mapsto \phi(t)^p \fint_0^t \phi(s)^{-p}\,ds$ is bounded in a small neighborhood of $0$, where $\phi$ is the fundamental function of $X$.

If the Newtonian space is trivial, i.e., equal to the base function lattice, then the situation is much simpler. Regardless of the doubling condition of $\mu$, we give a general characterization of this triviality in terms of properties of the Sobolev capacity and of the $X$-modulus of a family of curves. In particular, we will see that the Newtonian space coincides with the base function lattice as sets if and only if their quasi-norms are equal. Such a characterization seems to be new even in the setting of the well-studied spaces $N^{1,p}$ that are built upon $L^p$. If a trivial Newtonian space is based on a Banach function space, then Lipschitz functions are dense whenever the norm is absolutely continuous.

The structure of the paper is the following. Section~\ref{sec:prelim} provides an overview of the used notation and preliminaries in the area of quasi-Banach function lattices and Newtonian spaces. Moreover, the characterization of triviality of a Newtonian space is given here. In Section~\ref{sec:trunc}, we study the density of truncated functions. Then, we obtain the general form of the main theorem about density of Lipschitz functions in Newtonian spaces in Section~\ref{sec:LipDensGeneral}, using the connection between Haj\l asz gradients, fractional sharp maximal operators and (weak) upper gradients. Rearrangement-invariant spaces lie in the focus of Section~\ref{sec:rispaces}. There, we also present a certain type of function spaces that will serve as counterexamples, where Newtonian functions cannot be approximated by Lipschitz functions. In Section~\ref{sec:weaktype}, we study maximal operators, with particular attention aimed at the weak boundedness in the setting of rearrangement-invariant spaces. Finally, Section~\ref{sec:LipDensSpec} contains various concretizations of the main result of the paper, giving  sufficient conditions for Lipschitz functions to be dense in the Newtonian space.
}
%
%
%
%
\section{Preliminaries}
\label{sec:prelim}
We assume throughout the paper that $\Pcal = (\Pcal, \dd, \mu)$ is a metric measure space equipped with a metric $\dd$ and a $\sigma$-finite Borel regular measure $\mu$ such that every ball in $\Pcal$ has finite positive measure. In our context, Borel regularity means that all Borel sets in $\Pcal$ are $\mu$-measurable and for each $\mu$-measurable set $A$ there is a Borel set $D\supset A$ such that $\meas{D} = \meas{A}$. Since $\mu$ is Borel regular and $\Pcal$ can be decomposed into countably many (possibly overlapping) open sets of finite measure, it is outer regular, see Mattila~\cite[Theorem 1.10]{Mat}.

The open ball centered at $x\in \Pcal$ with radius $r>0$ will be denoted by $B(x,r)$. Given a ball $B=B(x,r)$ and a scalar $\lambda > 0$, we let $\lambda B = B(x,\lambda r)$. We say that $\mu$ is a \emph{doubling} measure, if there is a constant $c_{\dbl}\ge1$ such that $\meas{2B} \le c_{\dbl} \meas{B}$ for every ball $B$. In Sections~\ref{sec:prelim} and~\ref{sec:trunc}, unlike in the rest of the paper, we will not assume that $\mu$ is doubling or non-atomic.

Let $\Mcal(\Pcal, \mu)$ denote the set of all extended real-valued $\mu$-measurable functions on $\Pcal$. The set of extended real numbers, $\Rbb \cup \{\pm \infty\}$, will be denoted by $\overline{\Rbb}$. We will also use $\Rbb^+$, which denotes the set of positive real numbers, i.e., the interval $(0, \infty)$. The symbol $\Nbb$ will denote the set of positive integers, i.e., $\{1,2, \ldots\}$. We define the \emph{integral mean} of a measurable function $u$ over a set $E$ of finite positive measure as
\[
  u_E \coloneq \fint_E u\,d\mu = \frac{1}{\mu(E)} \int_E u\,d\mu,
\]
whenever the integral on the right-hand side exists, not necessarily finite though. The characteristic function of a set $E$ will be denoted by $\chi_E$. Given an extended real-valued function $u: \Pcal \to \overline{\Rbb}$ and a real number $\sigma\ge 0$, we define $\suplev{u}{\sigma}$ as the superlevel set $\{x\in \Pcal: |u(x)| > \sigma\}$.
The notation $L \lesssim R$ will be used to express that there exists a constant $c>0$, perhaps dependent on other constants within the context, such that $ L \le cR$. If $L \lesssim R$ and simultaneously $R \lesssim L$, then we will simply write $L \approx R$ and say that the quantities $L$ and $R$ are \emph{comparable}. The words \emph{increasing} and \emph{decreasing} will be used in their non-strict sense.

A linear space $X = X(\Pcal, \mu)$ of equivalence classes of functions in $\Mcal(\Pcal, \mu)$ is said to be a \emph{quasi-Banach function lattice} over $(\Pcal, \mu)$ equipped with the quasi-norm $\|\cdot\|_X$ if the following axioms hold:
\begin{enumerate}
  \renewcommand{\theenumi}{(P\arabic{enumi})}
  \setcounter{enumi}{-1}
  \item \label{df:qBFL.initial} $\|\cdot\|_X$ determines the set $X$, i.e., $X = \{u\in \Mcal(\Pcal, \mu)\colon \|u\|_X < \infty\}$;
  \item \label{df:qBFL.quasinorm} $\|\cdot\|_X$ is a \emph{quasi-norm}, i.e., 
  \begin{itemize}
    \item $\|u\|_X = 0$ if and only if $u=0$ a.e.,
    \item $\|au\|_X = |a|\,\|u\|_X$ for every $a\in\Rbb$ and $u\in\Mcal(\Pcal, \mu)$,
    \item there is a constant $\cconc \ge 1$, the so-called \emph{modulus of concavity}, such that $\|u+v\|_X \le \cconc(\|u\|_X+\|v\|_X)$ for all $u,v \in \Mcal(\Pcal, \mu)$;
  \end{itemize}
  \item \label{df:BFL.latticeprop} $\|\cdot\|_X$ satisfies the \emph{lattice property}, i.e., if $|u|\le|v|$ a.e., then $\|u\|_X\le\|v\|_X$;
  \renewcommand{\theenumi}{(RF)}
  \item \label{df:qBFL.RF} $\|\cdot\|_X$ satisfies the \emph{Riesz--Fischer property}, i.e., if $u_n\ge 0$ a.e.\@ for all $n\in\Nbb$, then $\bigl\|\sum_{n=1}^\infty u_n \bigr\|_X \le \sum_{n=1}^\infty \cconc^n \|u_n\|_X$, where $\cconc\ge 1$ is the modulus of concavity. Note that the function $\sum_{n=1}^\infty u_n$ needs to be understood as a pointwise (a.e.\@) sum.
\end{enumerate}
Observe that $X$ contains only functions that are finite a.e., which follows from~\ref{df:qBFL.initial}--\ref{df:BFL.latticeprop}. In other words, if $\|u\|_X<\infty$, then $|u|<\infty$ a.e.

Throughout the paper, we will also assume that the quasi-norm $\|\cdot\|_X$ is \emph{continuous}, i.e., if $\|u_n - u\|_X \to 0$ as $n\to\infty$, then $\|u_n\|_X \to \|u\|_X$. We do not lose any generality by this assumption as the Aoki--Rolewicz theorem (see Benyamini and Lindenstrauss~\cite[Proposition H.2]{BenLin} or Maligranda~\cite[Theorem~1.2]{Mali}) implies that there is always an equivalent quasi-norm that is an $r$-norm, i.e., it satisfies
\[
  \|u + v\|^r \le \|u\|^r + \|v\|^r,
\]
where $r = 1/(1+ \log_2 \cconc) \in (0, 1]$, which implies the continuity. The theorem's proof shows that such an equivalent quasi-norm retains the lattice property~\ref{df:BFL.latticeprop}.

It is worth noting that the Riesz--Fischer property is actually equivalent to the completeness of the quasi-normed space $X$, given that the conditions \ref{df:qBFL.initial}--\ref{df:BFL.latticeprop} are satisfied and that the quasi-norm is continuous, see Maligranda~\cite[Theorem 1.1]{Mali}.

If $\cconc = 1$, then the functional $\| \cdot \|_X$ is a norm. We then drop the prefix \emph{quasi} and hence call $X$ a \emph{Banach function lattice}.

A (quasi)Banach function lattice $X = X(\Pcal, \mu)$ is called a \emph{(quasi)Banach function space} over $(\Pcal, \mu)$ if the following axioms are satisfied as well:
\begin{enumerate}
  \renewcommand{\theenumi}{(P\arabic{enumi})}
  \setcounter{enumi}{2}
  \item $\|\cdot\|_X$ satisfies the \emph{Fatou property}, i.e., if $0\le u_n \upto u$ a.e., then $\|u_n\|_X\upto\|u\|_X$;
  \item \label{df:BFS.finmeasfinnorm} if a measurable set $E \subset \Pcal$ has finite measure, then $\|\chi_E\|_X < \infty$;
  {
  \item for every measurable set $E\subset \Pcal$ with $\meas{E}<\infty$ there is $C_E>0$ such that $\int_E |u|\,d\mu \le C_E \|u\|_X$ for every measurable function $u$.
    \label{df:BFL.locL1}
  } 
\end{enumerate}
Note that the Fatou property implies the Riesz--Fischer property. Axiom~\ref{df:BFS.finmeasfinnorm} is equivalent to the condition that $X$ contains all simple functions (with support of finite measure). Due to the lattice property~\ref{df:BFL.latticeprop}, we can equivalently characterize~\ref{df:BFS.finmeasfinnorm} as embedding of $L^\infty(\Pcal, \mu)$ into $X$ on sets of finite measure. Finally, condition~\ref{df:BFL.locL1} describes that $X$ is embedded into $L^1(\Pcal, \mu)$ on sets of finite measure.

In the further text, we will slightly deviate from this rather usual definition of (quasi)\allowbreak{}Banach function lattices and spaces. Namely, we will consider $X$ to be a linear space of functions defined everywhere instead of equivalence classes defined a.e. Then, the functional $\|\cdot\|_X$ is really only a (quasi)seminorm. Unless explicitly stated otherwise, we will always assume that $X$ is a quasi-Banach function lattice.

The quasi-norm $\| \cdot \|_X$ in a quasi-Banach function lattice $X$ is \emph{absolutely continuous} if every $u \in X$ satisfies the condition
\begin{enumerate}
  \renewcommand{\theenumi}{(AC)}
  \item \label{df:AC}
  $\| u \chi_{E_n} \|_X \to 0$ as $n\to\infty$ whenever $\{E_n\}_{n=1}^\infty$ is a decreasing sequence of measurable sets with $\meas{\bigcap_{n=1}^\infty E_n} = 0$.
\end{enumerate}
It follows from the dominated convergence theorem that the $L^p$ norm is absolutely continuous for $p\in (0, \infty)$. On the other hand, $L^\infty$ lacks this property apart from in a few exceptional cases. For example, if $\mu$ is atomic, $0<\delta\le \meas{A}$ for every atom $A\subset \Pcal$, and $\meas{\Pcal}<\infty$, then every quasi-Banach function lattice has absolutely continuous quasi-norm since the condition $\meas{\bigcap_{n=1}^\infty E_n} = 0$ implies that there is $n_0 \in \Nbb$ such that $E_n=\emptyset$ for all $n\ge n_0$. However, atomic measures lie outside of the main scope of our interest.
%
%
\begin{df}
For $p \in [1, \infty)$, the \emph{non-centered maximal operator} $M_p$ is defined by
\[
  M_p u(x) = \sup_{B\ni x} \biggl(\fint_B |u|^p\,d\nu\biggr)^{1/p}, \quad x\in \Rcal,
\]
where $(\Rcal, \nu)$ is a given metric measure space and $u \in \Mcal(\Rcal, \nu)$.
We also define the superlevel set
\[
  \suplevp{p}{u}{\sigma} \coloneq \suplev{M_pu}{\sigma} = \{ x\in \Rcal: M_p u(x) > \sigma \} \quad \mbox{for }\sigma \ge 0.
\]
\end{df}
%
%
\begin{rem}
We will use either $(\Rcal, \nu) = (\Rbb^+, \lambda^1)$ or $(\Rcal, \nu) = (\Pcal, \mu)$ depending on the context, yet without any explicit indication of which of the cases applies at the moment. Since $M_p u= (M_1 |u|^p)^{1/p}$, we obtain that $M_p: L^p \to L^{p,\infty}$ is bounded due to the weak-$L^1$ boundedness of $M_1$ on doubling spaces (see e.g.\@ Coifman and Weiss~\cite[Theorem III.2.1]{CoiWei}). Obviously, $M_p: L^\infty \to L^\infty$ is also bounded.
\end{rem}
%
%
Given a function lattice $X$, we also define a ``local'' space $X_\fm$ that consists of measurable functions whose restrictions to sets of finite measure belong to $X$, i.e., $u\in X_\fm$ if $u \chi_E \in X$ for every measurable set $E$ with $\meas{E}<\infty$. If $\meas{\Pcal} < \infty$, then obviously $X_\fm = X$.
We say that a (sub)linear mapping $T: X(\Pcal, \mu) \to Y_\fm(\Rcal, \nu)$ is bounded, if for every $E\subset \Rcal$ with $\nu(E)<\infty$ there is $c_E>0$ such that $\|(Tu) \chi_E\|_Y \le c_E \|u\|_X$ whenever $u\in X$. It might actually happen that $Tu \notin Y$ even though $u\in X$. If $\nu(\Rcal)<\infty$, then $T: X \to Y_\fm$ is bounded if and only if $T: X\to Y$ is bounded. We will also say that $X$ is \emph{continuously embedded} in $Y_\fm$, which will be denoted by $X \emb Y_\fm$, if the identity mapping $\Id: X(\Pcal, \mu) \to Y_\fm(\Pcal, \mu)$ is bounded.

By a \emph{curve} in $\Pcal$ we will mean a non-constant continuous mapping $\gamma: I\to \Pcal$ with finite total variation (i.e., length of $\gamma(I)$), where $I \subset \Rbb$ is a compact interval. Thus, a curve can be (and we will always assume that all curves are) parametrized by arc length $ds$, see e.g.\@ Heinonen~\cite[Section~7.1]{Hei}. Note that every curve is Lipschitz continuous with respect to its arc length parametrization. The family of all non-constant rectifiable curves in $\Pcal$ will be denoted by $\Gamma(\Pcal)$. By abuse of notation, the image of a curve $\gamma$ will also be denoted by $\gamma$. 

A statement holds for \emph{$\Mod_X$-a.e.\@} curve $\gamma$ if the family of exceptional curves $\Gamma_e$, for which the statement fails, has \emph{zero $X$-modulus}, i.e., if there is a Borel function $\rho \in X$ such that $\int_\gamma \rho\,ds = \infty$ for every curve $\gamma \in \Gamma_e$ (see Mal\'{y}~\cite[Proposition 4.8]{Mal1}).
%
%
\begin{df}
\label{df:ug}
  Let $u: \Pcal \to \overline{\Rbb}$. Then, a Borel function $g: \Pcal \to [0, \infty]$ is an \emph{upper gradient} of $u$ if
\begin{equation}
 \label{eq:ug_def}
 |u(\gamma(0)) - u(\gamma(l_\gamma))| \le \int_\gamma g\,ds
\end{equation}
for every curve $\gamma: [0, l_\gamma]\to\Pcal$. To make the notation easier, we are using the convention that $|(\pm\infty)-(\pm\infty)|=\infty$. If we allow $g$ to be a measurable function and \eqref{eq:ug_def} to hold only for $\Mod_X$-a.e.\@ curve $\gamma: [0, l_\gamma]\to\Pcal$, then $g$ is an \emph{$X$-weak upper gradient}.
\end{df}
%
%
Observe that the ($X$-weak) upper gradients are by no means given uniquely. Indeed, if we have a function $u$ with an ($X$-weak) upper gradient $g$, then $g+h$ is another ($X$-weak) upper gradient of $u$ whenever $h\ge0$ is a Borel (measurable) function.
%
%
\begin{df}
Whenever $u\in \Mcal(\Pcal, \mu)$, let
\begin{equation}
  \label{eq:def-N1X-norm}
  \|u\|_{\NX} = \| u \|_X + \inf_g \|g\|_X,
\end{equation}
where the infimum is taken over all upper gradients $g$ of $u$. The \emph{Newtonian space} based on $X$ is the space
\[
  \NX = \NX (\Pcal, \mu)= \{u\in\Mcal(\Pcal, \mu): \|u\|_{\NX} <\infty \}.
\]
\end{df}
Let us point out that we assume that functions are defined everywhere, and not just up to equivalence classes $\mu$-almost everywhere. This is essential for the notion of upper gradients since they are defined by a pointwise inequality.

The functional $\| \cdot \|_\NX$ is a quasi-seminorm on $\NX$ and its modulus of concavity equals the modulus $\cconc$ of the base function space $X$. We may very well take the infimum over all $X$-weak upper gradients $g$ of $u$ in \eqref{eq:def-N1X-norm} without changing the value of the Newtonian quasi-seminorm. Moreover, $\NX$ is complete (see~\cite[Theorem~7.1]{Mal1}).

The \emph{(Sobolev) capacity}, defined as $C_X(E) = \inf\{\|u\|_\NX: u\ge \chi_E\}$ for $E\subset \Pcal$, is a set function that distinguishes which sets do not carry any information about a Newtonian function and thus are negligible. The natural equivalence classes in $\NX$ are given by equality outside of sets of zero capacity. These as well as other basic properties of Newtonian functions have been established in~\cite{Mal1}.

It has been shown in~\cite{Mal2} that the infimum in \eqref{eq:def-N1X-norm} is attained for functions in $\NX$ by a \emph{minimal $X$-weak upper gradient}. Such an $X$-weak upper gradient is minimal both normwise and pointwise (a.e.\@) among all ($X$-weak) upper gradients in $X$, whence it is given uniquely up to equality a.e.

The following lemma provides us with several equivalent conditions that describe triviality of the Newtonian space in the sense that $\NX = X$. Such a characterization seems to be new even for the well-studied spaces $N^{1,p}\coloneq N^1L^p$.
%
%
\begin{lem}
\label{lem:NX=X}
The following are equivalent:
\begin{enumerate}
	\item \label{it:NX=X-sets} $\NX = X$ as sets of functions;
	\item \label{it:NX=X-cap} $C_X(E) = 0$ if and only if $\mu(E) = 0$, where $E\subset \Pcal$;
	\item \label{it:NX=X-mod} $\Mod_X(\Gamma(\Pcal)) = 0$;
	\item \label{it:NX=X-norms} $\|u\|_\NX = \|u\|_X$ for every $u\in\Mcal(\Pcal, \mu)$.
\end{enumerate}
\end{lem}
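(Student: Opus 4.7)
My plan is to cycle through $(c) \Rightarrow (d) \Rightarrow (a) \Rightarrow (b) \Rightarrow (c)$, with only the closing step demanding real effort.

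The step $(c) \Rightarrow (d)$ is essentially definitional: assuming $\Mod_X(\Gamma(\Pcal)) = 0$, every assertion about curves holds vacuously for $\Mod_X$-a.e.\ $\gamma$, so the identically-zero Borel function is an $X$-weak upper gradient of every measurable $u$, giving $\|u\|_\NX = \|u\|_X + 0 = \|u\|_X$. The step $(d) \Rightarrow (a)$ is purely formal, since both sets are defined by finiteness of the respective quasi-norm. For $(a) \Rightarrow (b)$, the direction $C_X(E) = 0 \Rightarrow \meas{E} = 0$ is unconditional: the lattice property applied to dominating sequences $u_n \ge \chi_E$ with $\|u_n\|_\NX \to 0$ forces $\|\chi_E\|_X \le \|u_n\|_X \to 0$. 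The converse invokes the open mapping theorem for quasi-Banach spaces: under $(a)$, the continuous inclusion $\NX \emb X$ becomes a bijection between two complete quasi-normed spaces with continuous quasi-norms (by Riesz--Fischer and Aoki--Rolewicz), so $\|u\|_\NX \le K\|u\|_X$ for some $K$, and then $\meas{E} = 0$ forces $C_X(E) \le \|\chi_E\|_\NX \le K\|\chi_E\|_X = 0$.

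The heart of the argument is $(b) \Rightarrow (c)$. From $(b)$, any $\mu$-null set $N$ has $C_X(N) = 0$, so $\chi_N$ agrees with $0$ outside a capacity-zero set and is identified with $0$ in $\NX$; hence $\|\chi_N\|_\NX = 0$, and $\chi_N$ admits a minimal $X$-weak upper gradient that vanishes $\mu$-a.e.\ in $\Pcal$. A standard consequence of the $X$-modulus theory---that any measurable $g$ with $g = 0$ $\mu$-a.e.\ satisfies $\int_\gamma g\,ds = 0$ for $\Mod_X$-a.e.\ $\gamma$, witnessed by $\infty\cdot\chi_{\{g \ne 0\}} \in X$ (which has zero quasi-norm and is infinite along every curve meeting $\{g \ne 0\}$ on positive arclength)---turns the weak upper gradient inequality into $\chi_N(\gamma(0)) = \chi_N(\gamma(l_\gamma))$ for $\Mod_X$-a.e.\ $\gamma$. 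Feeding in a countable separating family $\{N_k\}$ of Borel null sets, I conclude that $\Mod_X$-a.e.\ $\gamma$ satisfies $\gamma(0) = \gamma(l_\gamma)$. Loops are dispatched by restriction: any non-constant loop $\gamma$ contains a non-loop subcurve $\gamma|_{[a,b]}$, which lies in the exceptional family just constructed, so the same witness $\rho \in X$ bounds $\int_\gamma \rho \ge \int_{\gamma|_{[a,b]}} \rho = \infty$, yielding $\Mod_X(\Gamma(\Pcal)) = 0$.

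The main obstacle I anticipate is justifying the existence of a countable separating family of Borel null sets under only the preliminary hypotheses on $(\Pcal, \mu)$: non-atomicity is not assumed in this section, and purely atomic measures would defeat such a family. The standing requirement that every ball has finite positive measure should rule out the extreme pathologies, but the argument may need to be rerouted---for instance via a direct $(a) \Rightarrow (c)$ that, using the open-mapping bound, sums weak upper gradients of $\chi_{N_k}$ with geometrically decaying norms via Riesz--Fischer to build a single admissible $\rho$ directly.
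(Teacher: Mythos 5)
Your cycle $(c)\Rightarrow(d)\Rightarrow(a)\Rightarrow(b)\Rightarrow(c)$ is a legitimate rearrangement, and the first two implications match the paper's treatment. But both of the remaining arcs have real gaps, and the one you flag yourself is the deeper of the two.

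For $(a)\Rightarrow(b)$: the open mapping theorem does not give $\|u\|_\NX \le K\|u\|_X$. Both $\|\cdot\|_\NX$ and $\|\cdot\|_X$ are only quasi-\emph{semi}norms on pointwise-defined functions, and to make the OMT applicable one must pass to the quotients $\NX/\{u:\|u\|_\NX=0\}$ and $X/\{u:\|u\|_X=0\}$. These quotients are not a priori the same: the first identifies functions agreeing outside capacity-null sets, the second functions agreeing $\mu$-a.e. Since $\|u\|_X\le\|u\|_\NX$ always, the induced map $\widetilde\NX\to\widetilde X$ is well defined, continuous and (under (a)) surjective; but it is injective only if every $\mu$-null set is capacity-null, which is exactly the forward half of (b). So the OMT only yields openness, not the two-sided bound, and the argument is circular. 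The paper avoids this entirely: if $\mu(E)=0$ then $\|\infty\chi_E\|_X=0$, hence $\infty\chi_E\in X=\NX$, and by the standard fact that Newtonian functions are finite outside a set of zero capacity (\cite[Prop.~3.6]{Mal1}), $C_X(E)=C_X(\{|\infty\chi_E|=\infty\})=0$.

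For $(b)\Rightarrow(c)$: a countable family of $\mu$-null Borel sets $\{N_k\}$ can never separate points in the way your argument needs, because $\bigcup_k N_k$ is still $\mu$-null and every pair of points in its (conull) complement has $\chi_{N_k}(x)=\chi_{N_k}(y)=0$ for all $k$. You correctly sensed this obstacle, and the alternative route you sketch (a direct $(a)\Rightarrow(c)$ via OMT and Riesz--Fischer) still presupposes the same separating family, so it does not resolve it. What is actually needed is a geometric construction that forces every non-constant curve to meet some null set: the paper takes a countable dense set $\{x_n\}$, and for each $n$ a dense set of radii $r_{n,k}$ for which the spheres $S(x_n,r_{n,k})$ are null (only countably many spheres around a fixed center can have positive measure). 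Setting $E_n=\bigcup_k S(x_n,r_{n,k})$, the intermediate value theorem shows that any curve whose distance to $x_n$ is non-constant must hit $E_n$, while a curve whose distance to every $x_n$ is constant must itself be constant (test against $y=\gamma(0)$). Hence $\Gamma(\Pcal)=\bigcup_n\Gamma_n\subset\bigcup_n\Gamma_{E_n}$, and $\Mod_X(\Gamma_{E_n})=0$ follows from $C_X(E_n)=0$ via \cite[Prop.~5.10]{Mal1}. This sphere construction is the key idea, and it is missing from your proposal.
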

\begin{proof}
\ref{it:NX=X-sets} $\Rightarrow$~\ref{it:NX=X-cap} Let $E\subset \Pcal$ satisfy $\mu(E) = 0$. Then, $u = \infty \chi_E$ belongs to $X$ since $\|u\|_X = \|0\|_X = 0$. Hence, $u\in \NX$. Thus, $C_X(E) = C_X(\{x\in \Pcal: |u(x)|=\infty\}) = 0$ by~\cite[Proposition 3.6]{Mal1}.

\ref{it:NX=X-cap} $\Rightarrow$~\ref{it:NX=X-mod} Let $\{x_n \in \Pcal: n\in\Nbb\}$ be a dense subset of $\Pcal$. For each $n\in\Nbb$, we can find a set of radii $\{r_{n,k}>0: k\in\Nbb\}$, dense in $(0, \infty)$, such that the spheres $S(x_n, r_{n,k}) = \{z\in \Pcal: \dd(x_n, z) = r_{n,k}\}$ satisfy $\meas{S(x_n, r_{n,k})} = 0$ for every $k\in\Nbb$. Such a sequence indeed exists since at most countably many spheres centered at $x_n$ have positive measure as balls would not have finite measure otherwise.

Let $E_n = \bigcup_{k=1}^\infty S_{n,k}$. Then, $\meas{E_n} = 0 = C_X(E_n)$. Therefore, $\Mod_X(\Gamma_{E_n}) = 0$ by~\cite[Proposition~5.10]{Mal1}, where $\Gamma_{E_n} = \{\gamma\in\Gamma(\Pcal): \gamma^{-1}(E_n) \neq \emptyset\}$. Let
\[ 
  \Gamma_n = \{ \gamma \in \Gamma(\Pcal): \dd(x_n, \gamma(t_1)) \neq \dd(x_n, \gamma(t_2))\mbox{ for some }0\le t_1 < t_2 \le l_\gamma \}.
\]
Then, $\Gamma_n \subset \Gamma_{E_n}$ and hence $\Mod_X(\Gamma_n) \le \Mod_X(\Gamma_{E_n}) = 0$. As there are no (non-constant) curves that have a constant distance from all points $x_n$, $n\in\Nbb$, we obtain that
\[
  \Mod_X(\Gamma(\Pcal)) = \Mod_X \biggl(\bigcup_{n=1}^\infty \Gamma_n\biggr) = 0. 
\]

\ref{it:NX=X-mod} $\Rightarrow$~\ref{it:NX=X-norms} Since \eqref{eq:ug_def} is allowed to fail for  every curve $\gamma \in \Gamma(\Pcal)$, $g \equiv 0$ is an \mbox{$X$-}weak upper gradient of every measurable function $u\in\Mcal(\Pcal, \mu)$, whence $\|u\|_X \le \|u\|_\NX \le \|u\|_X + \|0\|_X = \|u\|_X$.

\ref{it:NX=X-norms} $\Rightarrow$~\ref{it:NX=X-sets} If the quasi-norms are equal, then $X = \{ u\in \Mcal(\Pcal, \mu): \|u\|_X < \infty\} = \{ u\in X: \|u\|_\NX < \infty\} = \NX$.
\end{proof}
%
%
In the next proposition, we demonstrate that the density of Lipschitz functions relies only on the properties of $X$ whenever the Newtonian space is trivial.
%
%
\begin{pro}
\label{pro:NX=X-dens}
Let $X$ be a Banach function space with absolutely continuous norm, i.e., it satisfies~\ref{df:qBFL.initial}--\ref{df:BFL.locL1} and~\ref{df:AC}. Suppose that $\NX = X$. Then, Lipschitz functions are dense in $\NX$.
\end{pro}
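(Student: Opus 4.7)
The plan is to invoke Lemma~\ref{lem:NX=X} to strip away the upper-gradient contribution from the Newtonian norm. Since $\NX = X$ by hypothesis, the equivalence of \ref{it:NX=X-sets} and \ref{it:NX=X-norms} gives $\|u\|_\NX = \|u\|_X$ for every $u \in \Mcal(\Pcal, \mu)$. Thus the proposition reduces to showing that Lipschitz functions are dense in $X$ with respect to the norm $\|\cdot\|_X$ alone; no curves or upper gradients need to be controlled.

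Density in $X$ will then be obtained by three successive reductions, each powered by \ref{df:AC}. First, fix $x_0 \in \Pcal$ and, for $u \in X$, set $F_n = \{|u| \leq n\} \cap B(x_0,n)$. The sets $\Pcal \setminus F_n$ decrease to a subset of $\{|u| = \infty\}$, which is null because $u \in X$ is finite a.e.; hence \ref{df:AC} applied to $u$ gives $\|u - u\chi_{F_n}\|_X \to 0$, reducing matters to bounded functions supported on a set of finite measure. Such a function is uniformly approximable by simple functions supported on $F_n$, and since $\chi_{F_n} \in X$ by \ref{df:BFS.finmeasfinnorm}, the lattice property \ref{df:BFL.latticeprop} promotes uniform convergence on $F_n$ into $X$-norm convergence. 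So it now suffices to approximate $\chi_E$ with $\mu(E) < \infty$ by Lipschitz functions. By outer regularity of $\mu$ (noted in Section~\ref{sec:prelim}), there is a decreasing sequence of open sets $U_k \supset E$ with $\mu(U_k \setminus E) \to 0$; the nested sets $U_k \setminus E$ have null intersection and are dominated by $\chi_{U_1 \setminus E} \in X$, so \ref{df:AC} yields $\|\chi_{U_k} - \chi_E\|_X \to 0$. For each open $U$ of finite measure, the truncation
\[
\phi_k(x) = \min\bigl\{1,\, k\dist(x, \Pcal \setminus U)\bigr\}
\]
is $k$-Lipschitz on $\Pcal$, vanishes on $\Pcal \setminus U$, and equals $1$ on $F_k \coloneq \{x \in \Pcal : \dist(x, \Pcal \setminus U) \geq 1/k\}$; openness of $U$ forces $F_k \uparrow U$, hence $U \setminus F_k \downarrow \emptyset$, and one final invocation of \ref{df:AC} provides $\|\chi_U - \phi_k\|_X \to 0$. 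A diagonal argument assembles the three approximations into a Lipschitz sequence converging to any prescribed $u \in X$.

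There is no genuine obstacle, because Lemma~\ref{lem:NX=X} has already removed the upper-gradient dimension and the argument lives entirely inside $X$. The role of absolute continuity is precisely to convert measure-smallness (delivered by integrability of $u$, outer regularity, and the openness of $U$) into $X$-norm smallness; without \ref{df:AC}---as already illustrated by $L^\infty$ in the remarks preceding the proposition---this bridge fails. The edge case $U = \Pcal$, should it arise, is handled trivially by $\phi_k \equiv 1$.
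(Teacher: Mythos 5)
Your proof is correct, and it reaches the same conclusion via the same high-level strategy (Lemma~\ref{lem:NX=X} removes the upper gradient, then one approximates simple functions by Lipschitz bumps built from $\dist(\cdot, \cdot)$), but the approximation of a characteristic function is set up in the opposite direction from the paper's. The paper shrinks the measurable set $E$ of finite measure from the \emph{inside}: it constructs a bounded closed $F \subset E$ with $\meas{E \setminus F}$ small (using a ball and outer regularity of the complement inside that ball), and then uses the inner bump $\eta_k(x) = (1 - k\dist(x,F))^+$, which has bounded support and increases to $\chi_F$. You instead enlarge $E$ from the \emph{outside} via outer regularity to a decreasing sequence of open sets $U_k$ of finite measure, and use the outer bump $\phi_k(x) = \min\{1, k\dist(x, \Pcal\setminus U)\}$, which vanishes off $U$ and increases to $\chi_U$; absolute continuity then closes both gaps. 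Your route is a touch more direct from outer regularity and avoids the auxiliary ball construction, at the modest cost of an extra layer ($\chi_E \to \chi_U \to \phi_k$) and the need to check $\mu(U_1 \setminus E) < \infty$ (which holds since $\mu(E) < \infty$ and outer regularity lets you take $\mu(U_1)$ close to $\mu(E)$, but is worth saying). The paper's inner construction guarantees the approximants have bounded support, a slightly stronger property than finite-measure support, though both suffice to place the bumps in $X$ via~\ref{df:BFS.finmeasfinnorm}. You also rederive the density of simple functions by truncating in range and space and using uniform approximation plus the lattice property, whereas the paper simply cites~\cite[Theorem~I.3.11]{BenSha}; your version is self-contained and equally valid.
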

%
%
\begin{proof}
Simple functions are dense in $X$ by~\cite[Theorem~I.3.11]{BenSha}.
Let $E\subset \Pcal$ be a measurable set of finite measure and $\eps>0$ be arbitrary. Then, there exists a bounded set $E_b \subset E$ such that $\meas{E\setminus E_b} < \eps$. Let $\itoverline{B}\subset \Pcal$ be a closed ball that contains $E_b$. By outer regularity of $\mu$, there is an open set $G \supset \itoverline{B} \setminus E_b$ such that $\meas{G \cap E_b} \le \meas{G \setminus (\itoverline{B} \setminus E_b)} < \eps$. Let $F = \itoverline{B} \setminus G$. Then, $F$ is closed in $\itoverline{B}$ and hence in $\Pcal$, and $\meas{E_b \setminus F} = \meas{E_b \cap G}< \eps$. Thus, $\meas{E \setminus F} < 2\eps$.

Therefore, for every measurable $E \subset \Pcal$ of finite measure, there is a bounded closed set $F \subset E$ such that $\|\chi_{E\setminus F}\|_X$ is arbitrarily small by the absolute continuity of the norm. For such a set $F$, we define $\eta_k(x) = (1- k \dist(x, F))^+$, $x\in \Pcal$, $k\in\Nbb$. Then, $\eta_k$ has bounded support, whence $\eta_k \in X$. The function $\eta_k$ is $k$-Lipschitz, and $\eta_k \to \chi_F$ a.e.\@ in $\Pcal$ as $k\to\infty$. By the dominated convergence theorem (which follows from the absolute continuity, see~\cite[Proposition I.3.6]{BenSha}), we obtain $\eta_k \to \chi_F$ in $X$ as $k\to\infty$. Therefore, every simple function can be approximated in the norm of $X$ by Lipschitz functions.

Consequently, every $u\in X=\NX$ can be approximated in the norm of $\NX$ by Lipschitz functions since the norms of $X$ and $\NX$ are equal by Lemma~\ref{lem:NX=X}.
\end{proof}
%
%
\begin{df}
\label{df:pPI}
We say that $\Pcal$ supports a \emph{$p$-Poincar\'{e} inequality} or, for the sake of brevity, that $\Pcal$ is a \emph{$p$-Poincar\'{e} space} if there exist constants $c_{\PI} > 0$ and $\lambda \ge 1$ such that for all balls $B\subset \Pcal$, for all $u\in L^1_\loc(\Pcal)$ and all upper gradients $g$ of $u$,
\begin{equation}
  \label{eq:pPI}
  \fint_B |u-u_B|\,d\mu \le c_{\PI} \diam (B) \Biggl( \fint_{\lambda B} g^p\,d\mu\Biggr)^{1/p},
\end{equation}
where $u_B = \fint_B u\,d\mu$.
\end{df}
This form of the inequality is sometimes called a \emph{weak $p$-Poincar\'{e} inequality}. The word ``weak'' indicates that the dilation factor $\lambda$ is allowed to be greater than $1$. Note also that it follows by~\cite[Lemma 5.6]{Mal1} that we may equivalently require that the inequality holds for all $p$-weak upper gradients $g$ of $u$ and, in particular, for all $X$-weak upper gradients $g$ of $u$ if $X \emb L^p_\loc$, i.e., if $\|f \chi_B\|_{L^p} \le c_B \|f\chi_B\|_X$ for all balls $B\subset \Pcal$. There are several other characterizations in~\cite[Proposition 4.13]{BjoBjo}, e.g., we may require that \eqref{eq:pPI} holds only for $u \in L^\infty$, or conversely that it holds for all measurable functions $u$ if the left-hand side is interpreted as $\infty$ whenever $u\chi_B \notin L^1$.
%
%
%
%
\section{Approximation by bounded functions}
\label{sec:trunc}
In this section, we will determine a set of sufficient conditions ensuring that truncated functions  provide a good approximation of Newtonian functions, which is an important step on the way to study the density of Lipschitz functions as these are bounded on bounded sets. In Section~\ref{sec:rispaces}, we will find a certain type of function spaces where the truncations are not dense, which will lead us later on to constructing examples when (locally) Lipschitz functions are not dense in the Newtonian space.
%
%
\begin{lem}
\label{lem:trunc_dense_AC}
Let $X$ be a quasi-Banach function lattice with absolutely continuous quasi-norm. Then every function $u \in X$ can be approximated by its truncations with arbitrary precision in the norm of $X$, i.e., if we define $u_\sigma \coloneq \max \{ \min \{u, \sigma\}, -\sigma\}$ for $\sigma\in\Rbb^+$, then $u_\sigma \to u$ in $X$ as $\sigma\to\infty$.
\end{lem}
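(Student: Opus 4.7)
The proof will be a direct application of the absolute continuity axiom~\ref{df:AC}, so I expect no real obstacle; the only thing to check is that the setup satisfies the hypotheses of~\ref{df:AC}.

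First I would make the pointwise observation that dominates everything. For any $\sigma>0$,
\[
  |u(x)-u_\sigma(x)| = (|u(x)|-\sigma)^+ \le |u(x)|\,\chi_{E_\sigma}(x),
\]
where $E_\sigma \coloneq \{x\in\Pcal : |u(x)|>\sigma\}$. In particular, $|u-u_\sigma|$ is pointwise decreasing as $\sigma\uparrow\infty$, and by the lattice property~\ref{df:BFL.latticeprop} the quantity $\sigma \mapsto \|u-u_\sigma\|_X$ is decreasing in $\sigma$. Hence it suffices to exhibit one sequence $\sigma_n\uparrow\infty$ along which $\|u-u_{\sigma_n}\|_X\to 0$.

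Next I would fix such a sequence, say $\sigma_n = n$, and apply the lattice property one more time to get
\[
  \|u-u_{\sigma_n}\|_X \le \|u\,\chi_{E_{\sigma_n}}\|_X.
\]
The sets $E_{\sigma_n}$ form a decreasing sequence of measurable sets, and their intersection is $\{x\in\Pcal : |u(x)|=\infty\}$. Because $u\in X$, it follows from axioms~\ref{df:qBFL.initial}--\ref{df:BFL.latticeprop} that $|u|<\infty$ a.e., so this intersection has measure zero. The absolute continuity~\ref{df:AC} applied to $u\in X$ and the sets $E_{\sigma_n}$ then yields $\|u\,\chi_{E_{\sigma_n}}\|_X\to 0$ as $n\to\infty$.

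Combining the two bounds gives $\|u-u_{\sigma_n}\|_X\to 0$, and by the monotonicity noted in the first step this upgrades to $\|u-u_\sigma\|_X\to 0$ as $\sigma\to\infty$, which is the claim.
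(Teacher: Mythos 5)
Your proof is correct and follows essentially the same route as the paper: the pointwise domination $|u-u_\sigma|\le|u|\chi_{\suplev{u}{\sigma}}$, the observation that $\bigcap_{\sigma>0}\suplev{u}{\sigma}$ is $\mu$-null because $|u|<\infty$ a.e., and an application of~\ref{df:AC}. The only difference is that you make explicit the reduction from the continuous limit $\sigma\to\infty$ to a sequence $\sigma_n\uparrow\infty$ via monotonicity of $\|u-u_\sigma\|_X$; the paper treats this as immediate, but your care here is well placed since~\ref{df:AC} is stated for sequences.
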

%
%
Recall that $\suplev{u}{\sigma}$ denotes the superlevel set of $|u|$ with level $\sigma \ge 0$, i.e., $\suplev{u}{\sigma} = \{x\in \Pcal: |u(x)| > \sigma\}$.
%
%
\begin{proof}
Let $u\in X$ and let $u_\sigma$ be its truncations at the levels $\pm \sigma$ for every $\sigma \in \Rbb^+$. Then, $u-u_\sigma = 0$ on $\Pcal \setminus \suplev{u}{\sigma}$. Since $|u|<\infty$ a.e.\@ in $\Pcal$, we have that $\meas{\bigcap_{\sigma>0} \suplev{u}{\sigma}} = 0$. The absolute continuity of the quasi-norm of $X$ implies that
\[
  \| u - u_\sigma\|_X =\| (|u|-\sigma) \chi_{\suplev{u}{\sigma}}\|_X \le \| u \chi_{\suplev{u}{\sigma}} \|_X \to 0\quad\mbox{as }\sigma\to\infty.
\qedhere
\]
\end{proof}
%
%
The following lemma shows that the measure of the superlevel sets of an $L^p_\fm$ function is finite if the level is chosen sufficiently large. In fact, it tends to zero as the level approaches infinity.
\begin{lem}
  \label{lem:superlevelsets-to-zero}
  Let $u \in L^p_\fm\fcrim$ for some $p>0$. Suppose further that $\mu$ is non-atomic. Then, $\mu(\suplev{u}{\sigma}) \to 0$ as $\sigma \to \infty$.
\end{lem}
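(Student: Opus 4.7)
The plan is to split the argument into two steps. First, I would produce some level $\sigma_0>0$ at which $\mu(\suplev{u}{\sigma_0}) < \infty$; second, I would apply a Chebyshev estimate on this finite-measure set to conclude. The second step is short: once $\mu(\suplev{u}{\sigma_0})<\infty$, the assumption $u \in L^p_\fm$ gives $u\chi_{\suplev{u}{\sigma_0}} \in L^p$, and for $\sigma \ge \sigma_0$,
\[
\sigma^p \mu(\suplev{u}{\sigma}) \le \int_{\suplev{u}{\sigma}} |u|^p\,d\mu \le \int_{\suplev{u}{\sigma_0}} |u|^p\,d\mu < \infty,
\]
so $\mu(\suplev{u}{\sigma}) \to 0$ as $\sigma \to \infty$.

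The main obstacle lies in the first step, and this is exactly where the non-atomicity hypothesis must intervene; without it the conclusion fails, as counting measure on $\Nbb$ with $u \equiv 1$ shows. I would argue by contradiction: assume $\mu(\suplev{u}{\sigma}) = \infty$ for every $\sigma > 0$ and construct a measurable set $A$ of finite $\mu$-measure on which $\int_A |u|^p\,d\mu = \infty$, contradicting $u \in L^p_\fm$. The key auxiliary fact I would invoke is that in a non-atomic $\sigma$-finite measure space, every measurable set of infinite measure contains measurable subsets of any prescribed finite measure. This follows by intersecting with an exhausting sequence $F_n \uparrow \Pcal$ of finite-measure sets to obtain finite-measure approximants of arbitrarily large measure, and then applying the classical Sierpi\'nski range theorem for non-atomic finite measures.

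With this tool in hand, I would construct inductively a sequence $\sigma_k \uparrow \infty$ growing fast enough that $\sigma_k^p \, 2^{-k} \ge k$ (for instance $\sigma_k = (k\,2^k)^{1/p}$) together with pairwise disjoint measurable sets $A_k \subset \suplev{u}{\sigma_k}$ of measure $\mu(A_k) = 2^{-k}$. At each step $k$, the set $\suplev{u}{\sigma_k} \setminus \bigcup_{j<k} A_j$ still has infinite measure (only finite measure has been removed), so the auxiliary fact supplies the desired $A_k$. Then $A \coloneq \bigcup_{k=1}^\infty A_k$ has $\mu(A) = 1 < \infty$, yet
\[
\int_A |u|^p\,d\mu = \sum_{k=1}^\infty \int_{A_k} |u|^p\,d\mu \ge \sum_{k=1}^\infty \sigma_k^p \, \mu(A_k) \ge \sum_{k=1}^\infty k = \infty,
\]
contradicting $u\chi_A \in L^p$ and thereby completing the proof.
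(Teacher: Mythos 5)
Your proposal is correct and follows essentially the same route as the paper: argue by contradiction, use non-atomicity to peel off pairwise disjoint sets of summable measure from superlevel sets at rapidly increasing levels, and assemble a finite-measure set on which $u\notin L^p$ (the paper takes $\mu(F_k)=1/k^2$ inside $\suplev{u}{k^{1/p}}$ where you take $\mu(A_k)=2^{-k}$ inside $\suplev{u}{(k2^k)^{1/p}}$ — same mechanism). The only cosmetic difference is that you finish with a Chebyshev bound while the paper invokes continuity of measure from above together with $\meas{\bigcap_{\sigma>0}\suplev{u}{\sigma}}=0$; both are equally elementary.
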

\begin{proof}
Since $|u|<\infty$ a.e., we obtain that $\meas{\bigcap_{\sigma>0} \suplev{u}{\sigma}} = 0$. If we show that $\meas{\suplev{u}{\sigma}} < \infty$ for some $\sigma>0$, then $\meas{\bigcap_{\sigma>0} \suplev{u}{\sigma}} = \lim_{\sigma \to \infty} \meas{\suplev{u}{\sigma}}$. 

Suppose on the contrary that $\meas{\suplev{u}{\sigma}} = \infty$ for every $\sigma>0$. Then, we can construct a set $F$ of finite measure such that $u\chi_F\notin L^p$ as follows. Let us choose a sequence of pairwise disjoint sets $F_k$, where $\meas{F_k} = 1/k^2$ and $F_k \subset \suplev{u}{k^{1/p}}$. Let now $F = \bigcup_{k=1}^\infty F_k$. Then, $\meas{F}<\infty$, but
\[
  \| u \chi_F \|_{L^p}^p \ge \sum_{k=1}^\infty k \meas{F_k} = \sum_{k=1}^\infty \frac{1}{k} = \infty,
\]
whence $u\chi_F \notin L^p$, which contradicts the assumption that $u \in L^p_\fm$.
\end{proof}
%
%
In order to investigate whether truncated functions are good approximations in Newtonian spaces, we need to check how truncation affects weak upper gradients. The following auxiliary lemma will help us settle this problem as the gradient may be modified so that it vanishes on a given level set of a Newtonian function.
%
%
\begin{lem}
\label{lem:glueing_lemma}
Let $X$ be a quasi-Banach function lattice. Suppose that $u\in\NX$ with an $X$-weak upper gradient $g\in X$. Given a constant $k\in\Rbb$, define $E=\{x\in\Pcal: u(x) = k\}$. Then, $g \chi_{\Pcal \setminus E}$ is an $X$-weak upper gradient of $u$ as well.
\end{lem}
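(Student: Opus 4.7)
The plan is to verify the upper gradient inequality \eqref{eq:ug_def} for $\tilde g \coloneq g \chi_{\Pcal \setminus E}$ along $\Mod_X$-a.e.\@ curve, exploiting only that $u \equiv k$ on $E$. First I would invoke a standard modulus argument to discard a single $\Mod_X$-null family so that for every remaining curve $\gamma\colon [0, l_\gamma] \to \Pcal$: (i) the upper gradient inequality for $g$ holds on $\gamma$ and on every subcurve of $\gamma$; and (ii) $g \circ \gamma$ is integrable on $[0, l_\gamma]$. Property (i) follows because if $\Gamma_0$ has zero $X$-modulus with Borel test function $\rho \in X$, then the family of curves possessing a subcurve in $\Gamma_0$ is witnessed null by the same $\rho$; (ii) is immediate from $g \in X$.

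For such a ``good'' curve $\gamma$, if $\gamma^{-1}(E) = \emptyset$ then $\tilde g = g$ along $\gamma$ and the inequality is trivial. Otherwise, set $t_0 \coloneq \inf \gamma^{-1}(E)$ and $t_1 \coloneq \sup \gamma^{-1}(E)$, and choose $s_n \in \gamma^{-1}(E)$ with $s_n \downarrow t_0$. Since $u(\gamma(s_n)) = k$, applying the upper gradient inequality for $g$ to the subcurve $\gamma|_{[0, s_n]}$ and letting $n \to \infty$ (by dominated convergence with integrable envelope $g \circ \gamma$) yields
\[
  |u(\gamma(0)) - k| \le \int_0^{t_0} g(\gamma(t))\,dt,
\]
and symmetrically $|u(\gamma(l_\gamma)) - k| \le \int_{t_1}^{l_\gamma} g(\gamma(t))\,dt$.

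By the definitions of $t_0$ and $t_1$, $\gamma(t) \notin E$ for every $t \in [0, t_0) \cup (t_1, l_\gamma]$, so $g = \tilde g$ along $\gamma$ on this set. The triangle inequality then produces
\[
  |u(\gamma(0)) - u(\gamma(l_\gamma))| \le \int_0^{t_0} \tilde g(\gamma(t))\,dt + \int_{t_1}^{l_\gamma} \tilde g(\gamma(t))\,dt \le \int_\gamma \tilde g\,ds,
\]
which is \eqref{eq:ug_def} for $\tilde g$. The edge cases $t_0 = 0$ or $t_1 = l_\gamma$ cause no trouble, since the same limit procedure then automatically forces $u(\gamma(0)) = k$ or $u(\gamma(l_\gamma)) = k$ and the corresponding ``outer'' integral is absent. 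The only mildly technical point is the subcurve reduction in step (i); beyond that, the proof reduces to the elementary observation that the gaps $[0, t_0)$ and $(t_1, l_\gamma]$ lie entirely outside $\gamma^{-1}(E)$, which is exactly where modifying $g$ on $E$ leaves the arclength integral unaffected.
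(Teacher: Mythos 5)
Your proof is correct, and it takes a genuinely different route from the paper's at the one nontrivial step. The paper invokes the fact (from~\cite[Theorem~6.7]{Mal1}) that $u$, having an $X$-weak upper gradient in $X$, is absolutely continuous along $\Mod_X$-a.e.\ curve; continuity of $u\circ\gamma$ then gives $u(\gamma(\alpha))=u(\gamma(\beta))=k$ directly at the boundary parameters $\alpha=\inf\gamma^{-1}(E)$, $\beta=\sup\gamma^{-1}(E)$, after which the curve is split into three arcs. You instead bypass absolute continuity entirely: you pick genuine parameters $s_n\in\gamma^{-1}(E)$ with $s_n\downarrow t_0$, apply the subcurve inequality for each $n$, and pass to the limit in the arclength integral. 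This is more self-contained, since it needs only the two ``good curve'' properties you isolate (subcurve inequality for $g$ and integrability of $g\circ\gamma$), both of which follow from the standard modulus-of-a-subcurve argument together with $g\in X$; the paper's route leans on a heavier prior result. In exchange, the paper's continuity argument is shorter and avoids the (harmless) limiting step. Two small technical remarks that you should incorporate when fleshing this out: first, the paper begins with a Borel representative $\tilde g$ of $g$ and then transfers the conclusion back to $g$ via~\cite[Lemma~4.10]{Mal1}; you work with $g$ directly, which is fine in spirit, but to be safe one typically runs the argument through a Borel representative so that $g\circ\gamma$ is measurable on all curves rather than merely on $\Mod_X$-a.e.\ curve. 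Second, the ``dominated convergence'' in your limit is more naturally just monotone convergence for the decreasing sequence $\int_0^{s_n} g\circ\gamma$ (or simply continuity of the integral in its upper limit, given that $g\circ\gamma$ is integrable); the conclusion $\int_0^{s_n}g\circ\gamma\,dt\to\int_0^{t_0}g\circ\gamma\,dt$ is what you need, and it holds.
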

%
%
\begin{proof}
Let $\tilde{g}$ be a Borel representative of $g$. We will show that $\tilde{g} \chi_{\Pcal\setminus E}$ is an $X$-weak upper gradient of $u$ and hence so is $g \chi_{\Pcal\setminus E}$ by~\cite[Lemma~4.10]{Mal1}. For $\Mod_X$-a.e.\@ curve~$\gamma$ we have that $u$ is absolutely continuous on $\gamma$ by~\cite[Theorem 6.7]{Mal1} and $\tilde{g}$ satisfies \eqref{eq:ug_def} for every subcurve $\gamma' = \gamma|_I$ by~\cite[Corollary 5.9]{Mal1}, where $I\subset [0, l_\gamma]$ is a closed interval. Let $\gamma: [0, l_\gamma] \to \Pcal$ be such a curve. If $\gamma \cap E = \emptyset$, then $\tilde{g}=\tilde{g} \chi_{\Pcal \setminus E}$ everywhere on $\gamma$. Suppose now that the curve $\gamma$ intersects with the set $E$. Let
\[
  \alpha = \inf\{ t\in[0, l_\gamma]: \gamma(t) \in E\}, \quad\mbox{and}\quad \beta = \sup\{ t\in[0, l_\gamma]: \gamma(t) \in E\}.
\]
Hence, $\gamma([0, \alpha)) \cap E = \emptyset = \gamma((\beta, l_\gamma]) \cap E $ and $\tilde{g}\circ \gamma = (\tilde{g}\chi_{\Pcal \setminus E})\circ \gamma$ on $[0, \alpha)\cup(\beta, l_\gamma]$. Furthermore, $u(\gamma(\alpha)) = u(\gamma(\beta)) = k$ since $u \circ \gamma \in \Ccal([0, l_\gamma])$. Consequently,
\begin{align*}
  |u(\gamma(0)) - u(\gamma(\alpha))| &\le \int_0^\alpha \tilde{g}(\gamma(t))\,dt = \int_0^\alpha (\tilde{g}\chi_{\Pcal\setminus E})(\gamma(t))\,dt,\\
  |u(\gamma(\alpha)) - u(\gamma(\beta))| & = 0 \le \int_\alpha^\beta (\tilde{g}\chi_{\Pcal\setminus E})(\gamma(t))\,dt,\\
  |u(\gamma(\beta)) - u(\gamma(l_\gamma))| &\le \int_\beta^{l_\gamma} \tilde{g}(\gamma(t))\,dt = \int_\beta^{l_\gamma} (\tilde{g}\chi_{\Pcal\setminus E})(\gamma(t))\,dt.
\end{align*}
These estimates together give that $|u(\gamma(0)) - u(\gamma(l_\gamma))| \le \int_\gamma \tilde{g}\chi_{\Pcal\setminus E}\,ds$ holds for $\Mod_X$-a.e.\@ curve $\gamma$ whence $\tilde{g}\chi_{\Pcal\setminus E}$ is an $X$-weak upper gradient of $u$ and so is $g\chi_{\Pcal\setminus E}$.
\end{proof}
%
%
Now, we are ready to prove that truncated functions are dense in $\NX$ as well, provided that $X$ has absolutely continuous quasi-norm. In Example~\ref{exa:trunc_not_dense} below, the absolute continuity is shown to be crucial for the density of truncations in $\NX$.
%
%
\begin{cor}
\label{cor:bdd-dense-in-N1X}
Let $X$ be a quasi-Banach function lattice with absolutely continuous quasi-norm. Then, every function $u \in \NX$ can be approximated by its truncations with arbitrary precision in $\NX$, i.e., if $u_\sigma \coloneq \max \{ \min \{u, \sigma\}, -\sigma\}$ for $\sigma\in\Rbb^+$, then $u_\sigma \to u$ in $\NX$ as $\sigma\to\infty$.
\end{cor}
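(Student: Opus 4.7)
The goal is to control the Newtonian quasi-seminorm
$\|u - u_\sigma\|_\NX = \|u - u_\sigma\|_X + \inf_h \|h\|_X$
as $\sigma \to \infty$, where the infimum runs over $X$-weak upper gradients $h$ of $u - u_\sigma$. The first summand tends to zero directly by Lemma~\ref{lem:trunc_dense_AC} applied to $u\in X \subset \NX$. Thus the whole task reduces to exhibiting, for each $\sigma>0$, a single $X$-weak upper gradient $h_\sigma$ of $u-u_\sigma$ whose $X$-quasi-norm vanishes as $\sigma \to \infty$.

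Fix $u \in \NX$ and let $g\in X$ be an $X$-weak upper gradient of $u$ (e.g.\@ the minimal one). The natural candidate is $h_\sigma \coloneq g\, \chi_{\suplev{u}{\sigma}}$. To see that $h_\sigma$ is an $X$-weak upper gradient of $u - u_\sigma$, I would argue in two short steps. First, the map $\phi\colon t \mapsto t - \max\{\min\{t,\sigma\},-\sigma\}$ is $1$-Lipschitz on $\Rbb$, so on any curve $\gamma$ for which \eqref{eq:ug_def} holds for $u$ (with $u$ finite on $\gamma$, which is the case for $\Mod_X$-a.e.\@ curve since $g\in X$) one has
\[
  |(u-u_\sigma)(\gamma(0)) - (u-u_\sigma)(\gamma(l_\gamma))| \le |u(\gamma(0)) - u(\gamma(l_\gamma))| \le \int_\gamma g \, ds,
\]
so $g$ itself is an $X$-weak upper gradient of $u - u_\sigma$. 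Second, note that $u - u_\sigma$ vanishes precisely on $\{x\in\Pcal\colon |u(x)|\le\sigma\} = \Pcal \setminus \suplev{u}{\sigma}$. Applying the glueing Lemma~\ref{lem:glueing_lemma} to the function $u - u_\sigma$ with the constant $k=0$ therefore yields that $h_\sigma = g\,\chi_{\suplev{u}{\sigma}}$ is an $X$-weak upper gradient of $u - u_\sigma$, as desired.

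It remains to control $\|h_\sigma\|_X$. Since any $u \in \NX$ is in particular in $X$ and so is finite $\mu$-a.e., the sets $\suplev{u}{\sigma}$ decrease as $\sigma\to\infty$ to a set of measure zero. The absolute continuity of the quasi-norm of $X$, applied to the fixed function $g\in X$ along the decreasing sequence $\suplev{u}{n}$ (which is sufficient since the quasi-norm is monotone in $\sigma$), gives $\|h_\sigma\|_X = \|g\,\chi_{\suplev{u}{\sigma}}\|_X \to 0$. Combining with Lemma~\ref{lem:trunc_dense_AC} and the definition of $\|\cdot\|_\NX$ finishes the proof.

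The only nontrivial point is the construction of $h_\sigma$, and even this is essentially painless once one notices the correct pairing: $u-u_\sigma$ is a $1$-Lipschitz truncation-type composition of $u$ (so $g$ remains an upper gradient) \emph{and} it vanishes exactly on the complement of the superlevel set, so Lemma~\ref{lem:glueing_lemma} kills the unwanted part of $g$. Everything else is routine application of absolute continuity.
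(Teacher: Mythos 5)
Your proof is correct and follows essentially the same route as the paper's: take a weak upper gradient $g$ of $u$, observe it is also a weak upper gradient of $u-u_\sigma$, apply Lemma~\ref{lem:glueing_lemma} to $u-u_\sigma$ with $k=0$ to cut $g$ down to $g\,\chi_{\suplev{u}{\sigma}}$, and finish with absolute continuity. The only (welcome) difference is that you spell out why $g$ is a weak upper gradient of $u-u_\sigma$ via the $1$-Lipschitz map $t\mapsto t-\max\{\min\{t,\sigma\},-\sigma\}$, a step the paper takes as self-evident.
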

%
%
\begin{proof}
Let $u\in\NX$ be given and suppose that $g_u \in X$ is its minimal $X$-weak upper gradient. Then, $g_u$ is an $X$-weak upper gradient of $u-u_\sigma$ as well. The previous lemma implies that $g_u\chi_{\suplev{u}{\sigma}}$ is also an $X$-weak upper gradient of $u-u_\sigma$ as
\[
  \suplev{u}{\sigma} = \{x\in\Pcal: |u(x)|>\sigma \} = \{x\in\Pcal: (u-u_\sigma)(x) \neq 0\}.
\]
Since $\meas{\bigcap_{\sigma>0} \suplev{u}{\sigma}} = 0$, the absolute continuity of the norm of $X$ leads to
\[
  \|u-u_\sigma\|_\NX \le \|u \chi_{\suplev{u}{\sigma}}\|_X + \|g_u \chi_{\suplev{u}{\sigma}}\|_X \to 0\quad \mbox{as }\sigma\to\infty.
  \qedhere
\]
\end{proof}
%
%
%
%
\section{Main results in their general form}
\label{sec:LipDensGeneral}
The main results on density of Lipschitz functions in Newtonian spaces are stated and proven in this section. Here, we show general theorems and provide examples of Newtonian spaces where they can be readily applied. Various special cases of the main theorems, whose hypotheses are easier to verify, will be discussed in Section~\ref{sec:LipDensSpec}. Recall that in this as well as in all subsequent sections, we will assume that $\mu$ is a non-atomic doubling measure (unless explicitly stated otherwise).

A different approach to study Sobolev-type functions on metric measure spaces was proposed by Haj\l{}asz in~\cite{Haj96}. Instead of (weak) upper gradients, another type of gradient was used, which allows a simple construction of Lipschitz approximations.
\begin{df}
\label{df:haj-grad}
Let $u: \Pcal \to \overline{\Rbb}$. Then, a measurable function $h: \Pcal \to [0, \infty]$ is a \emph{Haj\l{}asz gradient} of $u$ if there is a set $E \subset \Pcal$ with $\meas{E} = 0$ such that
\begin{equation}
  \label{eq:def-haj}
  |u(x) - u(y) | \le \dd(x,y) (h(x) + h(y)) \quad \mbox{for every $x,y \in \Pcal \setminus E$.}
\end{equation}
\end{df}
%
%
Jiang, Shanmugalingam, Yang, and Yuan~\cite[Theorem~1.3]{JiaShaYanYua} have shown that $4h$ is an $X$-weak upper gradient of a suitable representative of a function $u\in X \subset L^1_\loc$ with a Haj\l{}asz gradient $h \in X$, provided that $\mu$ is doubling. The main idea of this claim can be traced back to J.~Mal\'y, cf.~Haj\l{}asz~\cite[Proposition 1]{Haj94}. Slightly improved version can be found in Heinonen, Koskela, Shanmugalingam and Tyson~\cite[Lemma~9.2.5]{HeiKosShaTys}, where $3h$ is shown to be an upper gradient of $u\in \Ccal\cap L^1_\loc$ with a Haj\l{}asz gradient $h\in L^1_\loc$. Such a result is further refined in~\cite{Mal5}, where $2h$ is proven to be an $X$-weak upper gradient of a measurable function $u$ that is absolutely continuous on $\Mod_X$-a.e.\@ curve, regardless of the doubling condition of $\mu$ and regardless of the summability of $u$ or $h$. Moreover, the factor $2$ is shown to be optimal.

On the other hand, without any additional assumptions on the metric measure space, it is in general impossible to find a Haj\l{}asz gradient using a (weak) upper gradient of a function. If $\Pcal$ supports a Poincar\'e inequality, then a certain maximal function of an upper gradient is a Haj\l{}asz gradient, see the proof of Theorem~\ref{thm:general_Lip-dens} below or Haj\l{}asz~\cite{Haj}, cf.\@ also Shanmugalingam~\cite[Theorem~4.9]{Sha}.
%
%
\begin{thm}
\label{thm:Haj_Lip-dens}
Let $X$ be a quasi-Banach function lattice with absolutely continuous quasi-norm. Suppose that $u\in \NX$ has a Haj\l{}asz gradient $h$ that satisfies the weak estimate $\|\sigma \chi_{\suplev{h}{\sigma}} \|_X \to 0$ as $\sigma \to \infty$. (In particular, it suffices to suppose that $h\in X$.) Then, for every $\eps>0$ there is a Lipschitz function $u_\eps \in \NX$ such that $\|u-u_\eps\|_\NX < \eps$.

Moreover, we can find measurable sets $E_\eps \subset \Pcal$ such that $u=u_\eps$ in $\Pcal \setminus E_\eps$ and $\meas{\bigcap_{\eps>0} E_\eps} = 0$. If both $\suplev{h}{\sigma}$ and $\suplev{u}{\sigma}$ are of finite measure for some $\sigma>0$, then we can require $\meas{E_\eps}<\eps$. (In particular, it suffices to assume that $u, h\in L^q_\fm$ or that $X \subset L^q_\fm\fcrim$ for some $q>0$.)
\end{thm}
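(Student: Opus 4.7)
My plan is to reduce to bounded $u$ via Corollary~\ref{cor:bdd-dense-in-N1X}, construct the Lipschitz approximation by extending $u|_{F_\sigma}$ from the good set $F_\sigma := \{h \le \sigma\}$ using McShane's formula, and then control both the $X$-quasinorm and the weak upper gradient quasinorm of the error by combining the weak estimate on $h$ with absolute continuity of $\|\cdot\|_X$.

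\emph{Reduction and Lipschitz extension.} Truncation of $u$ at level $M$ is $1$-Lipschitz, so $h$ remains a Haj\l{}asz gradient of $u_M := \max\{\min\{u,M\},-M\}$, and by Corollary~\ref{cor:bdd-dense-in-N1X} one has $u_M \to u$ in $\NX$. It therefore suffices to work with $|u|\le M$. Picking a measurable representative of $h$, \eqref{eq:def-haj} immediately yields that $u|_{F_\sigma}$ is $2\sigma$-Lipschitz off an exceptional null set; McShane's extension, followed by truncation back to $[-M,M]$ (which does not increase the Lipschitz constant), gives a $2\sigma$-Lipschitz function $u_\sigma:\Pcal\to[-M,M]$ coinciding with $u$ a.e.\@ on $F_\sigma$.

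\emph{Error estimates.} Since $|u-u_\sigma|\le 2M$ and $u-u_\sigma=0$ a.e.\@ off $\suplev{h}{\sigma}$, the lattice property gives
\[
\|u-u_\sigma\|_X \le 2M\,\|\chi_{\suplev{h}{\sigma}}\|_X = \frac{2M}{\sigma}\,\|\sigma\chi_{\suplev{h}{\sigma}}\|_X \xrightarrow{\sigma\to\infty}0
\]
by the weak estimate. For the gradient, let $g_u\in X$ be the minimal $X$-weak upper gradient of $u$ (from~\cite{Mal2}). The constant $2\sigma$ is an upper gradient of the $2\sigma$-Lipschitz function $u_\sigma$, so $g_u+2\sigma$ is an upper gradient of $v:=u-u_\sigma$. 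The key claim is that
\[
g^* := (g_u+2\sigma)\chi_{\suplev{h}{\sigma}}
\]
is an $X$-weak upper gradient of $v$; granting it,
\[
\|g^*\|_X \le \cconc\,\|g_u\chi_{\suplev{h}{\sigma}}\|_X + 2\cconc\,\|\sigma\chi_{\suplev{h}{\sigma}}\|_X \xrightarrow{\sigma\to\infty}0,
\]
the first term vanishing by absolute continuity of $\|\cdot\|_X$ (using $\mu(\bigcap_\sigma\suplev{h}{\sigma})\le\mu(\{h=\infty\})=0$) and the second by hypothesis. Selecting $\sigma$ large produces the desired $u_\eps := u_\sigma$ with $\|u-u_\eps\|_\NX<\eps$.

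\emph{Exceptional sets and main obstacle.} Set $E_\eps := \suplev{h}{\sigma}\cup\suplev{u}{M}$ for the chosen $M,\sigma$; then $u=u_\eps$ off $E_\eps$ and $\mu(\bigcap_\eps E_\eps)\le\mu(\{h=\infty\}\cup\{|u|=\infty\})=0$. Under the additional finiteness hypotheses, Lemma~\ref{lem:superlevelsets-to-zero} allows us to force $\mu(E_\eps)<\eps$ by enlarging $M$ and $\sigma$. The main difficulty is the upper-gradient claim for $g^*$: Lemma~\ref{lem:glueing_lemma} does not apply verbatim because the constant $2\sigma$ is not in $X$, so I would reopen its proof, splitting any $\Mod_X$-a.e.\@ curve $\gamma$ at the first and last times it hits $F_\sigma$ (where $v$ vanishes by absolute continuity of $v\circ\gamma$) and controlling the two outer segments, which lie in $\suplev{h}{\sigma}$, by $g_u$ for the $u$-part and by the Lipschitz constant $2\sigma$ for the $u_\sigma$-part. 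A secondary technicality is choosing a Borel representative of $h$ so that $F_\sigma$ is a well-defined level set and the null exceptional set from \eqref{eq:def-haj} does not interact badly with the $\Mod_X$-zero curve families.
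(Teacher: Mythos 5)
Your proposal is correct and follows essentially the same route as the paper: reduce to bounded $u$ via Corollary~\ref{cor:bdd-dense-in-N1X}, take the good set $F_\sigma = \Pcal\setminus\suplev{h}{\sigma}$ (off the exceptional null set from \eqref{eq:def-haj}), McShane-extend and truncate, control the $X$-error by the weak estimate on $h$, and control the gradient error via a glueing argument together with absolute continuity of the quasi-norm. The one difference is a matter of care rather than strategy: the paper invokes Lemma~\ref{lem:glueing_lemma} directly with upper gradient $g+2\sigma$, glossing over the point you raise that the constant $2\sigma$ need not lie in $X$ (so the hypothesis $g\in X$ of that lemma is not literally met); your plan to reopen the lemma's proof is the right fix, and it closes cleanly because $g$ is a genuine upper gradient of the truncation $v$ (so the subcurve inequality for $g+2\sigma$ holds on \emph{every} curve, not just $\Mod_X$-a.e.), and $v-u_\eps$ is absolutely continuous along $\Mod_X$-a.e.\ curve since $v\in\NX$ and $u_\eps$ is Lipschitz. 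The bookkeeping of $E_\eps$ and the use of Lemma~\ref{lem:superlevelsets-to-zero} for the finite-measure refinement also match the paper.
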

%
%
Note that we will not use the doubling condition of $\mu$ in the proof, and indeed Theorem~\ref{thm:Haj_Lip-dens} holds even if the measure violates this condition. On the other hand, $\mu$ needs to be assumed non-atomic.
%
%
\begin{proof}
Since $\sigma \chi_{\suplev{h}{\sigma}} \le h \chi_{\suplev{h}{\sigma}}$ for every $\sigma \ge 0$, the absolute continuity of the quasi-norm of $X$ yields that $\|\sigma \chi_{\suplev{h}{\sigma}}\|_X \le \|h \chi_{\suplev{h}{\sigma}}\|_X \to 0$ as $\sigma \to \infty$ if $h \in X$.

Let $\eps>0$ and set $\eta = \eps / 6\cconc^2$, where $\cconc \ge 1$ is the modulus of concavity of the quasi-norm of $X$. Using Corollary~\ref{cor:bdd-dense-in-N1X}, we find $\sigma_0>1/\eps$ such that $\|u-v\|_\NX < \eta$, where $v$ is the truncation of $u$ at the levels $\pm\sigma_0$. Evidently, $u=v$ in $\Pcal \setminus \suplev{u}{\sigma_0}$. Moreover, if $\meas{\suplev{u}{\sigma}}<\infty$ for some $\sigma>0$,  then $0 = \meas{\bigcap_{\sigma>0} \suplev{u}{\sigma}} = \lim_{\sigma \to \infty} \meas{\suplev{u}{\sigma}}$. Therefore, we can choose $\sigma_0>0$ sufficiently large to obtain $\meas{\suplev{u}{\sigma_0}}<\eta$ in this case. Note that $h$ is a Haj\l{}asz gradient of $v$ as well.

Now, we will show that the weak estimate $\|\sigma \chi_{\suplev{h}{\sigma}}\|_X \to 0$ as $\sigma \to \infty$ yields that $h<\infty$ a.e.
Let $Q=\{x\in \Pcal: h(x) = \infty\}$. Then, $\|\sigma \chi_Q\|_X \le \|\sigma \chi_{\suplev{h}{\sigma}}\|_X \to 0$ as $\sigma \to \infty$. Thus, $\|\chi_Q\|_X = 0$, whence $\meas{Q} = 0$.

For an upper gradient $g\in X$ of $u$, we can find $\sigma_1\ge\sigma_0$ such that $\|g \chi_{\suplev{h}{\sigma_1}} \|_X < \eta$ by the absolute continuity of the quasi-norm of $X$. If $\meas{\suplev{h}{\sigma}} < \infty$ for some $\sigma>0$, then $0 = \meas{\bigcap_{\sigma>0} \suplev{h}{\sigma}} = \lim_{\sigma \to \infty} \meas{\suplev{h}{\sigma}}$. Therefore, we can choose $\sigma_1$ sufficiently large so that $\meas{\suplev{h}{\sigma_1}}<\eta$ in this case.

Now, fix $\sigma \ge \sigma_1$ such that $\| \sigma \chi_{\suplev{h}{\sigma}} \|_X < \eta$. 
Let $E\subset \Pcal$ be the exceptional set, where \eqref{eq:def-haj} fails, and let $A_\eta = E \cup \suplev{h}{\sigma}$. Thus, we obtain that $v|_{\Pcal \setminus A_\eta}$ is $2\sigma$-Lipschitz continuous, since $|v(x) - v(y)| \le \dd(x,y) (h(x) + h(y)) \le 2\sigma \dd(x,y)$. We define $u_\eps$ as the truncation of the upper McShane extension of $v|_{\Pcal \setminus A_\eta}$ at levels $\pm \sigma$, i.e.,
\[
  u_\eps (x) = \max\{-\sigma, \min\{ \sigma, \inf\{v(y) + 2\sigma \dd(x,y): y\in \Pcal \setminus A_\eta\} \} \} \quad \mbox{for }x\in\Pcal.
\]
As $\sup_{x\in\Pcal} |v(x)| \le \sigma_0 \le \sigma_1 \le \sigma$, we have 
\[
  \|v-u_\eps\|_X \le \| (|v| + \sigma) \chi_{A_\eta}\|_X \le 2\|\sigma \chi_{\suplev{h}{\sigma}}\|_X < 2\eta.
\]
Since $g\in X$ is an upper gradient of $u$, it is an upper gradient of $v$ as well. Then, $g+ 2\sigma$ is an upper gradient of $v-u_\eps$. Furthermore, it follows by Lemma~\ref{lem:glueing_lemma} that $(g+ 2\sigma) \chi_{A_\eta}$ is an $X$-weak upper gradient of $v-u_\eps$, whose minimal $X$-weak upper gradient can be estimated by
\[
  \| g_{v-u_\eps} \|_X \le \| (g+ 2\sigma) \chi_{A_\eta}\|_X \le \cconc( \|g \chi_{\suplev{h}{\sigma}}\|_X + 2  \|\sigma \chi_{\suplev{h}{\sigma}}\|_X) < 3\cconc \eta.
\]
Therefore,
\begin{align*}
  \|u-u_\eps\|_\NX & \le \cconc (\|u- v\|_\NX + \|v- u_\eps\|_\NX) \\
	& = \cconc (\|u- v\|_\NX + \|v- u_\eps\|_X + \| g_{v-u_\eps} \|_X)
    < \cconc (\eta + 2\eta + 3\cconc\eta) \le \eps.
\end{align*}
We see that $u_\eps = v$ outside of $A_\eta$ and $v = u$ outside of $\suplev{u}{\sigma_0}$, whence $u_\eps = u$ in $\Pcal \setminus E_\eps$, where $E_\eps = A_\eta \cup \suplev{u}{\sigma_0} = E \cup \suplev{h}{\sigma} \cup \suplev{u}{\sigma_0}$.
Both $\sigma$ and $\sigma_0$ depend on $\eps$ and $\sigma \ge \sigma_0 \to \infty$ as $\eps \to 0$. Thus, $\bigcap_{\eps>0} E_\eps = E\cup \bigcap_{\tau>0} (\suplev{h}{\tau} \cup \suplev{u}{\tau})$, which yields that $\meas{\bigcap_{\eps>0} E_\eps}=0$ since both $h$ and $u$ are finite a.e.
If the superlevel sets are of finite measure, then $\meas{E_\eps} \le \meas{\suplev{h}{\sigma}} + \meas{\suplev{u}{\sigma_0}} < 2 \eta < \eps$.

If $u,h \in L^p_\fm\fcrim$ for some $p>0$, then $\meas{\suplev{u}{\sigma} \cup \suplev{h}{\sigma}} \to 0$ as $\sigma \to \infty$ by Lemma~\ref{lem:superlevelsets-to-zero}.

Suppose now that $X\subset L^p_\fm$ for some $p>0$. Since $u\in X$, we have $u\in L^p_\fm$ and hence $\meas{\suplev{u}{\sigma}} \to 0$ as $\sigma \to \infty$ by Lemma~\ref{lem:superlevelsets-to-zero}. It remains to prove that $\suplev{h}{\sigma}$ is of finite measure for some $\sigma > 0$. Suppose on the contrary that $\meas{\suplev{h}{\sigma}} = \infty$ for all $\sigma>0$. Since $\Bigl\|\sigma^{1/p} \chi_{\suplev{h}{\sigma^{1/p}}}\Bigr\|_X \to 0$ as $\sigma\to \infty$, there is a sequence $\{\sigma_n\}_{n=1}^\infty\subset\Rbb^+$ such that $\sigma_n \ge n$ and $\Bigl\|\sigma_n^{1/p} \chi_{\suplev{h}{\sigma_n^{1/p}}}\Bigr\|_X \le (2\cconc)^{-n}$ for every $n\in\Nbb$. We choose a sequence of pairwise disjoint sets $F_n$ such that $\smash{F_n \subset \suplev{h}{\sigma_n^{1/p}}}$ and $\meas{F_n} = 1/n^2$. Let $f = \sum_{n=1}^\infty \sigma_n^{1/p} \chi_{F_n}$ and $F = \bigcup_{n=1}^\infty F_n$. Then,
\[
  \|f\|_X \le \sum_{n=1}^\infty \cconc^n \sigma_n^{1/p} \| \chi_{F_n} \|_X \le \sum_{n=1}^\infty \cconc^n \sigma_n^{1/p} \Bigl\| \chi_{\suplev{h}{\sigma_n^{1/p}}} \Bigr\|_X \le \sum_{n=1}^\infty \frac{1}{2^n} = 1.
\]
Hence, $f \in X$ but $f \notin L^p_\fm$ since $\meas{F} < \infty$ and
\[
  \| f \chi_F \|_{L^p}^p = \sum_{n=1}^\infty \sigma_n \meas{F_n} = \sum_{n=1}^\infty \frac{\sigma_n}{n^2} \ge \sum_{n=1}^\infty \frac{1}{n}= \infty,
\]
which contradicts the inclusion $X\subset L^p_\fm$. We have thus shown that $\meas{\suplev{h}{\sigma}} < \infty$ for some $\sigma>0$. Consequently, $\lim_{\sigma \to \infty} \meas{\suplev{h}{\sigma}} = \meas{\bigcap_{\sigma>0} \suplev{h}{\sigma}} = 0$.
\end{proof}
%
%
Note that the hypotheses in Theorem~\ref{thm:Haj_Lip-dens} are sufficient but not necessary by any means. We saw in Proposition~\ref{pro:NX=X-dens} that the density of Lipschitz functions in $\NX$ relies only on the properties of $X$ if the Newtonian space is trivial (i.e., if $\NX = X$).

Another tool to study Lipschitz and H\"older continuity of Sobolev (thus Newtonian) functions was introduced by Calder\'on and Scott~\cite{CalSco} in 1978, cf.~Calder\'on~\cite{Cal}.
%
%
\begin{df}
Let $u \in L^1_\loc(\Pcal)$. Then, for $\alpha \in (0,1]$, we define the \emph{fractional sharp maximal function} by
\[
  u^\sharp_\alpha(x) = \sup_{r>0} \frac{1}{r^\alpha} \fint_{B(x,r)} |u - u_{B(x,r)}|\,d\mu, \quad x\in\Pcal.
\]
\end{df}
Roughly speaking, $u^\sharp_\alpha$ measures the $\alpha$-H\"older continuity of a function. Since we are interested in Lipschitz continuity, we will only work with $u^\sharp_1$.
\begin{rem}
If $u$ is $L$-Lipschitz continuous, then obviously $u^\sharp_1 \le 2L$. The converse also holds true. Namely, if a function $u\in L^1_\loc$ has $u^\sharp_1 \in L^\infty$, then there is a Lipschitz continuous function $\tilde{u}$ such that $u=\tilde{u}$ a.e. Boundedness of $u^\sharp_1$ guarantees that $u$ has a Haj\l asz gradient $h\in L^\infty$, which was shown by Haj\l asz and Kinnunen~\cite[Lemma~3.6]{HajKin}. Let $L = \|h\|_{L^\infty}$ and $E_L = \suplev{h}{L} \cup E$, where $E$ is the set where \eqref{eq:def-haj} fails. Then, $u|_{\Pcal\setminus E_L}$ is $2L$-Lipschitz and it has a unique continuous extension to $\Pcal$ since $\Pcal\setminus E_L$ is dense in $\Pcal$. Such an extension retains the $2L$-Lipschitz continuity.
\end{rem}
\begin{thm}
\label{thm:general_sharp_Lip-dens}
Assume that $X$ is a quasi-Banach function lattice with absolutely continuous quasi-norm. Let $u \in \NX$ and suppose that the fractional sharp maximal function $v^\sharp_1$ satisfies the weak estimate $\|\sigma \chi_{\suplevshp{v}{\sigma}} \|_X \to 0$ as $\sigma \to \infty$ for every truncation $v$ of $u$, where
\[
  \suplevshp{v}{\sigma} \coloneq \suplev{v^\sharp_1}{\sigma} = \{ x\in \Pcal: {v}^\sharp_1(x) > \sigma \} \quad \mbox{for }\sigma \ge 0.
\]
(In particular, it suffices that $\|\sigma \chi_{\suplevshp{u}{\sigma}} \|_X \to 0$ as $\sigma \to \infty$.) 
Then, for every $\eps>0$ there is a Lipschitz function $u_\eps \in \NX$ such that $\|u-u_\eps\|_\NX < \eps$.
\end{thm}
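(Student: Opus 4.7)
The plan is to reduce to the bounded case via Corollary~\ref{cor:bdd-dense-in-N1X} and then invoke Theorem~\ref{thm:Haj_Lip-dens} after producing a Haj\l{}asz gradient controlled by $v^\sharp_1$. Given $\eps > 0$, I would first use Corollary~\ref{cor:bdd-dense-in-N1X} to select $\sigma_0 > 0$ so large that the truncation $v \coloneq \max\{\min\{u, \sigma_0\}, -\sigma_0\}$ satisfies $\|u - v\|_\NX < \eps/(2\cconc)$. Since $v$ is bounded and balls in $\Pcal$ have finite measure, $v \in L^1_\loc(\Pcal)$, so $v^\sharp_1$ is well defined pointwise.

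The heart of the argument is to show that a suitable multiple of $v^\sharp_1$ is a Haj\l{}asz gradient of $v$. A standard telescoping chain-of-balls argument, valid for doubling $\mu$ (cf.\ Haj\l{}asz--Kinnunen~\cite[Lemma~3.6]{HajKin} and the Calder\'on--Scott framework~\cite{CalSco}), supplies a constant $c = c(c_\dbl) > 0$ and a $\mu$-null set $E$ such that
\[
  |v(x) - v(y)| \le c\,\dd(x,y)\bigl(v^\sharp_1(x) + v^\sharp_1(y)\bigr) \quad \text{for all } x, y \in \Pcal \setminus E,
\]
where we pass, if necessary, to the Lebesgue representative of $v$ (which differs from $v$ at most on a $\mu$-null set and hence leaves both $\|v\|_\NX$ and the Newtonian gradient structure unchanged). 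Consequently, $h \coloneq c\,v^\sharp_1$ is a Haj\l{}asz gradient of $v$, and the hypothesis translates directly into
\[
  \|\sigma \chi_{\suplev{h}{\sigma}}\|_X = c\bigl\|(\sigma/c)\,\chi_{\suplevshp{v}{\sigma/c}}\bigr\|_X \to 0 \quad \text{as } \sigma \to \infty.
\]
Theorem~\ref{thm:Haj_Lip-dens} applied to $v$ with Haj\l{}asz gradient $h$ then yields a Lipschitz function $u_\eps \in \NX$ with $\|v - u_\eps\|_\NX < \eps/(2\cconc)$, and the quasi-triangle inequality gives $\|u - u_\eps\|_\NX \le \cconc(\|u - v\|_\NX + \|v - u_\eps\|_\NX) < \eps$.

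For the parenthetical sufficient condition (that $\|\sigma \chi_{\suplevshp{u}{\sigma}}\|_X \to 0$ suffices), note that the truncation map is $1$-Lipschitz on $\Rbb$, so $|v(x) - v(y)| \le |u(x) - u(y)|$ pointwise; integrating twice over a ball $B$ via Fubini gives $\fint_B |v - v_B|\,d\mu \le 2 \fint_B |u - u_B|\,d\mu$ and hence $v^\sharp_1 \le 2 u^\sharp_1$ everywhere. Therefore $\suplevshp{v}{\sigma} \subset \suplevshp{u}{\sigma/2}$ for every truncation $v$, so the weak estimate on $u^\sharp_1$ passes to every truncation at the cost of a harmless factor of~$2$.

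The main obstacle is the pointwise inequality relating $|v(x) - v(y)|$ to $v^\sharp_1$ in the full generality of a doubling metric measure space; while this is classical, it requires careful bookkeeping of Lebesgue points and the $\mu$-null exceptional set to fit the framework of Definition~\ref{df:haj-grad}. Once this inequality is in hand, the remainder is a clean composition of density of truncations (Corollary~\ref{cor:bdd-dense-in-N1X}) with the Haj\l{}asz-gradient version of the density theorem (Theorem~\ref{thm:Haj_Lip-dens}).
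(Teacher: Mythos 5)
Your proposal is correct and follows essentially the same route as the paper: truncate via Corollary~\ref{cor:bdd-dense-in-N1X}, invoke Haj\l asz--Kinnunen~\cite[Lemma~3.6]{HajKin} to get $c\,v^\sharp_1$ as a Haj\l asz gradient of $v$, apply Theorem~\ref{thm:Haj_Lip-dens}, and finish with the quasi-triangle inequality, with the parenthetical claim handled by the same $v^\sharp_1 \le 2u^\sharp_1$ computation. One small remark: the ``pass to the Lebesgue representative'' precaution is unnecessary (and, if actually carried out, would require justification, since a modification on a $\mu$-null set need not preserve $\|\cdot\|_\NX$), because Definition~\ref{df:haj-grad} already permits a $\mu$-null exceptional set $E$, so the a.e.\ inequality from~\cite[Lemma~3.6]{HajKin} fits the framework directly without altering $v$.
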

Similarly as before, we can find $u_\eps=u$ outside of a set of arbitrarily small measure provided that there are some superlevel sets of $u$ and $u^\sharp_1$ (or $v^\sharp_1$) of finite measure.
%
%
\begin{proof}
Whenever $v$ is a truncation of $u$, we have $|v(x) - v(y)| \le |u(x) - u(y)|$ for every $x,y \in \Pcal$, whence
\begin{multline*}
  \fint_B |v(x)-v_B|\,d\mu(x)  \le \fint_B \fint_B |v(x) - v(y)| \,d\mu(x)\,d\mu(y) \\
	 \le \fint_B \fint_B |u(x) - u(y) + u_B - u_B| \,d\mu(x)\,d\mu(y) \le 2 \fint_B |u(x) - u_B|\,d\mu(x).
\end{multline*}
Therefore, $v^\sharp_1 \le 2 u^\sharp_1$ and if $u^\sharp_1$ satisfies the weak estimate, then so does $v^\sharp_1$.

Let $\eps > 0$. Then, there is a truncation $v\in\NX$ of $u$ such that $\|u-v\|_\NX < \eps/2\cconc$ by Corollary~\ref{cor:bdd-dense-in-N1X}.
Applying~\cite[Lemma~3.6]{HajKin}, we see that $c v^\sharp_1$ is a Haj\l asz gradient of~$v$ for some $c = c(c_\dbl) > 0$. Thus, there is a Lipschitz function $u_\eps \in \NX$ such that $\|u_\eps - v\|_\NX < \eps/2\cconc$ by Theorem~\ref{thm:Haj_Lip-dens}. Finally, the triangle inequality yields
\[
  \| u-u_\eps\|_\NX \le \cconc (\|u-v\|_\NX + \|v-u_\eps\|_\NX) < \eps.
\qedhere
\]
\end{proof}
%
%
In the previous proof, we have used that a multiple of the fractional sharp maximal function $u^\sharp_1$ is a Haj\l asz gradient of a function $u\in L^1_\loc$. On the other hand, if $u \in L^1_\loc$ has a Haj\l asz gradient $h$, then it is easy to show that $u^\sharp_1 \le 4 M_1^c h$, where $M_1^c$ is the centered Hardy--Littlewood maximal operator. Note that this estimate holds true even if $\mu$ is not doubling.

Similarly as with the Haj\l{}asz gradients, it is in general impossible without any additional assumptions on the metric measure space to find (or at least provide an estimate for) the fractional sharp maximal function $u^\sharp_1$ using an ($X$-weak) upper gradient of $u\in\NX$. A clear connection, perhaps not optimal, is however obtained if $\Pcal$ supports a $p$-Poincar\'e inequality (see Definition~\ref{df:pPI} above).
\begin{thm}
\label{thm:general_Lip-dens}
Assume that $\Pcal$ is a $p$-Poincar\'e space for some $p \in [1, \infty)$. Suppose further that $X$ is a quasi-Banach function lattice with absolutely continuous quasi-norm and that $\|\sigma \chi_{\suplevp{p}{v}{\sigma}} \|_X \to 0$ as $\sigma \to \infty$ whenever $v\in X$, where $\suplevp{p}{v}{\sigma}$ is the superlevel set of $M_pv$. Then, the set of Lipschitz functions is dense in $\NX$.
\end{thm}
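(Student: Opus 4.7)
The plan is to reduce this to Theorem~\ref{thm:general_sharp_Lip-dens} by verifying its hypothesis for an arbitrary $u \in \NX$, namely that $\|\sigma \chi_{\suplevshp{v}{\sigma}}\|_X \to 0$ as $\sigma \to \infty$ for every truncation $v$ of $u$. The $p$-Poincar\'e inequality will serve as the bridge: it will let us bound $v^\sharp_1(x)$ pointwise by a multiple of $M_p g(x)$ for any upper gradient $g$ of $u$, and the standing hypothesis will then transfer the weak-type estimate from $M_p g$ to $v^\sharp_1$.

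Concretely, fix $u \in \NX$. By the definition of $\|\cdot\|_\NX$ in~\eqref{eq:def-N1X-norm}, there exists a Borel upper gradient $g \in X$ of $u$. This $g$ is simultaneously an upper gradient of every truncation $v \coloneq \max\{\min\{u,\sigma_0\},-\sigma_0\}$, because the truncation map is $1$-Lipschitz and hence $|v(\gamma(0)) - v(\gamma(l_\gamma))| \le |u(\gamma(0)) - u(\gamma(l_\gamma))| \le \int_\gamma g\,ds$ for every curve $\gamma$; moreover $v$ is bounded and thus belongs to $L^1_\loc(\Pcal)$. The $p$-Poincar\'e inequality applied to $v$ on the ball $B(x,r)$, together with $\diam B(x,r) \le 2r$ and $\lambda B(x,r) \ni x$, gives
\[
  \frac{1}{r} \fint_{B(x,r)} |v - v_{B(x,r)}|\,d\mu \le 2 c_{\PI} \Biggl(\fint_{\lambda B(x,r)} g^p\,d\mu\Biggr)^{1/p} \le 2 c_{\PI} M_p g(x).
\]
Taking the supremum over $r > 0$ yields $v^\sharp_1(x) \le 2 c_{\PI} M_p g(x)$ for every $x \in \Pcal$.

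Consequently, $\suplevshp{v}{\sigma} \subset \suplevp{p}{g}{\sigma/(2 c_{\PI})}$, and the lattice property~\ref{df:BFL.latticeprop} combined with the standing hypothesis applied to $g \in X$ yields
\[
  \|\sigma \chi_{\suplevshp{v}{\sigma}}\|_X \le 2 c_{\PI} \Bigl\| \tfrac{\sigma}{2 c_{\PI}}\,\chi_{\suplevp{p}{g}{\sigma/(2 c_{\PI})}} \Bigr\|_X \to 0 \quad \text{as } \sigma \to \infty.
\]
Since the same $g$ works uniformly for every truncation $v$ of $u$, Theorem~\ref{thm:general_sharp_Lip-dens} applies and produces, for each $\eps > 0$, a Lipschitz function $u_\eps \in \NX$ with $\|u - u_\eps\|_\NX < \eps$, completing the proof.

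The argument is essentially a linking exercise between earlier results and involves no genuinely hard step. The one mild point worth flagging is the decision to work with a bona fide upper gradient $g \in X$ of $u$ (directly available from the definition of $\NX$) rather than with a minimal $X$-weak upper gradient; this sidesteps the embedding $X \emb L^p_\loc$ that would otherwise be needed to invoke~\eqref{eq:pPI} for $X$-weak upper gradients, as noted in the remark following Definition~\ref{df:pPI}.
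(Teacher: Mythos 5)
Your proof is correct and follows essentially the same route as the paper: estimate the fractional sharp maximal function by $M_p$ of an upper gradient via the $p$-Poincar\'e inequality, and hand the resulting weak estimate to Theorem~\ref{thm:general_sharp_Lip-dens}. The paper's proof is terser and applies the Poincar\'e inequality to $u \in \NX$ directly (writing $u^\sharp_1 \le 2c_{\PI} M_p g$), which tacitly relies either on the interpretation of~\eqref{eq:pPI} for general measurable functions noted after Definition~\ref{df:pPI} or on the comparison $v^\sharp_1 \le 2u^\sharp_1$ proven inside Theorem~\ref{thm:general_sharp_Lip-dens}. You instead apply~\eqref{eq:pPI} only to the bounded truncations $v$, which are automatically in $L^1_\loc$, and then appeal to the truncation-based hypothesis of Theorem~\ref{thm:general_sharp_Lip-dens} — a cleaner way to avoid the integrability question for $u$ itself, at no extra cost since the same $g\in X$ remains an upper gradient of every truncation. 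Your closing observation about using a genuine upper gradient rather than an $X$-weak one, so as to bypass the $X \emb L^p_\loc$ hypothesis mentioned after Definition~\ref{df:pPI}, is also correct and matches the paper's implicit choice.
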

%
%
Similarly as before, the approximating Lipschitz functions coincide with the approximated Newtonian functions outside of sets of arbitrarily small measure provided that $X \subset L^q_\fm$ for some $q>0$.
%
%
\begin{proof}
Since $\Pcal$ is a $p$-Poincar\'e space, we obtain that $u^\sharp_1(x) \le 2 c_{\PI} M_p g(x)$ whenever $g\in X$ is an upper gradient of $u\in \NX$. Since $\bigl\|\sigma \smash{\chi_{\suplevp{p}{g}{\sigma}}} \bigr\|_X \to 0$ as $\sigma \to \infty$, the fractional sharp maximal function $u^\sharp_1$ satisfies the weak estimate of Theorem~\ref{thm:general_sharp_Lip-dens}, which then yields the desired conclusion.
\end{proof}
The following example shows that the hypotheses that $\Pcal$ supports a $p$-Poincar\'{e} inequality and that $M_p$ obeys the weak estimate are in fact more restrictive than posing an analogous assumption that $u^\sharp_1$ satisfies the weak estimate of Theorem~\ref{thm:general_sharp_Lip-dens} for every $u\in \NX$. 
\begin{exa}
Consider the bow-tie in ($\Rbb^n$, $dx$), i.e., let
\[
  \Pcal = \bigl\{(x_1, x_2, \ldots, x_n) \in \Rbb^n: x_i x_j\ge 0 \mbox{ for all } i,j = 1,\ldots,n\bigr\}.
\]
Let $X=L^q(\Pcal)$ for some $q\in[1, \infty)$. In fact, we are revisiting Bj\"{o}rn, Bj\"{o}rn, and Shanmugalingam~\cite[Example 5.2]{BjoBjoSha}, where other methods were used to show that Lipschitz functions are dense in $\NX$ even though $\Pcal$ is a $p$-Poincar\'{e} space if and only if $p>n$ (see also~\cite[Example A.23]{BjoBjo}).

Theorem~\ref{thm:general_Lip-dens} yields merely that Lipschitz functions are dense in $N^{1,q}\coloneq N^1L^q$ for $q>n$. We will show that the hypotheses of Theorem~\ref{thm:general_sharp_Lip-dens} are fulfilled for every $u\in N^{1,q}$ with $q\in[1, n)$ as well, yielding density of Lipschitz functions in $N^{1,q}$ for every $q\in[1, \infty)\setminus\{ n \}$.

Let $q\in [1, n)$. We can split $\Pcal = \Pcal^+ \cup \Pcal^-$, where
\[
  \Pcal^+ = \bigl\{ x\in\Rbb^n: x_j \ge 0,\  j=1,\ldots,n\bigr\} \quad \mbox{and} \quad \Pcal^- = \bigl\{ x\in\Rbb^n: x_j \le 0,\ j=1,\ldots,n \bigr\}.
\]
Both $\Pcal^+$ and $\Pcal^-$ support a $1$-Poincar\'e inequality, e.g., by~\cite[Example 5.6]{BjoBjo}. Let $v$ be a truncation of $u \in N^{1,q}$ and let $g\in L^q$ be an upper gradient of $u$ and thus of $v$. Let $x \in \Pcal$ and $B = B(x,r)$. Then,
\[
  \fint_{B} |v - v_B| \,d\mu \lesssim
    r \fint_{B} g\,d\mu \le r M_1 g(x)\quad\mbox{if }r\le |x|.
\]
Suppose now that $r > |x|$ and $x\in \Pcal^+$. By the triangle inequality, we obtain that
\begin{multline*}
  \fint_{B} |v - v_B| \,d\mu  \lesssim \fint_B |v-v_{B\cap \Pcal^+}|\,d\mu \\
   \le \fint_{B\cap \Pcal^+} |v-v_{B\cap \Pcal^+}|\,d\mu + \fint_{B\cap \Pcal^-}|v-v_{B\cap \Pcal^+}|\,d\mu 
  \lesssim r \fint_{B\cap \Pcal^+} g\,d\mu + \|v\|_{L^\infty} \,.
\end{multline*}
Hence, $v^\sharp_1(x) \lesssim M_1 g(x) + \|v\|_{L^\infty} / |x|$ whenever $x\in \Pcal^+$. An analogous argument shows that the inequality holds for $x\in\Pcal^-$ as well. Therefore, there is $c>0$ such that
\begin{multline*}
 \suplevshp{v}{c \sigma} \subset \biggl\{ x \in \Pcal: M_1 g(x) + \frac{\|v\|_{L^\infty}}{|x|} > \sigma \biggr\} \\
 \subset
   \biggl\{ x \in \Pcal: M_1 g(x) > \frac{\sigma}{2} \biggr\}  \cup
	 \biggl\{ x \in \Pcal: \frac{\|v\|_{L^\infty}}{|x|} > \frac{\sigma}{2} \biggr\} = \Suplevp{1}{g}{\frac{\sigma}{2\vphantom{\|_L}}} \cup \Suplev{h}{\frac{\sigma}{2\|v\|_{L^\infty}}}\,,
\end{multline*}
where $h(x) = 1/|x|$ for $x\in \Pcal$.
The function $M_1 g$ fulfills the needed weak estimate by~\cite[Lemma~3.12 and Theorem~3.13]{BjoBjo} (see also Section~\ref{sec:weaktype} below). The superlevel sets  $\suplev{h}{\tilde\sigma}$ are balls of radius $1/\tilde\sigma$, centered at the origin.
Therefore, $\| \tilde\sigma \chi_{\suplev{h}{\tilde\sigma}} \|_{L^q} \approx \tilde\sigma^{1-n/q} \to 0$ as $\tilde\sigma \to \infty$. Consequently, $\| \sigma \chi_{\suplevshp{v}{\sigma}}\|_{L^q} \to 0$ as $\sigma \to \infty$. Note that the rate of convergence depends on $\|v\|_{L^\infty}$, i.e., on the chosen truncation of $u$. Theorem~\ref{thm:general_sharp_Lip-dens} now gives that $u$ can be approximated in $N^{1,q}$ by Lipschitz functions.

The case $q=n$ is more delicate. In general, we obtain merely that $\bigl\| \sigma \chi_{\suplevshp{v}{\sigma}}\bigr\|_X$ is bounded but does not tend to zero as $\sigma \to \infty$. For example, such a behavior is exhibited by $v(x)\coloneq(\chi_{\Pcal^+}(x)-\dist(B(0,1),x))^+ \in N^{1,n}$. Nevertheless, Lipschitz functions are dense even in $N^{1,n}$, which was shown in~\cite[Example~5.2]{BjoBjoSha}.
\end{exa}
%
%
The following proposition extends known density results in the variable exponent Sobolev and Newtonian spaces on $\Rbb^n$, cf.\@ Diening, Harjulehto, H\"ast\"o and R\r{u}\v{z}i\v{c}ka~\cite[Theorem~9.5.2]{DieHarHasRuz} and Harjulehto, H\"ast\"o and Pere~\cite[Theorem 3.5]{HarHasPer}, respectively. The main difference, when using our approach via the weak type estimate for the maximal operator, is that we allow for $p^- = \essinf_{x\in\Rbb^n} p(x) = 1$.
\begin{pro}
Suppose that $(\Pcal, \mu) = (\Rbb^n, dx)$. Let $X$ be the variable exponent Lebesgue space $L^{p(\cdot)}$ whose norm is given by
\[
  \| u\|_{p(\cdot)} = \inf\biggl\{ \lambda > 0 : \int_{\Rbb^n} \biggl( \frac{|u(x)|}{\lambda}\biggr)^{p(x)}\,dx \le 1\biggr\},
\]
where $p: \Rbb^n \to [1, \infty)$ is measurable. Assume that $p$ is essentially bounded and that $p$ is of class $\Acal$, i.e.,
\[
  \biggl\|\sum_{Q \in \Qcal} \Bigl(\chi_Q \fint_Q |f(x)|\,dx\Bigr) \biggr\|_{p(\cdot)} \lesssim \|f\|_{p(\cdot)}
\]
holds uniformly for all $f\in L^{p(\cdot)}$ and all systems of pairwise disjoint cubes $\Qcal$, cf.\@~\cite[Definition 4.4.6]{DieHarHasRuz}. 
Then, the \emph{Lipschitz truncations}, i.e., bounded Lipschitz functions that coincide with a given function outside of sets of small measure, are dense in $N^{1,p(\cdot)}(\Rbb^n)$.
\end{pro}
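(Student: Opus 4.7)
The plan is to apply Theorem~\ref{thm:general_Lip-dens} with exponent $p = 1$, exploiting that $(\Rbb^n, dx)$ supports a $1$-Poincar\'e inequality. This reduces the task to verifying two properties of the base space $X = L^{p(\cdot)}$: absolute continuity of the norm, and the weak estimate $\| \sigma \chi_{\suplevp{1}{v}{\sigma}} \|_{p(\cdot)} \to 0$ as $\sigma \to \infty$ for every $v \in L^{p(\cdot)}$.

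Absolute continuity is a standard consequence of essential boundedness of $p$: if $\{E_n\}$ decreases with $\meas{\bigcap_n E_n} = 0$ and $u \in L^{p(\cdot)}$, then for any $\lambda > 0$ with $\int_{\Rbb^n} |u/\lambda|^{p(x)}\,dx < \infty$ the integrand $|u/\lambda|^{p(x)} \chi_{E_n}(x)$ is dominated by an $L^1$ function and vanishes a.e., so classical dominated convergence applied to the modular yields $\|u \chi_{E_n}\|_{p(\cdot)} \to 0$.

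For the weak estimate I exploit the fact, built into the very definition of class $\Acal$, that, via a Calder\'on--Zygmund/Whitney decomposition of the superlevel sets of $M_1 v$, one constructs a pairwise disjoint system of cubes $\Qcal_v$ satisfying $M_1 v \lesssim \sum_{Q \in \Qcal_v} \chi_Q \fint_Q |v|$ pointwise. The class $\Acal$ hypothesis then upgrades this to $\|M_1 v\|_{p(\cdot)} \lesssim \|v\|_{p(\cdot)}$, placing $M_1 v$ in $L^{p(\cdot)}$; in particular $M_1 v$ is finite a.e., so $\meas{\bigcap_\sigma \suplev{M_1 v}{\sigma}} = 0$, and the pointwise bound $\sigma \chi_{\suplev{M_1 v}{\sigma}} \le M_1 v \chi_{\suplev{M_1 v}{\sigma}}$ combined with the absolute continuity established above gives the required vanishing.

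With both hypotheses verified, Theorem~\ref{thm:general_Lip-dens} yields density of Lipschitz functions in $N^{1,p(\cdot)}(\Rbb^n)$. The truncation refinement --- that the approximating Lipschitz function may be taken bounded and equal to the original function off a set of arbitrarily small measure --- follows from the moreover part of Theorem~\ref{thm:Haj_Lip-dens}, invoked through the proofs of Theorems~\ref{thm:general_sharp_Lip-dens} and~\ref{thm:general_Lip-dens}, provided $L^{p(\cdot)} \subset L^1_\fm$. This inclusion in turn follows from H\"older's inequality for variable exponents: for any set $E$ of finite Lebesgue measure, $\|\chi_E\|_{p'(\cdot)} < \infty$ since $p^+ < \infty$, so $\|u \chi_E\|_{L^1} \lesssim \|u\|_{p(\cdot)} \|\chi_E\|_{p'(\cdot)}$. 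The main technical point I anticipate is making explicit the Calder\'on--Zygmund extraction that turns the class $\Acal$ hypothesis into pointwise control of $M_1 v$; once that link is pinned down, the rest is a direct appeal to the already-established density theorem.
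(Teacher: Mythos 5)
Your overall strategy --- apply Theorem~\ref{thm:general_Lip-dens} with $p=1$, verify absolute continuity, verify the weak estimate --- matches the paper. But the step by which you produce the weak estimate contains a genuine gap.

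You claim that a Calder\'on--Zygmund decomposition yields a \emph{single} pairwise disjoint family $\Qcal_v$ with $M_1 v \lesssim \sum_{Q\in\Qcal_v} \chi_Q \fint_Q |v|$ pointwise, and that class $\Acal$ then gives $\|M_1 v\|_{p(\cdot)} \lesssim \|v\|_{p(\cdot)}$. Neither part holds. A CZ decomposition at level $\sigma$ produces cubes with $\fint_Q |v|\approx\sigma$; the resulting sum is $\approx\sigma$ on those cubes and thus cannot dominate $M_1 v$, which is unbounded inside them. More fundamentally, the strong bound $M_1: L^{p(\cdot)} \to L^{p(\cdot)}$ is \emph{false} in the generality of this proposition: the hypotheses deliberately allow $p^- = \essinf p = 1$ (this is the stated point of the proposition, extending \cite{HarHasPer} and \cite{DieHarHasRuz}), and when $p(\cdot)\equiv 1$ near a point, $M_1$ already fails to be bounded on $L^1$ there. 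Class $\Acal$ gives only the weak type estimate $\sup_{\sigma>0}\bigl\|\sigma\chi_{\suplevp{1}{f}{\sigma}}\bigr\|_{p(\cdot)}\lesssim\|f\|_{p(\cdot)}$ (this is \cite[Theorem~4.4.10]{DieHarHasRuz}), which does not place $M_1 v$ in $L^{p(\cdot)}$, so your subsequent appeal to absolute continuity of the norm applied to $M_1 v$ has no footing.

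The correct upgrade from the weak type bound to vanishing as $\sigma\to\infty$ is the part you are missing. Fix $f\in L^{p(\cdot)}$ and set $f_\sigma = f\chi_{\suplev{f}{\sigma/2}}$. Since $M_1 f \le M_1 f_\sigma + \sigma/2$, one has $\suplevp{1}{f}{\sigma}\subset\suplevp{1}{f_\sigma}{\sigma/2}$, whence the weak type bound applied to $f_\sigma$ gives $\bigl\|\sigma\chi_{\suplevp{1}{f}{\sigma}}\bigr\|_{p(\cdot)}\lesssim\|f_\sigma\|_{p(\cdot)} = \|f\chi_{\suplev{f}{\sigma/2}}\|_{p(\cdot)}$, and the last quantity tends to $0$ by the absolute continuity of the norm applied to $f$ itself (since $\measl{\bigcap_{\sigma>0}\suplev{f}{\sigma/2}}=0$). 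With this substitution your proof aligns with the paper's.
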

Theorem 4.4.8 of~\cite{DieHarHasRuz} yields that $p$ is in particular of class $\Acal$ if $p$ is globally log-H\"older continuous, i.e., if $|p(x) - p(y)| \lesssim -1/\log(|x-y|)$ whenever $|x-y|<1/2$ and if there is $p_\infty \in [1,\infty)$ such that $|p(x) - p_\infty| \lesssim 1/\log (e+|x|)$ for all $x\in\Rbb^n$.
\begin{proof}
The space $\Rbb^n$ with the Lebesgue $n$-dimensional measure supports a $1$-Poincar\'{e} inequality. By~\cite[Theorem 3.4.1]{DieHarHasRuz}, the $L^{p(\cdot)}$ norm is absolutely continuous if and only if $p$ is essentially bounded. It is also shown in~\cite[Theorem 4.4.10]{DieHarHasRuz}, that if $p$ is of class $\Acal$, then the maximal operator $M_1$ is of weak type $(p(\cdot),p(\cdot))$, i.e.,
\[
  \sup_{\sigma>0} \Bigl\| \sigma \chi_{\suplevp{1}{f}{\sigma}}\Bigr\|_{p(\cdot)} \lesssim \|f\|_{p(\cdot)},
\]
where $\suplevp{1}{f}{\sigma} = \{x\in\Rbb^n: M_1f(x) > \sigma\}$ as before. It remains to show that in fact $\Bigl\| \sigma \chi_{\suplevp{1}{f}{\sigma}}\Bigr\|_{p(\cdot)} \to 0$ as $\sigma \to \infty$. Let $f\in L^{p(\cdot)}$ be fixed and then define $f_\sigma = f \chi_{\suplev{f}{\sigma/2}}$ for $\sigma > 0$. Then, $M_1f \le M_1f_\sigma + \sigma/2$ whence $\suplevp{1}{f}{\sigma} \subset \suplevp{1}{f_\sigma}{\sigma/2}$. Consequently,
\[
  \Bigl\| \sigma \chi_{\suplevp{1}{f}{\sigma}}\Bigr\|_{p(\cdot)} \le \Bigl\| \sigma \chi_{\suplevp{1}{f_\sigma}{\sigma/2}}\Bigr\|_{p(\cdot)} \lesssim \| f_\sigma \|_{p(\cdot)} = \bigl\| f \chi_{\suplev{f}{\sigma/2}} \bigr\|_{p(\cdot)} \to 0\quad\mbox{as }\sigma \to \infty.
\]
In this estimate, we have used that $\measl{\bigcap_{\sigma>0} \suplev{f}{\sigma/2}}=0$ so that the absolute continuity of the norm yields zero as the limit.
Theorem~\ref{thm:general_Lip-dens} and its proof give the desired conclusion of density of Lipschitz functions in $N^{1,p(\cdot)}(\Rbb^n)$.
\end{proof}
%
%
%
%
\section{Rearrangement-invariant spaces}
\label{sec:rispaces}
In order to be able to study boundedness of the maximal operators $M_p$, some structure of the function space $X$ needs to be known. In the current paper, we discuss a rather wide class of function spaces where the function norm is, roughly speaking, invariant under measure-preserving transformations. The setting of these so-called \ri spaces includes among others the Lebesgue $L^p$ spaces, the Orlicz $L^\Psi$ spaces, and the Lorentz $L^{p,q}$ spaces.

A quasi-normed function lattice $X = X(\Pcal, \mu)$ is \emph{rearrangement-invariant} if it satisfies the condition
\begin{enumerate}
  \renewcommand{\theenumi}{(RI)}
  \item \label{df:RI}
  if $u$ and $v$ are \emph{equimeasurable}, i.e.,
  \[
    \meas{\{x\in \Pcal: u(x) > t\}} = \meas{\{x\in \Pcal: v(x) > t\}} \quad\mbox{for all } t\ge0,
  \]
  then $\|u\|_X = \|v\|_X$.
\end{enumerate}
We say that $X$ is an \emph{\ri space} if it is a rearrangement-invariant Banach function space. In other words, $X$ satisfies not only~\ref{df:qBFL.initial}--\ref{df:BFL.locL1} with the modulus of concavity $\cconc = 1$, but also~\ref{df:RI}.

It is easy to show that if $X \emb Y_\fm$, where both $X$ and $Y$ are \ri spaces over $(\Pcal, \mu)$, then the constant $c_E\ge 0$ in the embedding inequality $\| u \chi_E\|_Y \le c_E \|u\|_X$ actually depends only on $\meas{E}$ for all measurable sets $E \subset \Pcal$ of finite measure.

For $f \in \Mcal(\Pcal, \mu)$, we define its \emph{distribution function $\mu_f$} by
\[
  \mu_f(t) = \meas{\{x\in \Pcal: |f(x)| > t\}}, \quad t\in[0, \infty).
\]
%
%
Furthermore, we define the \emph{decreasing rearrangement $f^*$} of $f$ as the right-continuous generalized inverse function of $\mu_f$, i.e.,
\[
  f^*(t) = \inf\{ s\ge 0: \mu_f(s) \le t\}, \quad t\in[0, \infty).
\]
%
%
The Cavalieri principle implies that $\|f\|_{L^1(\Pcal, \mu)} = \|\mu_f \|_{L^1(\Rbb^+, \lambda^1)} = \|f^*\|_{L^1(\Rbb^+, \lambda^1)}$. The \emph{elementary maximal function} $f^{**}$ of $f$ is given by
\[
  f^{**}(t) = \fint_0^t f^*(s)\,ds, \quad t\in \Rbb^+.
\]

For a measure space $(\Rcal, \nu)$, a function $u \in \Mcal(\Rcal, \nu)$, and $t \in [0, \nu(\Rcal))$, we can find a measurable ``superlevel'' set $A \subset \Rcal$ such that $\nu(A) = t$ and $|u(x)| \ge |u(y)|$ whenever $x\in A$ and $y\in \Rcal \setminus A$.
In general, such a set $A$ is not defined uniquely by these conditions. Hence, we define $\suplevr{u}{t}$ as the family of all measurable sets $A$ with $\nu(A) = t$ that obey
\begin{equation}
  \label{eq:df-suplevr}
	\{x \in \Rcal: |u(x)| > u^*(t)\} \subset A \subset \{x \in \Rcal: |u(x)| \ge u^*(t)\}.
\end{equation}
Depending on the context, we will use either $(\Rcal, \nu) = (\Rbb^+, \lambda^1)$ or $(\Rcal, \nu) = (\Pcal, \mu)$.
%
%
\begin{df}
Given a quasi-normed rearrangement-invariant function lattice $X$, we define the \emph{fundamental function} of $X$ as 
\[
  \phi_X(t) =
    \begin{cases}
      \|\chi_{E_t}\|_X, & t\in [0, \meas{\Pcal}),\\
      \|1\|_X,          & t\in[\meas{\Pcal}, \infty),
    \end{cases}
\]
where $E_t \subset \Pcal$ is an arbitrary measurable set with $\meas{E_t} = t$.
\end{df}
%
%
The purpose of defining $\phi_X$ beyond $\meas{\Pcal}$ is merely for the sake of convenience, which will allow us to skip the distinction of the exact (possibly infinite) value of $\meas{\Pcal}$ in the coming claims and proofs. We will simply write $\phi$ instead of $\phi_X$ whenever any confusion of function spaces is unlikely to arise. Different spaces may very well have the same fundamental function, which is seen in the example below.
%
%
\begin{exa}
(a) For the Lebesgue $L^p$ spaces, $1\le p<\infty$, we obtain that $\phi(t) = t^{1/p}$ for $t<\meas{\Pcal}$. It is also easy to see that $\phi_{L^\infty} = \chi_{(0, \infty)}$.

(b) The \emph{Lorentz spaces} $L^{p,q}(\Pcal)$ and $L^{p,\infty}(\Pcal)$ for $1\le p, q < \infty$, whose respective (quasi)norms are defined by
\[
   \| u \|_{L^{p,q}} = \biggl( \frac{q}{p} \int_0^\infty (u^*(t) t^{1/p})^q  \frac{dt}{t} \biggr)^{1/q}\quad\mbox{and}\quad \| u \|_{L^{p,\infty}} = \sup_{t > 0} u^*(t) t^{1/p},
\]
have the fundamental function $\phi(t) = t^{1/p}$ for $t<\meas{\Pcal}$.

(c) The fundamental functions of the grand and small Lebesgue spaces, which arise in the extrapolation theory, are for $t$ near zero estimated by
\[
  \phi_{L^{p)}}(t) \approx \frac{t^{1/p}}{|{\log t}|} \quad\mbox{and}\quad \phi_{L^{(p}}(t) \approx t^{1/p} |{\log t}|,
\]
which was established by Lang and Pick, see Capone and Fiorenza~\cite{CapFio}.

(d) The Orlicz spaces based on an $N$-function $\Psi$ with the Luxemburg norm
\[
  \| u \|_{L^\Psi} = \inf \biggl\{\lambda>0: \int_\Pcal \Psi\biggl(\frac{|u(x)|}{\lambda}\biggr)\,d\mu(x) \le 1\biggr\}
\]
have the fundamental function $\phi(t) = 1/\Psi^{-1}(1/t)$ for $0<t< \meas{\Pcal}$.
\end{exa}
%
%
A function $f: [0, \infty) \to [0, \infty)$ is called \emph{quasi-concave on $[0, R)$} for some $R>0$, if it satisfies:
\begin{itemize}
	\item $f(0) = 0 < f(t)$ for $t\in (0, R)$,
	\item $f(t)$ is increasing for $t\in[0, R)$,
	\item $f(t) / t$ is decreasing for $t\in(0, R)$.
\end{itemize}
If $R=\infty$, we say simply that $f$ is \emph{quasi-concave}.

Note that a function $f$ that is quasi-concave on $[0,R)$ for some $R>0$ is Lipschitz (and hence absolutely continuous) on $[\delta, R)$ for every $\delta > 0$. The Lipschitz constant is at most $f(\delta)/\delta$ then. Furthermore, there exists a concave function $\tilde{f}$ such that $\tilde{f}/2 \le f \le \tilde{f}$ on $[0,R)$, cf.\@~\cite[Proposition II.5.10]{BenSha}.

If a function $f: [0, \infty) \to [0, \infty)$ is increasing and concave on $[0, R)$ and if $f(0) = 0 < f(t)$ for all $t\in (0,R)$, then $f$ is quasi-concave on $[0, R)$ since
\begin{equation}
  \label{eq:concave-quasiconcave}
  \frac{f(t)}{t} = \frac{f(t) - f(0)}{t-0} \le \frac{f(s) - f(0)}{s-0} = \frac{f(s)}{s}\quad\mbox{for } 0<s<t<R.
\end{equation}
It is shown in~\cite[Corollary II.5.3]{BenSha} that the fundamental function $\phi$ of an \ri space $X$ is quasi-concave. By~\cite[Proposition II.5.11]{BenSha}, every \ri space can be equivalently renormed so that the fundamental function is concave. If $X$ has an absolutely continuous norm, then $\phi(0\limplus) \coloneq \lim_{t\to 0\limplus} \phi(t) = 0$. Note however that the converse does not hold true in general. For example, the weak-$L^p$ spaces (i.e., $L^{p,\infty}$) satisfy $\phi(0\limplus) = 0$ if $p<\infty$ even though their quasi-norm lacks the~\ref{df:AC} property.
%
%
\begin{df}
For a quasi-concave function $\phi$, we define the \emph{(classical) Lorentz space} $\Lambda_\phi^q$, where $q\in [1, \infty)$, the \emph{Marcinkiewicz space} $M_\phi$ and the \emph{weak Marcinkiewicz space} $M^*_\phi$ by their respective (quasi)norms:
\begin{align*}
  \|u\|_{\Lambda^q_\phi} & = \biggl( \int_0^\infty (u^*(t) \phi(t))^q \frac{dt}{t} \biggr)^{1/q}, \\
  \|u\|_{M_\phi} &= \sup_{t>0} u^{**}(t) \phi(t),\\
  \|u\|_{M^*_\phi} &= \sup_{t>0} u^{*}(t) \phi(t).
\end{align*}
If $\phi$ is an increasing concave function, we define the \emph{Lorentz space} $\Lambda_\phi$ via its norm
\[
  \|u\|_{\Lambda_\phi} = \int_{[0, \infty)} u^*(t)\,d\phi(t) = \phi(0\limplus) \|u\|_{L^\infty} +  \int_0^\infty u^*(t) \phi'(t) \,dt,
\]
where $d\phi(t) = \phi'(t)\,dt$ a.e.\@ on $\Rbb^+$ due to the absolute continuity of $\phi$.

Given an \ri space $X$ with fundamental function $\phi$ (as long as $X$ is considered renormed so that $\phi$ is concave if needed), we write $\Lambda(X)$, $\Lambda^q(X)$, $M(X)$, and $M^*(X)$ instead of $\Lambda_\phi$, $\Lambda^q_\phi$, $M_\phi$, and $M^*_\phi$, respectively.
\end{df}
%
%
Neither the notation, nor the naming of these spaces is unified in the literature. Both $\Lambda_\phi$ and $M_\phi$ are sometimes called Lorentz spaces. Both $M_\phi$ and $M^*_\phi$ may very well be called weak Lorentz or just Marcinkiewicz spaces. 
For instance, our $\Lambda_\phi$ is denoted by
\[
  \Lambda(w, 1), \quad \Lambda_1(w), \quad \Lambda^1(w), \quad \Lambda_\phi, \quad \mbox{and}\quad L(\widetilde{w}, 1)
\]
in Lorentz~\cite{Lor}, Sawyer~\cite{Saw}, Cwikel, Kami\'{n}ska, Maligranda, and Pick~\cite{CwiKamMalPic}, Bennett and Sharpley~\cite{BenSha}, and Sparr~\cite{Spa}, respectively, where $w(t) = \phi'(t)$ and $\widetilde{w}(t) = t \phi'(t)$ for $t>0$.
Furthermore, our $M_\phi$ is denoted by $\Lambda^*(\psi', 1)$, $\Gamma^{1, \infty}(w)$, and $M_\phi$ in~\cite{Lor},~\cite{CwiKamMalPic}, and~\cite{BenSha}, respectively, where $\psi$ is the associated fundamental function of $\phi$, i.e., $\psi(t) = t/\phi(t)$.
The space $\Lambda^q_\phi$ has been studied in~\cite{Lor},~\cite{CwiKamMalPic}, and~\cite{Spa} using the notations $\Lambda(w_q, q)$, $\Lambda^q(w_q)$, and $L(\phi, q)$, respectively, where $w_q(t) = \phi(t)^q/t$.
Finally, our $M^*_\phi$ appears in~\cite{Spa} and~\cite{CwiKamMalPic} as $L(\phi, \infty)$ and $\Lambda^{1, \infty}(w)$, respectively. Besides, the notation $M^*(X)$ can be found in~\cite{BenSha}. The interested reader may consult~\cite{CwiKamMalPic}, where various references on Lorentz and Lorentz-type spaces are provided.
%
%
\begin{exa}
Focusing on the Lebesgue spaces, we can see that
\begin{enumerate}
	\item $\Lambda(L^1) = M(L^1) = L^1$, $\Lambda^q(L^1) = L^{1,q}$, and $M^*(L^1) = L^{1,\infty}$;
	\item $\Lambda(L^p) = L^{p,1}$, $\Lambda^q(L^p) = L^{p,q}$, and $M(L^p) = M^*(L^p) = L^{p,\infty}$, whenever $p\in (1, \infty)$,
	\item $\Lambda(L^\infty) = M(L^\infty) = M^*(L^\infty) = L^\infty$, whereas $\Lambda^q(L^\infty) = \{0\}$.
\end{enumerate}
\end{exa}

If $X$ is an \ri space, then $\Lambda(X)$ and $M(X)$ are \ri spaces by~\cite[Proposition II.5.8, Theorem II.5.13]{BenSha}. In general, $M^*(X)$ is merely a rearrangement-invariant quasi-Banach function lattice.  They all have the same fundamental function as $X$ and
\begin{equation}
  \label{eq:LambdaX-X-MX-embed}
  \Lambda(X) \emb X \emb M(X) \emb M^*(X)
\end{equation}
with the embedding norms equal to $1$.

If $\phi^q$ is quasi-concave, then $\Lambda^q_\phi$ is an \ri space. The triangle inequality in this case follows from Lorentz~\cite[Theorem 1]{Lor}. Otherwise, the space may be merely quasi-normed (see Sparr~\cite[Theorem 1.2]{Spa}). The fundamental function of $\Lambda^q_\phi$ is different from $\phi$ unless $\phi(t)=t^{1/q}$ for $0\le t<\meas{\Pcal}$, in which case $\Lambda^q_\phi = L^q$. It is however comparable to $\phi$ provided that $\phi'(t) \approx \phi(t)/t$ for $0<t<\meas{\Pcal}$, which occurs, e.g., if $\phi^\alpha$ is a convex function or, more generally, if $\phi(t)^\alpha/t$ is increasing on $(0, \meas{\Pcal})$ for some $\alpha \ge 1$. 

The classical Lorentz spaces associated with the same quasi-concave function $\phi$ are embedded into each other relative to the exponent. The precise statement is given in the following lemma, which in fact follows from Stepanov~\cite[Proposition~1]{Ste}. In order to check the hypotheses of that proposition, a similar calculation as in the proof below is needed (with $v^*=\chi_{(0,a)}$ for arbitrary $a>0$). Therefore, we present a simple direct proof of the embedding, which is an elementary modification of the proofs available in the literature, where only $\phi(t) = t^\alpha$ with some $\alpha\in[0,\infty)$ is considered, cf.\@~\cite[Proposition IV.4.2]{BenSha}.
%
%
\begin{lem}
\label{lem:lorentz-embedding}
Let $\phi$ be a quasi-concave function and suppose that $1\le q < p < \infty$. Then, $\Lambda^q_\phi \emb \Lambda^p_\phi$ and the norm of the embedding can be estimated independently of $\phi$.
\end{lem}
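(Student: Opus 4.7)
The idea is to reduce everything to a pointwise monotonicity property of the integrand $t\mapsto u^*(t)\phi(t)$, and then use a standard interpolation between $L^q(dt/t)$ and $L^\infty$ via H\"older's inequality.

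Set $f(t)\coloneq u^*(t)\phi(t)$ for $t>0$. The first (and crucial) observation is that $t\mapsto f(t)/t$ is a decreasing function. Indeed, $u^*$ is decreasing by construction, and $t\mapsto \phi(t)/t$ is decreasing because $\phi$ is quasi-concave, so the product of these two non-negative decreasing functions is itself decreasing. Hence for all $0<s\le t$,
\[
  \frac{f(s)}{s}\ge\frac{f(t)}{t},\qquad\text{i.e.,}\qquad f(s)\ge \frac{s}{t}f(t).
\]

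From this I would derive the essential $L^\infty$ estimate
\[
  \|f\|_{L^\infty(\Rbb^+)}\le q^{1/q}\,\|u\|_{\Lambda^q_\phi}.
\]
To see this, fix $t>0$ and integrate the $q$-th power of the pointwise bound $f(s)\ge (s/t)f(t)$ against $ds/s$ on $(0,t)$:
\[
  \|u\|_{\Lambda^q_\phi}^q \ge \int_0^t f(s)^q\,\frac{ds}{s} \ge \frac{f(t)^q}{t^q}\int_0^t s^{q-1}\,ds = \frac{f(t)^q}{q}.
\]

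To finish, I would use the elementary inequality $f(t)^p = f(t)^{p-q}\,f(t)^q \le \|f\|_\infty^{p-q}\,f(t)^q$ (this is where $q<p$ enters) and integrate against $dt/t$:
\[
  \|u\|_{\Lambda^p_\phi}^p = \int_0^\infty f(t)^p\,\frac{dt}{t} \le \|f\|_\infty^{p-q}\int_0^\infty f(t)^q\,\frac{dt}{t} \le q^{(p-q)/q}\,\|u\|_{\Lambda^q_\phi}^{p-q}\,\|u\|_{\Lambda^q_\phi}^q.
\]
Taking $p$-th roots yields
\[
  \|u\|_{\Lambda^p_\phi}\le q^{(p-q)/(pq)}\,\|u\|_{\Lambda^q_\phi},
\]
which is the claimed embedding with a constant depending only on $p$ and $q$, not on $\phi$.

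There is no real obstacle; the only subtlety is recognizing that quasi-concavity of $\phi$ is exactly what makes $f(t)/t$ decreasing, which converts an otherwise false embedding (recall that $L^q\not\subset L^p$ for $q<p$ on a non-finite measure space) into a valid one by providing a pointwise bound of $f$ by a multiple of the $\Lambda^q_\phi$ norm.
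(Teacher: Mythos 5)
Your proof is correct and follows essentially the same two-step strategy as the paper: first bound $\sup_t u^*(t)\phi(t)$ (the $M^*_\phi$-norm) by $q^{1/q}\|u\|_{\Lambda^q_\phi}$, then factor $f^p=f^{p-q}f^q$ and integrate, arriving at the same constant $q^{1/q-1/p}=q^{(p-q)/(pq)}$. The only variation is in step one, where the paper writes $\phi(t)^q=\int_0^t q\phi(s)^{q-1}\phi'(s)\,ds$ and then invokes $\phi'(s)\le\phi(s)/s$, whereas you combine the monotonicity of $u^*$ and of $\phi(s)/s$ into the single observation that $u^*(s)\phi(s)/s$ is decreasing and integrate $s^{q-1}$ directly; this avoids any appeal to the derivative (and hence the absolute continuity) of $\phi$ and is a touch cleaner, but it is the same underlying idea.
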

%
%
\begin{proof}
Let $v \in \Lambda^q_\phi$. First, we show that $v \in M^*_{\phi}$ using the relation $\phi'(s) \le \phi(s)/s$ for $s>0$, which follows from the quasi-concavity of $\phi$,
\begin{align*}
   &\|v\|_{M^*_\phi} = \biggl(\sup_{t>0} v^*(t)^q \phi(t)^q\biggr)^{1/q} = \biggl(\sup_{t>0}  v^*(t)^q \int_0^t q \phi(s)^{q} \frac{\phi'(s)}{\phi(s)}\,ds\biggr)^{1/q} \\
    &\quad \le c_q \biggl(\sup_{t>0} v^*(t)^q \int_0^t \phi(s)^q \frac{ds}{s}\biggr)^{1/q} \le c_q \biggl(\sup_{t>0}   \int_0^t (v^*(s)\phi(s))^q \frac{ds}{s}\biggr)^{1/q} = c_q\|v\|_{\Lambda_\phi^q},
\end{align*}
where $c_q = q^{1/q}$. Now, we can estimate
\begin{align*}
 \|v\|_{\Lambda^p_\phi}^p & = \int_0^\infty (v^*(t) \phi(t))^p \frac{dt}{t} \le \sup_{t>0} (v^*(t) \phi(t))^{p-q} \int_0^\infty (v^*(t) \phi(t))^q \frac{dt}{t} \\
 & = \|v\|_{M^*_\phi}^{p-q} \|v\|_{\Lambda^q_\phi}^q \le c_q^{p-q} \|v\|_{\Lambda^q_\phi}^p.
\end{align*}
Thus, we obtain the desired inequality $\|v\|_{\Lambda^p_\phi} \le c_{p,q} \|v\|_{\Lambda^q_\phi}$, where $c_{p,q} = q^{1/q - 1/p}$.
\end{proof}
%
%
Given an \ri space $X$ over $(\Pcal, \mu)$, there is an \ri space $\reps{X}$ over $(\Rbb^+, \lambda^1)$, the so-called \emph{representation space} of $X$, such that $\|u\|_X = \|u^*\|_{\reps X}$ for all $u\in\Mcal(\Pcal, \mu)$. Existence of such a space $\reps X$ is established by the \emph{Luxemburg representation theorem} (see~\cite[Theorem II.4.10]{BenSha}). For the sake of uniqueness, $\reps X$ may be chosen such that $\|f\|_{\reps X} = \|f^* \chi_{(0, \meas{\Pcal})}\|_{\reps X}$ for all $f\in \reps X$. Furthermore, $\phi_X = \phi_{\reps X}$, whence $\Lambda(\reps X) = \reps{\Lambda(X)}$, $M(\reps X) = \reps{M(X)}$, and $M^*(\reps X) = \reps{M^*(X)}$.

The next lemma shows another rather unsurprising fact, namely, that the norm of the representation space retains the absolute continuity.
\begin{lem}
If an \ri space $X$ has an absolutely continuous norm, then so does $\reps X$.
\end{lem}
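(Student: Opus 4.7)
The plan is to transfer the absolute continuity of $\|\cdot\|_X$ to $\|\cdot\|_{\reps{X}}$ via a measure-preserving correspondence between $\Pcal$ and $[0, \meas{\Pcal})$, which exists by the non-atomicity and $\sigma$-finiteness of $\mu$.

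Let $f \in \reps{X}$ with $f \ge 0$ (without loss of generality), and let $\{F_n\}_{n=1}^\infty$ be a decreasing sequence of measurable subsets of $\Rbb^+$ with $\lambda^1(\bigcap_n F_n) = 0$. Setting $g_n \coloneq (f \chi_{F_n})^*$, rearrangement-invariance gives $\|f \chi_{F_n}\|_{\reps{X}} = \|g_n\|_{\reps{X}}$. I would first verify that $g_n \le f^*$ pointwise, that $g_n$ is decreasing in $n$, and that $g_n(t) \downarrow 0$ for each $t \in (0, \meas{\Pcal})$. The last point follows from the fact that $\mu_{f \chi_{F_n}}(s) = \lambda^1(F_n \cap \{f > s\})$ is a decreasing sequence of set measures whose pointwise intersection is null, while within the Luxemburg convention $\|\cdot\|_{\reps{X}} = \|{\cdot}^* \chi_{(0, \meas{\Pcal})}\|_{\reps{X}}$ only the values of $g_n$ on $(0, \meas{\Pcal})$ matter for the norm.

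Next I would invoke the classical existence, under the non-atomicity and $\sigma$-finiteness of $\mu$, of a measure-preserving map $\tau \colon \Pcal \to [0, \meas{\Pcal})$ satisfying $\mu(\tau^{-1}(A)) = \lambda^1(A)$ for every measurable $A \subset [0, \meas{\Pcal})$ (see e.g.\@ \cite[Corollary II.7.6]{BenSha}). Put $u \coloneq f^* \circ \tau$ and $u_n \coloneq g_n \circ \tau$. Equimeasurability then yields $u^* = f^* \chi_{(0, \meas{\Pcal})}$ and $u_n^* = g_n \chi_{(0, \meas{\Pcal})}$, so the Luxemburg convention gives $\|u\|_X = \|f\|_{\reps{X}} < \infty$ together with $\|u_n\|_X = \|g_n\|_{\reps{X}}$; in particular both $u$ and $u_n$ belong to $X$. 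Pointwise, $0 \le u_n \le u$ since $g_n \le f^*$, and $u_n(x) \downarrow 0$ for every $x \in \Pcal$.

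Finally, the absolute continuity of the norm in $X$ is equivalent to the dominated convergence theorem in $X$ by \cite[Proposition I.3.6]{BenSha}, so $\|u_n\|_X \to 0$, which transports back to $\|f \chi_{F_n}\|_{\reps{X}} = \|u_n\|_X \to 0$, as required. The principal obstacle is justifying the existence of the measure-preserving $\tau$; this is a standard but non-trivial consequence of non-atomicity, and one can alternatively bypass it by a direct layer-cake construction of $u \in X$ with $u^* = f^* \chi_{(0, \meas{\Pcal})}$ together with $u_n \le u$ satisfying $u_n^* = g_n \chi_{(0, \meas{\Pcal})}$, using non-atomicity to carve out, for each superlevel of $f^*$ and each $n$, a set in $\Pcal$ of the prescribed measure nested inside the previous one.
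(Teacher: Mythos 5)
Your route is genuinely different from the paper's. The paper never constructs a global measure-preserving map $\tau\colon \Pcal \to [0, \meas{\Pcal})$; instead it takes a single $u \in X$ with $u^* = f^*$ (via~\cite[Corollary~II.7.8]{BenSha}), splits $f$ into an "inner" part supported on a superlevel set $F$ with $\lambda^1(F) = \meas{B}$ and an "outer" tail on $(0,\meas{\Pcal})\setminus F$, and controls the two pieces separately by $\|u\chi_{G_n}\|_X$ and $\|u\chi_{\Pcal\setminus B}\|_X$, both of which go to zero by~\ref{df:AC} applied to $u$. Your version instead transfers the whole problem to $\Pcal$ at once and invokes the dominated convergence theorem in $X$, which is cleaner conceptually but pays for it by needing the stronger tool $\tau$ (you rightly flag that your fallback "layer-cake" construction of $u$ and $u_n$ is closer to what the paper actually does, since the paper, too, only uses superlevel-set surgery rather than a global isomorphism).

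There is one genuine gap in your argument. You assert that $g_n(t) \downarrow 0$ for each $t\in(0,\meas{\Pcal})$ because $\lambda^1(F_n\cap\{|f|>s\})$ is a decreasing sequence of sets with null intersection. That inference requires $\lambda^1(F_{n_0}\cap\{|f|>s\})<\infty$ for some $n_0$, i.e., continuity of measure from above needs finiteness somewhere in the chain; without it, one can have $g_n(t)\equiv\eta>0$ for all $n$ (think of $f\equiv 1$ in $L^1+L^\infty$ with $F_n = (n,\infty)$). What saves you is that the hypothesis that $X$ has absolutely continuous norm already forces $\mu_f(s)=\lambda^1(\{|f|>s\})<\infty$ for every $s>0$ and every $f\in\reps{X}$: if $A=\{|f|>s_0\}$ had infinite measure, exhaust $\Pcal$ (or $\Rbb^+$) by sets of finite measure $P_k\upto$, put $E_n=\Pcal\setminus P_n$, and note that $\chi_{A\cap E_n}$ is equimeasurable with $\chi_A\in X$ by rearrangement invariance, so $\|f\chi_{E_n}\|_X\ge s_0\|\chi_A\|_X>0$ does not tend to zero, contradicting~\ref{df:AC}. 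You should insert this observation before claiming $g_n(t)\downarrow 0$; with it in place, and with $\tau$ (or the layer-cake alternative) granted, the rest of your argument — the identification of norms via equimeasurability and the appeal to~\cite[Proposition~I.3.6]{BenSha} — is sound.
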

\begin{proof}
Let $f \in \reps X$. Then, there is a non-negative function $u\in X$ such that $u^* = f^*$ by~\cite[Corollary~II.7.8]{BenSha}.  Let $\{E_n\}_{n=1}^\infty$ be a decreasing sequence of sets in $(0, \meas{\Pcal})$ such that $\measl{\bigcap_{n=1}^\infty E_n} = 0$. Let $\eps > 0$ be arbitrary. Since $\bigcap_{R=1}^\infty (\Pcal \setminus B(z, R)) = \emptyset$ for any fixed $z\in\Pcal$, we can find a ball $B\subset\Pcal$ such that $\|u\chi_{\Pcal \setminus B}\|_X < \eps/2$. Choose $F \in \suplevr{f}{\meas{B}}$ arbitrarily (recall that $\suplevr{f}{t}$ was defined in \eqref{eq:df-suplevr} as the collection of all measurable ``superlevel'' sets of $f$ whose measure is equal to $t\ge0$).

Let $E_n' = E_n \cap F$. For every $n\ge 1$, choose $G_n \in\suplevr{u}{\measl{E'_n}}$ such that $G_n \supset G_{n+1}$. Hence, $\meas{G_n}\to 0$ as $n\to\infty$. Thus, there is $n_0 \ge 1$ such that $\|u \chi_{G_n}\|_X < \eps /2$ for every $n\ge n_0$. For such $n$, we can estimate
\begin{align*}
  \| f \chi_{E_n}\|_{\reps X} & 
  \le \| f \chi_{E'_n}\|_{\reps X} + \| f \chi_{E_n\setminus F}\|_{\reps X} 
  \le \| f \chi_{E'_n}\|_{\reps X} + \| f \chi_{(0, \meas{\Pcal}) \setminus F}\|_{\reps X} \\
  & \le \|f^* \chi_{(0, \measl{E'_n})}\|_{\reps X} + \| f^* \chi_{[\meas{B}, \meas{\Pcal})}\|_{\reps X}
  = \|u \chi_{G_n}\|_X + \| u \chi_{\Pcal \setminus B}\|_{X} < \eps.
  \qedhere
\end{align*}
\end{proof}
%
%
\begin{df}
Given $p\in[1, \infty)$ and a quasi-concave function $\phi$ that is constant on $(\meas{\Pcal}, \infty)$, we define the \emph{Marcinkiewicz-type spaces} $M^p_\phi$ and $M^p_{\phi,\loc}$ by their norms
\begin{align*}
  \|u\|_{M^p_\phi} = \sup_{t>0} M_pu^*(t) \phi(t) \quad \mbox{and}\quad
  \|u\|_{M^p_{\phi, \loc}} = \sup_{0<t< 1} M_pu^*(t) \phi(t).
\end{align*}
If $X$ is an \ri space whose fundamental function is $\phi$, then we write $M^p(X)$ and $M^p_\loc(X)$ instead of $M^p_\phi$ and $M^p_{\phi, \loc}$, respectively.
\end{df}
It is easy to verify that $M^p_\phi \emb M^p_{\phi, \loc} \emb L^p_\fm \emb L^p_\loc$. If $\meas{\Pcal}<\infty$, then $M^p_\phi$ and $M^p_{\phi, \loc}$ coincide. On the other hand, if $\meas{\Pcal}=\infty$, then $M^p_\phi$ is non-trivial if and only if $\phi(t)/t^{1/p}$ is bounded for $t>1$. Observe also that $M^1(X) = M(X)$ by definition. The fundamental function of $M^p_\phi$ dominates $\phi$, whereas it is equal to $\phi$ if and only if $\phi^p$ is quasi-concave. The function $\psi(t)$ defined by \eqref{eq:quasiconv_dom} below equals the fundamental function of $M^p_{\phi, \loc}$ for $t\le 1$.
%
%
Having introduced various rather wide classes of function spaces, we can revisit the question of density of bounded functions, providing several examples where the density fails since the function norm is not absolutely continuous.

The following example shows that bounded functions are not dense in the Marcinkiewicz spaces that lie locally strictly between $L^\infty$ and $L^1$.
%
%
\begin{exa}
Let $X=M_\phi$, where $\phi$ is a quasi-concave function that satisfies
\[
  \lim_{t\to 0\limplus} \phi(t) = 0, \quad \mbox{and} \quad \lim_{t\to 0\limplus} \frac{\phi(t)}{t} = \infty.
\]
Roughly speaking, these conditions on $\phi$ say that $L^\infty \subsetneq X_\fm$ and $X \subsetneq L^1_\fm$. By~\cite[Proposition II.5.10]{BenSha}, there exists a concave function $\psi(t) \approx t/\phi(t)$, $t>0$, since the latter is quasi-concave, which follows from quasi-concavity of $\phi$. Moreover, $\psi$ is increasing and absolutely continuous. Let now $u\ge 0$ be chosen such that $u^*(t)$ is the right-continuous representative of the right derivative $\psi'_+(t)$. Since $\psi(t)/t \approx 1/\phi(t) \to \infty$ as $t\to 0\limplus$, we have that $u^*(0\limplus) = \infty$, whence $u$ is not bounded. Now, we will show that $u\in X$, but no sequence of bounded functions converges to $u$ in $X$. Indeed,
\[
  \| u \|_{X} = \sup_{t>0} u^{**}(t) \phi(t) = \sup_{t>0} \frac{\phi(t)}{t} \int_0^t u^*(s) \,ds = \sup_{t>0} \frac{\phi(t) \psi(t)}{t} \approx 1.
\]
Let $f\in X \cap L^\infty$ be non-negative and let $b\coloneq \|f\|_{L^\infty}$. Now, we choose $a>0$ such that $u^*(t) \ge 2b$ for $t\in(0, a)$. Then,
\begin{align*}
  \| u-f \|_{X} & = \sup_{t>0} (u-f)^{**}(t) \phi(t) \ge \sup_{0<t<a} \frac{\phi(t)}{t} \int_0^t (u-f)^*(s)\,ds \\
  & \ge \sup_{0<t<a} \frac{\phi(t)}{t} \int_0^t (u^*(s) - b)\,ds \ge \sup_{0<t<a} \frac{\phi(t)}{t} \frac{\psi(t)}{2} \approx 1.
\end{align*}
\end{exa}
%
%
In the setting of \ri spaces that contain unbounded functions, the absolute continuity (on sets of finite measure) is actually indispensable for the density of bounded functions.
%
%
\begin{lem}
\label{lem:trunc_dense_locAC}
Let $X$ be an \ri space such that $X\setminus L^\infty \neq \emptyset$. Then, the truncations are dense in $X$ if and only if $\| u \chi_{E_k}\|_X \to 0$ as $k\to\infty$ for every $u\in X$ and every decreasing sequence of sets $\{E_k\}_{k=1}^\infty$ such that $\meas{E_k} \to 0$.
\end{lem}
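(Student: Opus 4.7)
The plan is to handle the two implications separately. The hypothesis on $\|u\chi_{E_k}\|_X$ is a form of absolute continuity tested against sets whose \emph{measure} shrinks to zero, which in the \ri setting lets us bring the fundamental function $\phi$ directly into the estimates.

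For the backward direction, I would fix $u\in X$ and observe the pointwise bound $|u-u_\sigma|\le |u|\chi_{\suplev{u}{\sigma}}$, so by the lattice property
\[
  \|u-u_\sigma\|_X \le \|u\chi_{\suplev{u}{\sigma}}\|_X.
\]
Since $X$ is an \ri Banach function space, axiom~\ref{df:BFL.locL1} gives $X\emb L^1_\fm$, whence $u\in L^1_\fm$; combined with the standing non-atomicity of $\mu$, Lemma~\ref{lem:superlevelsets-to-zero} yields $\meas{\suplev{u}{k}}\to 0$ as $k\to \infty$. Applying the hypothesis to the decreasing sequence $E_k \coloneq \suplev{u}{k}$ gives $\|u-u_k\|_X \to 0$, and monotonicity of $\sigma\mapsto\|u-u_\sigma\|_X$ (immediate from $(|u|-\sigma)^+$ being pointwise decreasing in $\sigma$) upgrades this to $u_\sigma\to u$ as $\sigma\to\infty$.

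The forward direction is where the assumption $X \setminus L^\infty \neq \emptyset$ earns its keep. I expect the main obstacle to be the structural observation
\[
  X \setminus L^\infty \neq \emptyset \quad\Longrightarrow\quad \phi(0\limplus) = 0.
\]
To prove this, fix $v \in X$ and $t>0$, and use $v^*(t)\chi_{[0,t)}\le v^*$ in $\reps{X}$:
\[
  v^*(t)\,\phi(t) = \|v^*(t)\chi_{[0,t)}\|_{\reps{X}} \le \|v^*\|_{\reps{X}} = \|v\|_X.
\]
Letting $t\to 0\limplus$ and recalling that $v^*(0\limplus) = \|v\|_{L^\infty}$ by right-continuity of $v^*$, a positive value $\phi(0\limplus) = c > 0$ would force $\|v\|_{L^\infty} \le \|v\|_X/c < \infty$ for every $v\in X$, contradicting the hypothesis.

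With $\phi(0\limplus) = 0$ in hand, the rest is a routine triangle estimate. Given $u\in X$, a decreasing sequence $\{E_k\}$ with $\meas{E_k}\to 0$, and $\eps>0$, the assumed density of truncations supplies $\sigma>0$ with $\|u-u_\sigma\|_X<\eps/2$. The lattice property and $|u_\sigma|\le\sigma$ then yield
\[
  \|u\chi_{E_k}\|_X \le \|u-u_\sigma\|_X + \sigma\,\|\chi_{E_k}\|_X \le \tfrac{\eps}{2} + \sigma\,\phi(\meas{E_k}),
\]
and taking $k\to\infty$ followed by $\eps\to 0\limplus$ finishes the argument.
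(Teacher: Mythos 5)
Your backward direction is essentially the same as the paper's (Lemma~\ref{lem:superlevelsets-to-zero} plus the lattice property and the hypothesis applied to $E_k=\suplev{u}{k}$). The forward direction, however, takes a genuinely different route. The paper proves the contrapositive: starting from a $v\in X$ and a sequence $\{E_k\}$ witnessing the failure of absolute continuity on sets of finite measure, it explicitly builds an unbounded $u=\tilde u+|v|\in X$ with $\tilde u=\sum_k \chi_{E_k\setminus E_{k+1}}/(k^{3/2}\phi(\meas{E_k}))$ and then shows $\|u-u_n\|_X\ge \|v\chi_{E_{n^2}}\|_X\ge a>0$, so no truncation gets close. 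You instead argue directly: after the (correct and standard) observation that $X\nsubset L^\infty$ forces $\phi(0\limplus)=0$, you split $u\chi_{E_k}=(u-u_\sigma)\chi_{E_k}+u_\sigma\chi_{E_k}$ and estimate $\|u\chi_{E_k}\|_X\le\|u-u_\sigma\|_X+\sigma\phi(\meas{E_k})$, then send $k\to\infty$ and $\eps\to0$. Both arguments are correct, and both hinge on exactly the two same ingredients ($\phi(0\limplus)=0$ and boundedness of truncations); your direct proof is shorter and cleaner, whereas the paper's contrapositive has the side benefit of producing an explicit unbounded Newtonian-space-unfriendly function from any failure of absolute continuity, which dovetails with the surrounding examples constructed in that section.
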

%
%
The latter condition can be understood as \emph{absolute continuity of the norm on sets of finite measure}. If $\meas{\Pcal} < \infty$, then it is equivalent to the absolute continuity of the norm. Otherwise, the absolute continuity is more restrictive since it requires that $\| u \chi_{E_k}\|_X \to 0$ even in the case when $\meas{E_k} = \infty$ for all $k\in\Nbb$ but $\meas{\bigcap_{k=1}^\infty E_k} = 0$, see Example~\ref{exa:AC-vs-ACloc} below.

In order to prove the density of truncations in $X$ under the condition of absolute continuity of the norm on sets of finite measure, we do not really need all the axioms of an \ri space. It would suffice to assume that $X$ is a quasi-Banach function lattice and $X \subset L^p_\fm$ for some $p>0$. Recall that this inclusion with $p=1$ follows by the axiom~\ref{df:BFL.locL1} in the definition of quasi-Banach function spaces (and hence \ri spaces). It is for the converse we make use of the remaining axioms of \ri spaces.
%
%
\begin{proof}
Suppose first that the norm of $X$ is absolutely continuous on sets of finite measure. Let $u\in X$. Recall the notation $\suplev{u}{k} = \{x\in\Pcal: |u(x)|>k\}$. Then, $\meas{\suplev{u}{k}} \to 0$ as $k\to \infty$ by Lemma~\ref{lem:superlevelsets-to-zero} since $X \emb L^1_\fm$ by~\ref{df:BFL.locL1} in the definition of \ri spaces. Let $u_k$ be the truncation of $u$ at the levels $\pm k$. Then, the absolute continuity of the norm of $X$ on sets of finite measure implies that
\[
  \| u - u_k\|_X =\| (|u|-k) \chi_{\suplev{u}{k}}\|_X \le \| u \chi_{\suplev{u}{k}} \|_X \to 0\quad\mbox{as }k\to\infty,
\]
which finishes the proof of the sufficiency.

Suppose next that the norm of $X$ is not absolutely continuous on sets of finite measure, i.e., there exists $v\in X$ and a decreasing sequence of sets $\{E_k\}_{k=1}^\infty$ with $\meas{E_k} \to 0$ such that $\| v \chi_{E_k}\|_X \to a > 0$ as $k\to \infty$. We also have that $\phi(0\limplus) = 0$ since $X$ contains unbounded functions. By passing to a subsequence if needed, we may assume that $\phi(\meas{E_k}) < k^{-2}$. Let $A_k = E_k \setminus E_{k+1}$ for $k\in \Nbb$. Set
\[
  \tilde{u} = \sum_{k=1}^\infty \frac{\chi_{A_k}}{k^{3/2} \phi(\meas{E_k})} \quad\mbox{and}\quad u=\tilde{u}+|v|.
\]
Thus, $u\in X$ while $\tilde{u} > \sqrt{k}$ on $E_k$. Let now $u_n = \min\{u, n\}$ for $n\in\Nbb$. Then, $u - u_n \ge |v|$ on $E_k$ whenever $k \ge n^2$, which yields $\|u-u_n\|_X \ge \| v \chi_{E_{n^2}} \|_X \ge a > 0$ for every $n\in\Nbb$.
\end{proof}
%
%
The next example illustrates that absolute continuity on sets of finite measure really is a more general notion than absolute continuity of the quasi-norm.
%
%
\begin{exa}
\label{exa:AC-vs-ACloc}
The norm of the space $X = (L^1 + L^\infty)(\Pcal)$, where $\meas{\Pcal}=\infty$, is given as $\|u\|_X = \|u^* \chi_{(0,1)}\|_{L^1(\Rbb^+)}$ (cf.\@~\cite[Theorem II.6.4]{BenSha}). It is not absolutely continuous, but merely absolutely continuous on sets of finite measure.

Indeed, let $u \equiv 1 \in X$ and let $E_k= \Pcal \setminus kB$ for $k\in\Nbb$, where $B \subset \Pcal$ is a ball. Then, $\bigcap_{k=1}^\infty E_k = \emptyset$, but $(u\chi_{E_k})^* \equiv 1$, whence $\|u\chi_{E_k}\|_X = 1$ for all $k$. If we however have a decreasing sequence of sets $F_k$ with $\meas{F_k} \to 0$, then $(u \chi_{F_k})^* \chi_{(0,1)} = (u \chi_{F_k})^*$ whenever $\meas{F_k} < 1$, which yields that $\|u \chi_{F_k}\|_X = \|u \chi_{F_k}\|_{L^1} \to 0$ as $k\to\infty$ by the dominated convergence theorem.

Consequently, the truncations are dense in $X$ by Lemma~\ref{lem:trunc_dense_locAC}, whereas this conclusion cannot be drawn from Lemma~\ref{lem:trunc_dense_AC}.
\end{exa}
%
%
We may modify the proof of Corollary~\ref{cor:bdd-dense-in-N1X} similarly as in Lemma~\ref{lem:trunc_dense_locAC} to see that the absolute continuity on sets of finite measure suffices for the density of truncations in $\NX$ provided that $X \subset L^p_\fm$ for some $p>0$.

The Marcinkiewicz spaces, whose norms in general lack the absolute continuity (also on sets of finite measure), will provide us with a setting where we can construct an unbounded Newtonian function whose truncations lie far away from the function. The situation here is somewhat more involved since not only $X$, but also $\NX$ has to contain unbounded functions.
%
%
\begin{exa}
\label{exa:trunc_not_dense}
Let $\phi \in \Ccal^1(\Rbb^+)$ be a quasi-concave function that satisfies
\[
  \lim_{t\to 0\limplus} \phi(t) = 0 \quad \mbox{and} \quad \lim_{t\to 0\limplus} \frac{\phi(t)}{t} = \infty.
\]
Suppose that there are $q>p>1$ such that $\phi(t)^p/t$ is a decreasing function for $t> 0$ whereas $\phi(t)^q/t$ is increasing. In particular, these conditions are satisfied if $\phi^p$ is (quasi)concave whereas $\phi^q$ is convex. The maximal operator $M_1: M_\phi^* \to M_\phi^*$ is bounded under these assumptions, which is shown in Lemma~\ref{lem:fi-quasiconc_weak2weak-glob} below. In view of the Herz--Riesz inequality (Proposition~\ref{pro:herz} below), the Marcinkiewicz spaces $M_\phi$ and $M_\phi^*$ coincide with equivalent (quasi)norms. If, for example, $\phi(t) = t^{1/\alpha}$ with $\alpha > 1$, then we may choose any $p\in (1, \alpha]$ and $q \in (\alpha, \infty)$, and $M_\phi = M^*_\phi = L^{\alpha, \infty}$.

Let $X=M_\phi$ over $\Rbb^n$ endowed with the Euclidean metric and the $n$-dimensional Lebesgue measure, where $n>q$. Then, the truncations are not dense in $\NX$, which is seen by the following argument:

Let $f(t) = t/\phi(t^n)$ for $t>0$. Then, $f$ is decreasing and $f(0) \coloneq f(0\limplus) = \infty$ since
\[
  \lim_{t\to 0\limplus} \frac{t}{\phi(t^n)} = \lim_{t\to 0\limplus} \frac{t^{1/n}}{\phi(t)} = \biggl( \lim_{t\to 0\limplus} \frac{t}{\phi(t)^n} \biggr)^{1/n} = \biggl( \lim_{t\to 0\limplus} \frac{t}{\phi(t)^q} \cdot \frac{1}{\phi(t)^{n-q}}\biggr)^{1/n} = \infty.
\]
Furthermore, $|f'(t)| \approx 1/\phi(t^n)$ since the inequality
\begin{equation}
  \label{eq:phi'-phit/t}
  \frac{\phi(t)}{qt} \le \phi'(t) \le \frac{\phi(t)}{t}\quad\mbox{ for all $t>0$,}
\end{equation}
which holds due to the monotonicity of $\phi(t)/t$ and $\phi(t)^q/t$, leads to
\[
   \frac{-1}{\phi(t^n)} \approx \frac{1-n}{\phi(t^n)} \le \frac{1}{\phi(t^n)} - \frac{n t^n \phi'(t^n)}{\phi(t^n)^2} \le \frac{q-n}{q} \cdot \frac{1}{\phi(t^n)} \approx \frac{-1}{\phi(t^n)} \,.
\]
Let $u(x) = (f(|x|) - f(1))^+$ for $x\in \Rbb^n$. Then, $|\nabla u(x)| = |f'(|x|)|$ for $|x| < 1$. Similarly as in~\cite[Proposition 1.14]{BjoBjo}, we see that $g\coloneq|\nabla u| \chi_{B(0,1)}$ is an upper gradient of $u$. Hence, we can estimate $\|u\|_\NX = \|u\|_X + \|g\|_X \approx \|u\|_{M_\phi^*} + \|g\|_{M_\phi^*}$ while
\begin{align*}
  \|u\|_{M_\phi^*} & = \sup_{t>0} u^*(t) \phi(t) = \sup_{0<r<1} (f(r) - f(1)) \phi(\omega_n r^n) \lesssim \sup_{0<r<1} f(r)\phi(r^n) = 1, \\
  \|g\|_{M_\phi^*} & = \sup_{0<t<\omega_n} (\nabla u)^*(t) \phi(t) = \sup_{0<r<1} |f'(r)| \phi(\omega_n r^n) \approx \sup_{0<r<1} |f'(r)|\phi(r^n) \approx 1,
\end{align*}
where $\omega_n$ is the measure of the $n$-dimensional unit ball. Therefore, $u\in\NX$. We can see that $|\nabla u(x)|$ is also a minimal $X$-weak upper gradient of $u$ by following the argument of~\cite[Proposition A.3]{BjoBjo}, where we replace the representation formula~\cite[Theorem 2.51]{BjoBjo} by~\cite[Theorem~4.10]{Mal2} with $\varphi(t)=t$. Note that the function $\varphi$ in~\cite[Theorem~4.10]{Mal2} is unrelated to $\phi$.
  Let now $u_k(x) = \min\{ u(x), k\}$ for $x\in\Rbb^n$, where $k\in \Nbb$. Then, $g_k \coloneq |\nabla u| \chi_{B(0,r_k)}$, with $r_k = f^{-1}(k + f(1))$, is a minimal $X$-weak upper gradient of $u-u_k$. This provides us with the estimate
\begin{align*}
  \|u-u_k\|_\NX & = \|u-u_k\|_X + \|g_k\|_X \gtrsim \|g_k\|_{M_\phi^*}  = \sup_{0<t<\omega_n r_k^n} (\nabla u)^*(t) \phi(t) \\
  & = \sup_{0<r<r_k} |f'(r)| \phi(\omega_n r^n)
  \approx \sup_{0<r<r_k} |f'(r)|\phi(r^n) \approx 1,
\end{align*}
whence $u \in \NX$ cannot be approximated in $\NX$ by its truncations.
\end{exa}
%
%
%
%
\section{Weak type boundedness of the maximal operator}
\label{sec:weaktype}
The general main result for $p$-Poincar\'e spaces, Theorem~\ref{thm:general_Lip-dens}, relies on the fact that the maximal function $M_pg$ fulfills the weak estimate $\smash{\bigl\|\sigma \chi_{\suplevp{p}{g}{\sigma}}\bigr\|_X} \to 0$ as $\sigma\to\infty$ whenever $g\in X$. Recall that $\suplevp{p}{g}{\sigma}$ denotes the superlevel set of $M_pg$ with level~$\sigma$, i.e., $\suplevp{p}{g}{\sigma} = \{x\in\Pcal: M_p g(x) > \sigma\}$. In this section, we will show that this condition is satisfied in the setting of \ri spaces if $M_p: X \to M^*(X)_\fm$ is bounded. Furthermore, we will establish various sufficient conditions on $X$, and in particular on its fundamental function, that guarantee such boundedness of $M_p$.

The Herz--Riesz inequality is a crucial tool for studying the maximal operators on \ri spaces. It allows us to compare the elementary maximal function, i.e., the maximal function of the rearrangement, with the rearrangement of the maximal function.
%
%
\begin{pro}[Herz--Riesz inequality]
\label{pro:herz}
There are constants $c,c'>0$ such that
\[
  c (M_1 u)^*(t) \le u^{**}(t) \le c' (M_1 u)^*(t),\quad t\in (0, \meas{\Pcal}),
\]
whenever $u\in \Mcal(\Pcal, \mu)$.
\end{pro}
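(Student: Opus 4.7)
My plan is to prove the two inequalities separately by classical real-variable techniques adapted to the doubling metric setting: the lower bound via a splitting argument combined with the weak-$(1,1)$ inequality for $M_1$, and the upper bound via a Calder\'on--Zygmund-type covering.

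For the lower bound $(M_1 u)^*(t) \le C\fcrim u^{**}(t)$, I would decompose $u = u_1 + u_2$ with $u_1 = u\fcrim \chi_{\suplev{u}{u^*(t/2)}}$. Then $\|u_2\|_{L^\infty} \le u^*(t/2)$, so $M_1 u_2 \le u^*(t/2)$ pointwise, while $\|u_1\|_{L^1} \le \int_0^{t/2} u^*(s)\,ds = (t/2)\fcrim u^{**}(t/2)$. The weak-$(1,1)$ boundedness of $M_1$ on doubling spaces (see~\cite[Theorem~III.2.1]{CoiWei}, as noted in the remark following the definition of $M_p$) gives $(t/2)(M_1 u_1)^*(t/2) \lesssim \|u_1\|_{L^1}$, whence $(M_1 u_1)^*(t/2) \lesssim u^{**}(t/2)$. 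Subadditivity of the decreasing rearrangement then yields
\begin{equation*}
  (M_1 u)^*(t) \le (M_1 u_1)^*(t/2) + (M_1 u_2)^*(t/2) \lesssim u^{**}(t/2).
\end{equation*}
Since $s \mapsto s\fcrim u^{**}(s) = \int_0^s u^*(\tau)\,d\tau$ is non-decreasing, one has $u^{**}(t/2) \le 2\fcrim u^{**}(t)$, and this direction is complete.

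For the upper bound $u^{**}(t) \le c'(M_1 u)^*(t)$, fix $\lambda > (M_1 u)^*(t)$, so $\meas{\suplevp{1}{u}{\lambda}} \le t$. Pick $E \in \suplevr{u}{t}$ so that $\int_E |u|\,d\mu = \int_0^t u^*(s)\,ds = t\fcrim u^{**}(t)$. I would then invoke a Calder\'on--Zygmund decomposition at level $\lambda$, available in doubling metric spaces: there exist balls $\{B_i\}$ with bounded overlap such that $\bigcup_i B_i$ covers $\suplev{u}{\lambda}$ up to a $\mu$-null set (by Lebesgue differentiation), $\fint_{B_i}|u|\,d\mu \lesssim \lambda$, and $\sum_i \meas{B_i} \lesssim \meas{\suplevp{1}{u}{\lambda}} \le t$. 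Splitting $E$ along $\suplev{u}{\lambda}$ and using the bound $|u| \le \lambda$ on its complement,
\begin{equation*}
  \int_E |u|\,d\mu \le \sum_i \int_{B_i}|u|\,d\mu + \lambda\fcrim \meas{E} \lesssim \lambda \sum_i \meas{B_i} + \lambda t \lesssim \lambda t.
\end{equation*}
Dividing by $t$ and letting $\lambda \searrow (M_1 u)^*(t)$ delivers the claim.

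The main technical obstacle is the Calder\'on--Zygmund decomposition in the second step, which must simultaneously produce a covering of $\suplevp{1}{u}{\lambda}$, two-sided average control $\fint_{B_i}|u|\,d\mu \approx \lambda$, and a total measure bound $\sum_i \meas{B_i} \lesssim \meas{\suplevp{1}{u}{\lambda}}$, all with constants depending only on $c_{\dbl}$. This is classical (the Euclidean prototype is~\cite[Theorem~III.3.8]{BenSha}, and the proof transfers to the doubling setting via an iterated Vitali/Whitney-type selection), but must be invoked with some care since no additional structure on $\Pcal$ beyond doubling is available. All the remaining manipulations are routine properties of the distribution function and the decreasing rearrangement.
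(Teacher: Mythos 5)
Your proof is correct and takes essentially the same route as the paper: the hard direction $u^{**} \lesssim (M_1 u)^*$ rests in both cases on a Whitney/Calder\'on--Zygmund covering of the open set $\{M_1 u > \lambda\}$ (with the average bound coming from the fact that each enlarged Whitney ball meets the complement, and the measure bound from disjointness of the cores plus doubling), and the easy direction $(M_1u)^* \lesssim u^{**}$ follows from the weak-$(1,1)$ boundedness of $M_1$. The only cosmetic differences are that you prove the Riesz direction from scratch where the paper cites Bennett--Sharpley, whereas for the Herz direction you invoke the Calder\'on--Zygmund decomposition as a package where the paper constructs the Whitney covering explicitly and splits $u = u\chi_G + u\chi_{\Pcal\setminus G}$ rather than splitting the superlevel set $E$.
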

%
%
F.~Riesz~\cite{Rie} used the rising sun lemma to prove the estimate $(M_1u)^* \lesssim u^{**}$ for functions defined on the interval $[0,1]$ in 1932. The inequality in (unweighted) $\Rbb^n$ follows from Wiener~\cite{Wie}. The converse estimate was established much later (1968) and is attributable to Herz~\cite{Her} in one dimension, and to Bennett and Sharpley~\cite{BenSha} in $n$ dimensions. See also Asekritova, Kruglyak, Maligranda, and Persson~\cite{AseKruMalPer}.
%
%
\begin{proof}
If $u \notin L^1_\loc(\Pcal)$, then trivially $(M_1 u)^* = u^{**}\equiv \infty$. We can therefore suppose that $u \in L^1_\loc(\Pcal)$.
The proof of Theorem III.3.8 in Bennett and Sharpley~\cite{BenSha} works verbatim for the left-hand inequality even in the setting of metric measure spaces.

The proof of the right-hand inequality is however somewhat more involved. Let $u \in L^1_\loc(\Pcal)$ and $t>0$ be given. We may suppose that $(M_1 u)^*(t) < \infty$ since the inequality holds trivially otherwise. Let $E = \suplevp{1}{u}{(M_1u)^*(t)}$, i.e., $E=\{x\in\Pcal: M_1 u(x) > (M_1 u)^*(t)\}$. Then, $\meas{E} \le t$, and $E$ is open since $M_1 u$ is lower semicontinuous by~\cite[Lemma 3.12]{BjoBjo}.

Let $\{B_\alpha\}_{\alpha \in I}$ be a Whitney-type covering of $E$ by open balls, i.e., it satisfies:
\begin{enumerate}
  \renewcommand{\theenumi}{(\roman{enumi})}
  \item if $\alpha, \beta \in I$ and $\alpha \neq \beta$, then $B_\alpha \cap B_\beta = \emptyset$,
  \item $E = \bigcup_{\alpha\in I} c_W B_\alpha$,
  \item $4c_W B_\alpha \setminus E \neq \emptyset$ whenever $\alpha \in I$,
\end{enumerate}
where $c_W\ge 1$ is an absolute constant. Existence of such a covering is established, e.g., in Auscher and Bandara~\cite[Theorem 2.3.4]{AusBan}. Since $\Pcal = \spt \mu$ is a Lindel\"{o}f space by~\cite[Proposition 1.6]{BjoBjo}, the index set $I$ may be assumed at most countable. Let now $G = \bigcup_{\alpha\in I} 4c_W B_\alpha$, $v=u \chi_G$, and $w = u-v = u \chi_{\Pcal \setminus G}$. Then, we can estimate
\[
  \|w\|_{L^\infty} \le \| u \chi_{\Pcal \setminus E}\|_{L^\infty} \le \| (M_1u) \chi_{\Pcal \setminus E}\|_{L^\infty} \le (M_1u)^*(t).
\]
For every $\alpha\in I$, there is $x_\alpha \in 4c_W B_\alpha \setminus E$. Hence,
\[
  (M_1u)^*(t) \ge M_1u (x_\alpha) \ge \fint_{4c_WB_\alpha} |u|\,d\mu.
\]
This allows us to estimate
\begin{align*}
  \|v\|_{L^1} & = \int_G |u|\,d\mu \le \sum_{\alpha \in I} \int_{4c_WB_\alpha} |u|\,d\mu \le \sum_{\alpha \in I} (M_1u)^*(t) \meas{4c_WB_\alpha} \\
  & \le \tilde{c} (M_1u)^*(t) \sum_{\alpha \in I}  \meas{B_\alpha} \le \tilde{c} (M_1u)^*(t) \meas{E} \le \tilde{c} t (M_1u)^*(t),
\end{align*}
where $\tilde{c}\ge1$ depends only on the doubling constant of $\mu$ and on $c_W$. Due to the subadditivity of the elementary maximal operator, we obtain that
\begin{align*}
u^{**}(t) & \le v^{**}(t) + w^{**}(t) = \fint_0^t (v^*(s) + w^*(s))\,ds \\
& \le \frac{\|v\|_{L^1}}{t} + \|w\|_{L^\infty} \le (\tilde{c}+1) (M_1u)^*(t).
\qedhere
\end{align*}
\end{proof}
%
%
As a direct consequence of the Herz--Riesz inequality for $M_1$, we can also estimate the rearrangement of $M_p u$ for any $p\in[1, \infty)$.
%
%
\begin{cor}[Herz--Riesz inequality]
\label{cor:herz}
For every $p\in[1,\infty)$, there are $c,c'>0$ such that
\[
  c (M_p u)^*(t) \le M_pu^*(t) \chi_{(0, \meas{\Pcal})}(t) \le c' (M_p u)^*(t),\quad t\in \Rbb^+,
\]
whenever $u\in \Mcal(\Pcal, \mu)$.
\end{cor}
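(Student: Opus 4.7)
The plan is to reduce the claim to Proposition~\ref{pro:herz} via the elementary identity $M_p u = (M_1 |u|^p)^{1/p}$ already noted in the text. Since $s \mapsto s^{1/p}$ is a continuous increasing bijection of $[0,\infty]$, it commutes with the decreasing rearrangement, and I would first write
\[
  (M_p u)^*(t) = \bigl((M_1 |u|^p)^*(t)\bigr)^{1/p}, \quad t \in \Rbb^+.
\]
Applying Proposition~\ref{pro:herz} to $|u|^p$ then yields constants $c, c' > 0$ such that
\[
  c\, (M_1 |u|^p)^*(t) \le (|u|^p)^{**}(t) \le c'\, (M_1 |u|^p)^*(t), \quad t \in (0, \meas{\Pcal}).
\]
The distribution-function identity $\mu_{|u|^p}(s) = \mu_u(s^{1/p})$ gives $(|u|^p)^* = (u^*)^p$, whence $(|u|^p)^{**}(t) = \fint_0^t (u^*(s))^p\,ds$.

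The remaining task is to identify the right-hand side with $(M_p u^*(t))^p$ for $t \in (0, \meas{\Pcal})$. Since $u^*$ (and hence $(u^*)^p$) is decreasing on $\Rbb^+$, a short one-dimensional rearrangement computation will show that for $0 < a < t < b$,
\[
  \frac{1}{b-a}\int_a^b (u^*(s))^p\,ds \le \fint_0^t (u^*(s))^p\,ds,
\]
obtained by splitting $\int_0^b = \int_0^a + \int_a^b$ and using that the average of a decreasing function on $[0,a]$ dominates its average on $[a,b]$. A matching lower bound is obtained by testing $M_p u^*$ at intervals $(\eps, t+\eps)$ and letting $\eps \to 0^+$. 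This yields $M_p u^*(t) = \bigl(\fint_0^t (u^*(s))^p\,ds\bigr)^{1/p}$ on $(0, \meas{\Pcal})$.

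Combining the three observations and extracting a $p$-th root produces the claimed two-sided inequality on $(0, \meas{\Pcal})$ with constants $c^{1/p}$ and $(c')^{1/p}$ inherited from Proposition~\ref{pro:herz}. For $t \ge \meas{\Pcal}$ the middle term vanishes by the indicator, and $(M_pu)^*(t)=0$ because $\mu_{M_pu}(s) \le \meas{\Pcal}\le t$ for every $s\ge 0$, so both estimates are trivial there. I do not expect any serious obstacle: the delicate point is the representation of $M_p$ acting on a decreasing function as an elementary average, but that is a purely one-dimensional calculation and uses no information about $\Pcal$ beyond what is already packaged in Proposition~\ref{pro:herz}.
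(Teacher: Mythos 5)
Your proposal reproduces the paper's argument essentially verbatim: both proofs reduce to the case $p=1$ via the identity $M_p u = (M_1|u|^p)^{1/p}$, commute the $p$-th root with rearrangement, apply Proposition~\ref{pro:herz} to $|u|^p$, and then identify $\fint_0^t (u^*(s))^p\,ds$ with $M_p u^*(t)$ by observing that for a decreasing function $g$ on $\Rbb^+$ one has $M_1 g(t) = \fint_0^t g(s)\,ds$. The paper states the last identification tersely as $(|u|^p)^{**}(t) = M_1((|u|^p)^*)(t)$; your sketch simply spells out the one-dimensional computation behind it, which is correct (the supremum over intervals $(a,b)\ni t$ is dominated by $\fint_0^b g \le \fint_0^t g$, and matched by taking $(0,t+\eps)$ with $\eps\to 0^+$).
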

%
%
\begin{proof}
By the definition of the decreasing rearrangement, we have $(M_p u)^*(t) = 0$ whenever $t\ge \meas{\Pcal}$. In view of Proposition~\ref{pro:herz}, we can estimate for $t < \meas{\Pcal}$ that
\begin{align*}
  (M_p u)^*(t) & = ((M_1 |u|^p)^{1/p})^*(t) = (M_1 |u|^p)^*(t)^{1/p} \approx (|u|^p)^{**}(t)^{1/p} \\
  & = M_1 ((|u|^p)^*)(t)^{1/p} = M_1((u^*)^p)(t)^{1/p} = M_p u^* (t).
\qedhere
\end{align*}
\end{proof}
%
%
The following lemma shows that if $M_p$ is a bounded operator from $X$ to $X_\fm$, then it obeys the desired weak type estimate. In this case, we allow $X$ to be a more general function lattice with absolutely continuous norm. Recall that a (sub)linear operator~$T$ is bounded from $X$ to $X_\fm$ if for every $E\subset \Pcal$ of finite measure there is $c_E \ge 0$ such that $\|(T u)\chi_E\|_X \le c_E \|u\|_X$ whenever $u\in X$.
%
%
\begin{lem}
\label{lem:key_est-M_bdd}
Let $X$ be a quasi-Banach function lattice with absolutely continuous quasi-norm. Suppose that $X \subset L^p_\fm$ and that $M_p: X \to X_\fm$ is bounded for some $p\in[1, \infty)$. If $v\in X$, then $\bigl\|\sigma \suplevp{p}{v}{\sigma}\bigr\|_X \to 0$ as $\sigma \to \infty$.
\end{lem}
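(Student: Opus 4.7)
The approach is to combine the pointwise bound $\sigma \chi_{\suplevp{p}{v}{\sigma}} \le M_p v$ with the absolute continuity of the $X$-quasi-norm. For this to yield a vanishing norm, two preliminary facts are needed: that $\bigcap_{\sigma > 0}\suplevp{p}{v}{\sigma}$ is $\mu$-null, and that $\suplevp{p}{v}{\sigma}$ has finite measure for all sufficiently large $\sigma$. The first ensures the decreasing sequence has the right limit set; the second lets the majorant $M_p v$, restricted to such a set, serve as a dominating element of $X$ in the absolute continuity argument.

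The first observation is quick. Since $M_p\colon X\to X_\fm$ is bounded, $(M_p v)\chi_E \in X$ for every $E\subset \Pcal$ of finite measure, and every element of a quasi-Banach function lattice is finite a.e. Combined with $\sigma$-finiteness of $\mu$, this yields $M_p v < \infty$ a.e.\@ on $\Pcal$, i.e.\@ $\mu\bigl(\bigcap_{\sigma>0}\suplevp{p}{v}{\sigma}\bigr)=0$.

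The finite-measure assertion is, I expect, the chief obstacle, and the strategy is the classical truncation trick for $M_p$. Since $v\in X\subset L^p_\fm$ and $\mu$ is non-atomic, Lemma~\ref{lem:superlevelsets-to-zero} supplies some $t_0$ with $\mu(\{|v|>t_0\})<\infty$. Split $v=v_1+v_2$ with $v_1=v\chi_{\{|v|>t_0\}}$ and $\|v_2\|_{L^\infty}\le t_0$; the set $\{|v|>t_0\}$ having finite measure together with $v\in L^p_\fm$ forces $v_1\in L^p(\Pcal)$. Minkowski's inequality applied to ball averages makes $M_p$ sublinear, so $M_p v \le M_p v_1 + t_0$. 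The standard weak-type $(p,p)$ bound for $M_p$ on doubling metric spaces---recalled in the remark after the definition of $M_p$---then yields $\mu(\{M_p v_1 > \lambda\})<\infty$ for every $\lambda>0$, so $\suplevp{p}{v}{\sigma}$ has finite measure whenever $\sigma > t_0$.

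With both ingredients in hand, the conclusion follows by picking any $\sigma_0 > t_0$ and setting $F \coloneq \suplevp{p}{v}{\sigma_0}$, which has finite measure. Then $(M_p v)\chi_F \in X$ by the hypothesized boundedness of $M_p$, and for $\sigma \ge \sigma_0$ the nesting $\suplevp{p}{v}{\sigma}\subset F$ combined with the lattice property gives
\[
  \|\sigma \chi_{\suplevp{p}{v}{\sigma}}\|_X \le \|(M_p v)\chi_F \cdot \chi_{\suplevp{p}{v}{\sigma}}\|_X .
\]
Because $\suplevp{p}{v}{\sigma} \searrow \{M_p v=\infty\}$, which is $\mu$-null, axiom~\ref{df:AC} applied to the dominating function $(M_p v)\chi_F\in X$ drives the right-hand side to $0$ as $\sigma\to\infty$.
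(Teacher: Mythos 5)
Your argument is correct, and the overall skeleton (pointwise bound $\sigma\chi_{\suplevp{p}{v}{\sigma}} \le M_p v$, nesting of superlevel sets, then \ref{df:AC}) matches the paper's. The one place you diverge is the finite-measure step. The paper argues more directly: since $M_p\colon X\to X_\fm$ is bounded and $X\subset L^p_\fm$, one has $M_p v\in X_\fm\subset L^p_\fm$, and Lemma~\ref{lem:superlevelsets-to-zero} applied to $M_p v$ itself immediately gives $\meas{\suplevp{p}{v}{\sigma}}\to 0$. You instead apply that lemma to $v$, split $v=v_1+v_2$ with $v_1\in L^p$ and $v_2$ bounded, and invoke the weak-type $(p,p)$ bound for $M_p$ (hence the doubling hypothesis) to control $\meas{\{M_pv_1>\lambda\}}$. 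Both routes are valid, but the paper's observation that $M_p v$ is itself in $L^p_\fm$ makes the decomposition and the weak-type estimate unnecessary, and avoids any reliance on doubling in this particular lemma. If you want to tighten your write-up, you could replace the truncation trick by that observation; otherwise the proof stands as given.
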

%
%
\begin{proof}
Since $M_p v\in X_\fm \subset L^p_\fm$, we may use Lemma~\ref{lem:superlevelsets-to-zero} to prove that there is $\sigma_0>0$ such that $\meas{\suplevp{p}{v}{\sigma_0}} < \infty$ and that $\meas{\suplevp{p}{v}{\sigma}} \to 0$ as $\sigma \to \infty$. 

Then, using a Chebyshev-type estimate for $\sigma \ge \sigma_0$ and the boundedness of $M_p$, we obtain that
\[
  \| \sigma\chi_{\suplevp{p}{v}{\sigma}}\|_X \le \| (M_pv) \chi_{\suplevp{p}{v}{\sigma}}\|_X \le \| (M_pv) \chi_{\suplevp{p}{v}{\sigma_0}}\|_X \le c_{\sigma_0} \|v\|_X < \infty.
\]
The absolute continuity of the norm gives that $\|(M_pv) \chi_{\suplevp{p}{v}{\sigma}}\|_X \to 0$ as $\sigma\to\infty$ since $(M_pv) \chi_{\suplevp{p}{v}{\sigma_0}} \in X$ and $\meas{\suplevp{p}{v}{\sigma}} \to 0$. Hence, $\| \sigma \chi_{\suplevp{p}{v}{\sigma}}\|_X \to 0$ as $\sigma\to\infty$.
\end{proof}
%
%
The following example shows that the assumption on absolute continuity of the quasi-norm of $X$ is crucial in the previous lemma and without it the weak type estimate may fail even though $M_p$ is bounded.
\begin{exa}
\label{exa:Mp-bdd_notAC}
The Herz--Riesz inequality (or the Marcinkiewicz interpolation theorem) yields that $M_p: L^{q,s} \to L^{q,s}$ is bounded for all $q\in (p, \infty)$ and $s\in[1, \infty]$. Let us consider $X = L^{q, \infty}(\Rbb^+)$ for arbitrary $q\in (p, \infty)$ and $g(t) = t^{-1/q}$ for $t \in \Rbb^+$. Since $g$ is decreasing, we obtain that $M_p g(t) = \bigl(\fint_0^t s^{-p/q}\,ds\bigr)^{1/p} = c_{p,q} t^{-1/q}$ for every $t\in\Rbb^+$. Hence, $\suplevp{p}{g}{\sigma} = (0, c_{p,q}^q / \sigma^q)$, which gives
\[
  \Bigl\| \sigma \chi_{\suplevp{p}{g}{\sigma}}\Bigr\|_X = \sigma \sup_{t>0} \chi_{\suplevp{p}{g}{\sigma}}^*(t) \phi_X(t) =  \sigma \sup_{0<t<c^q_{p,q}/\sigma^q} t^{1/q} = c_{p,q} > 0
\]
regardless of the value of $\sigma>0$.
\end{exa}
%
%
In the rest of this section, we will be describing weak boundedness (on sets of finite measure) of $M_p$ on \ri spaces, which will be considerably easier as we will apply the Herz--Riesz inequality to reduce the problem and investigate the behavior of the maximal functions on $\Rbb^+$ with $1$-dimensional Lebesgue measure instead.
%
%
\begin{rem}
It follows from the Herz--Riesz inequality that $M_p: X \to M^*(Y)$ is bounded if and only if $M_p: \reps{X} \to M^*(\reps{Y})$ is bounded whenever $X$ and $Y$ are \ri spaces over $(\Pcal, \mu)$ and $p\in[1, \infty)$.

In fact, it can be also shown that $M_p: X \to M^*(Y)_\fm$ is bounded if and only if $M_p: \reps{X} \to M^*(\reps{Y})_\fm$ is bounded. The proof of this statement is however more involved since a uniform correspondence between sets of finite measure in $\Rbb^+$ and in $\Pcal$ needs to be established. In other words, given $f\in \reps{X}$, an equimeasurable $u\in X$ needs to be found so that its level sets have a certain structure, independently of $f$.

Neither of these claims will be used in this paper, whence their proof is omitted.
\end{rem}
Next, we will see that the mere weak boundedness of $M_p$ on sets of finite measure for \ri spaces is sufficient for the desired weak type estimate. Considering the simple example $X=L^p$, we see that weak boundedness of $M_p$ is indeed more general than boundedness of $M_p$, which was required in Lemma~\ref{lem:key_est-M_bdd}. Note that we cannot omit the hypothesis that the norm of $X$ is absolutely continuous since that would lead to invalidity of the claim, which we have already observed in Example~\ref{exa:Mp-bdd_notAC}.
%
%
\begin{lem}
\label{lem:key-Mp-wbdd}
Let $X$ be an \ri space with absolutely continuous norm. Suppose that $M_p: X \to M^*(X)_\fm$ is bounded for some $p\in[1, \infty)$. If $v\in X$, then $\bigl\|\sigma \chi_{\suplevp{p}{v}{\sigma}}\bigr\|_X \to 0$ as $\sigma \to \infty$.
\end{lem}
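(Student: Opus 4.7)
The plan is to split $v$ into a bounded piece and a piece with vanishing $X$-norm, push the split through $M_p$ via Minkowski's inequality, and then apply the weak-type hypothesis on a set of controlled measure. For $\sigma>0$, set $f_\sigma \coloneq v - v_{\sigma/2}$, where $v_{\sigma/2}$ is the truncation of $v$ at the levels $\pm\sigma/2$. Since $|v|<\infty$ a.e., $\bigcap_{\sigma>0} \suplev{v}{\sigma/2}$ has measure zero, so the absolute continuity of $\|\cdot\|_X$ gives $\|f_\sigma\|_X \le \|v\chi_{\suplev{v}{\sigma/2}}\|_X \to 0$ as $\sigma\to\infty$. Because $|v_{\sigma/2}|\le\sigma/2$ pointwise, Minkowski's inequality for $p$-averages over balls yields $M_pv \le \sigma/2 + M_pf_\sigma$, hence $\suplevp{p}{v}{\sigma} \subset \suplevp{p}{f_\sigma}{\sigma/2}$. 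Writing $\phi \coloneq \phi_X$ and $\tau_\sigma \coloneq \meas{\suplevp{p}{f_\sigma}{\sigma/2}}$, the lattice property reduces the problem to showing that $\sigma\phi(\tau_\sigma) \to 0$.

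Next I fix any $r \in (0, \meas{\Pcal})$ and exploit that in the rearrangement-invariant setting the constant in the hypothesis depends only on $\meas{E}$; let $c_r$ denote this constant for sets of measure~$r$. To control $\tau_\sigma$, pick $E_0 \in \suplevr{M_pf_\sigma}{r}$; on $E_0$ one has $M_pf_\sigma \ge (M_pf_\sigma)^*(r)$ by the choice of~$E_0$, so the lattice property gives
\[
  (M_pf_\sigma)^*(r)\,\phi(r) = \bigl\|(M_pf_\sigma)^*(r)\chi_{E_0}\bigr\|_{\wMX} \le \|(M_pf_\sigma)\chi_{E_0}\|_{\wMX} \le c_r\|f_\sigma\|_X.
\]
Whenever $\sigma$ is large enough that $2c_r\|f_\sigma\|_X < \sigma\phi(r)$ (which holds eventually, since $\|f_\sigma\|_X\to 0$ while $\sigma\to\infty$), this forces $(M_pf_\sigma)^*(r) < \sigma/2$, and hence $\tau_\sigma \le r$.

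Finally, for such $\sigma$, non-atomicity of $\mu$ lets us choose $E_1 \supset \suplevp{p}{f_\sigma}{\sigma/2}$ with $\meas{E_1} = r$. Since $M_pf_\sigma > \sigma/2$ on the inner set, the lattice property in $\wMX$ yields
\[
  (\sigma/2)\,\phi(\tau_\sigma) = \bigl\|(\sigma/2)\chi_{\suplevp{p}{f_\sigma}{\sigma/2}}\bigr\|_{\wMX} \le \|(M_pf_\sigma)\chi_{E_1}\|_{\wMX} \le c_r\|f_\sigma\|_X,
\]
so $\sigma\phi(\tau_\sigma) \le 2c_r\|f_\sigma\|_X \to 0$, as required. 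The main subtlety is that the hypothesis only controls $M_pf_\sigma$ on finite-measure pieces of $\wMX$, which forces the two-step deployment of the bound: first to certify that the superlevel set $\suplevp{p}{f_\sigma}{\sigma/2}$ is of bounded measure, and then on a bounded-measure enlargement of it to extract the decay; the truncation $f_\sigma$ is the device that makes both steps feasible by providing a small-norm source function.
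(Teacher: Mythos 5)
Your overall strategy (truncate $v$ so the tail $f_\sigma$ has small $X$-norm, use $M_pv\le M_pf_\sigma+\sigma/2$, and then feed the small tail into the weak-type hypothesis) is the same as the paper's, and the final Chebyshev-type step is structurally identical. However, there is a genuine gap in the middle: you assert that ``in the rearrangement-invariant setting the constant in the hypothesis depends only on $\meas{E}$,'' and you rely on this uniform $c_r$ both to certify $\tau_\sigma\le r$ and in the final estimate, with sets $E_0,E_1$ that move with $\sigma$. This assertion is not justified by anything you cite. The paper's observation that $c_E$ depends only on $\meas{E}$ is stated for \emph{embeddings} $X\emb Y_\fm$ between r.i.\ spaces, where one can transfer a function to an equimeasurable one supported on a prescribed set of the same measure; that trick works precisely because the identity is rearrangement-preserving. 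The operator $M_p$ is not: $(M_pu)\chi_E$ and $(M_pu)\chi_{E'}$ need not be equimeasurable even when $\meas{E}=\meas{E'}$, so from $\|(M_pu)\chi_E\|_{M^*(X)}\le c_E\|u\|_X$ one cannot conclude the same bound for a congeneric but disjoint $E'$. The paper is explicitly aware that this kind of uniformity is delicate: in the unnumbered remark immediately preceding the lemma it notes that the equivalence between $M_p:X\to M^*(Y)_\fm$ and $M_p:\reps X\to M^*(\reps Y)_\fm$ being bounded ``is however more involved since a uniform correspondence between sets of finite measure in $\Rbb^+$ and in $\Pcal$ needs to be established,'' and omits the proof. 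Without something like the Herz--Riesz inequality together with an $X\emb M^p_\loc(X)$-type conclusion (which is not among the hypotheses — the hypothesis is only boundedness on each fixed finite-measure set), the uniform $c_r$ does not follow.

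The paper's proof avoids the issue entirely. First it establishes $X\emb L^p_\fm$ and then shows $\meas{\suplevp{p}{v}{\sigma}}\to 0$ by contradiction: if the measures stayed bounded away from zero, one could build a function $f=\sum_k (k\log k)\chi_{F_k}$ with $\meas{\bigcup_k F_k}<\infty$ violating the consequent embedding $M^*(X)\emb L^{p,\infty}_\fm$ on a \emph{single fixed} set $F$. Then, having a fixed $\sigma_0$ with $\meas{\suplevp{p}{v}{\sigma_0}}<\infty$, the Chebyshev step is carried out on the single fixed set $\suplevp{p}{v}{\sigma_0}$, so only the one constant $c_{\sigma_0}$ ever appears. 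To repair your proof you would either have to justify the uniformity claim (nontrivial, essentially the content of the paper's omitted remark), or replace both of your moving sets $E_0,E_1$ by a fixed set such as $\suplevp{p}{v}{\sigma_0}$ — which requires first establishing $\meas{\suplevp{p}{v}{\sigma_0}}<\infty$ by an independent argument, as the paper does.
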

%
%
\begin{proof}
First, we shall show that $X \emb L^p_\fm$. Let $u\in X$. Since $(M_p u)\chi_A \in M^*(X)$ for every measurable set $A\subset \Pcal$ of finite measure, we have $M_p u <\infty$ a.e.\@ in $\Pcal$. Consequently, $u\chi_B \in L^p$ for every ball $B\subset \Pcal$. Let $E \subset \Pcal$ with $\meas{E}<\infty$. Then, there exists a ball $B_E \subset \Pcal$ such that $\meas{E} \le \meas{B_E} < \infty$. By~\cite[Corollary~II.7.8]{BenSha}, there is a measurable function $\tilde{u} = \tilde{u} \chi_{B_E}$ such that $\tilde{u}^* = (u\chi_E)^*$. By the lattice property~\ref{df:BFL.latticeprop} of $\reps{X}$, we obtain that $\tilde{u} \in X$, whence $\tilde{u} \chi_{B_E} \in L^p$. Now, $\|u\chi_E\|_{L^p} =\|\tilde{u}\chi_{B_E}\|_{L^p} < \infty$ and that is why $u\in L^p_\fm$. Thus, $X \subset L^p_\fm$. In particular, the spaces of functions restricted to $E$ satisfy $X(E) \subset L^p(E)$. These spaces are \ri spaces as well and hence the embedding is continuous by~\cite[Theorem~I.1.8]{BenSha}, i.e., $\|u\chi_E\|_{L^p(\Pcal)} = \|u\|_{L^p(E)} \le c_E \|u\|_{X(E)} = c_E \|u\chi_E\|_{X}$. Therefore, $X \emb L^p_\fm$.

Next, we will show that $\meas{\suplevp{p}{v}{\sigma}} \to 0$ as $\sigma\to\infty$. Suppose on the contrary that $\meas{\suplevp{p}{v}{\sigma}} > a >0$ for every $\sigma>0$. Then, there exist pairwise disjoint sets $F_k \subset \suplevp{p}{v}{k\log k}$ that satisfy $\meas{F_k} = a /(k^2 + k)$, $k\in\Nbb$. Therefore, $M_p v \ge \sum_{k=1}^\infty (k \log k) \chi_{F_k}$. We also have $M^*(X) \emb L^{p,\infty}_\fm$, which follows from the inequality $\phi_{L^p} (t) \le c_b \phi_X(t)$ for $t\in (0, b)$ with arbitrary $b>0$, which in turn follows from the embedding $X \emb L^p_\fm$. Let $F = \bigcup_{k=1}^\infty F_k$. Then, $\meas{F} = a < \infty$ and
\begin{align*}
  \infty &= \sup_{k\ge1} \biggl(\frac{a}{k}\biggr)^{1/p} k\log k = \sup_{t>0} t^{1/p} \sum_{k=1}^\infty (k\log k) \chi_{[a/(k+1), a/k)}(t) \\ 
  & = \biggl\| \sum_{k=1}^\infty (k\log k)\chi_{F_k} \biggr\|_{L^{p,\infty}}  \le c_F \|(M_pv) \chi_F\|_{M^*(X)} \le c_F' \|v\|_X < \infty,
\end{align*}
which is a contradiction and hence $\meas{\suplevp{p}{v}{\sigma}} \to 0$ as $\sigma \to \infty$.

Let $\sigma_0>0$ be chosen such that $\meas{\suplevp{p}{v}{\sigma_0}} < \infty$. For $\sigma > \sigma_0$, we define $v_\sigma = v \chi_{\suplev{v}{\sigma/2}}$. Then, $M_p v \le M_p v_\sigma + \sigma/2$. In particular, $M_p v_\sigma > \sigma/2$ on $\suplevp{p}{v}{\sigma}$. Using a Chebyshev-type estimate and the boundedness of $M_p: X \to M^*(X)_\fm$, we see that
\begin{align*}
   \bigl\|\sigma \chi_{\suplevp{p}{v}{\sigma}}\bigr\|_{M^*(X)}  \lesssim \bigl\|(M_p v_\sigma) \chi_{\suplevp{p}{v}{\sigma_0}}\bigr\|_{M^*(X)} 
	  \le c_{\sigma_0} \|v_\sigma\|_{X} = c_{\sigma_0} \|v \chi_{\suplev{v}{\sigma/2}}\|_{X} \to 0
\end{align*}
as $\sigma\to\infty$ since the norm of $X$ is absolutely continuous and $\meas{\bigcap_{\sigma>0} \suplev{v}{\sigma}}=0$.
\end{proof}
%
%
Due to the definitions of the Marcinkiewicz-type spaces $M^p(X)$, $M^p_\loc(X)$, and $M^*(X)$, we will  obtain that $M_p$ is weakly bounded on $M^p(X)$, and on $M^p_\loc(X)$ on sets of finite measure. Consequently, $M_1$ is weakly bounded on all \ri spaces. In view of Lemma~\ref{lem:key-Mp-wbdd}, we obtain the desired weak estimate, which is needed to conclude the density of Lipschitz functions in $\NX$ on $1$-Poincar\'e spaces using Theorem~\ref{thm:general_Lip-dens}, whenever $X$ is an \ri space.
\begin{pro}
\label{pro:Mp-wbdd}
Let $X$ be an \ri space. Then, $M_p: M^p(X) \to M^*(X)$ is bounded for all $p\in[1, \infty)$. In particular, $M_1: X\to M^*(X)$ is bounded.

Furthermore, $M_p: M^p_\loc(X)\to M^*(X)_\fm$ is bounded. If $X \emb M^p_\loc(X)$, then in particular $M_p: X\to M^*(X)_\fm$ is bounded.
\end{pro}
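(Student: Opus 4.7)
The plan is to deduce all four assertions from the Herz--Riesz inequality of Corollary~\ref{cor:herz}, which gives $(M_pu)^*(t) \approx M_pu^*(t)$ on $(0, \meas{\Pcal})$, with $(M_pu)^*(t) = 0$ for $t \ge \meas{\Pcal}$. For the first claim I would simply write
\[
  \|M_pu\|_{M^*(X)} = \sup_{0<t<\meas{\Pcal}} (M_pu)^*(t)\phi(t) \lesssim \sup_{t>0} M_pu^*(t)\phi(t) = \|u\|_{M^p(X)}.
\]
The special case $M_1 : X \to M^*(X)$ would then follow once I observe that $M_1 u^*(t) = u^{**}(t)$ (since $u^*$ is nonincreasing, the optimal interval containing $t$ is $[0,t]$), so that $M^1(X) = M(X)$ with equal norms; the chain $X \emb M(X)$ from~\eqref{eq:LambdaX-X-MX-embed} then closes this step.

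For the local version, I would fix a set $E \subset \Pcal$ of finite measure. The bound $((M_pu)\chi_E)^*(t) \le (M_pu)^*(t)$ combined with the fact that $((M_pu)\chi_E)^*$ is supported in $(0, \meas{E})$ reduces matters, via Herz--Riesz once more, to controlling $\sup_{0<t<\meas{E}} M_pu^*(t)\phi(t)$ by a multiple of $\|u\|_{M^p_\loc(X)} = \sup_{0<t<1} M_pu^*(t)\phi(t)$. The case $\meas{E} \le 1$ is immediate. Otherwise I would split the interval $(0, \meas{E})$ at $t=1$: on $(0,1)$ the quantity equals $\|u\|_{M^p_\loc(X)}$, while on $[1, \meas{E})$ the monotonicity of $M_pu^*$ paired with the increasing nature of $\phi$ (a consequence of quasi-concavity) gives
\[
  M_pu^*(t)\phi(t) \le M_pu^*(\tfrac{1}{2})\phi(\meas{E}) \le \frac{\phi(\meas{E})}{\phi(1/2)}\, \|u\|_{M^p_\loc(X)},
\]
where the last inequality uses $M_pu^*(\tfrac{1}{2})\phi(\tfrac{1}{2}) \le \|u\|_{M^p_\loc(X)}$. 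The resulting constant depends on $\meas{E}$, as it must for boundedness into $M^*(X)_\fm$. The final clause under $X \emb M^p_\loc(X)$ is then pure composition.

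The one delicate point I expect to handle carefully is the auxiliary monotonicity claim: $M_pu^*$ is nonincreasing on $\Rbb^+$. I would justify this by noting that, since $u^*$ is nonincreasing, for any interval $I \subset \Rbb^+$ containing $t$ the $L^p$-average over $I$ is dominated by the average over $[0, \sup I]$, so
\[
  M_pu^*(t) = \sup_{b \ge t} \left(\frac{1}{b}\int_0^b (u^*(s))^p\,ds\right)^{1/p} = ((u^*)^p)^{**}(t)^{1/p},
\]
and the elementary maximal function on the right is manifestly nonincreasing in $t$. Beyond this observation, the argument is pure bookkeeping with Herz--Riesz and the quasi-concavity of $\phi$.
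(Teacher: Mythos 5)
Your proof is correct and, for the global claim and the $M^1(X)=M(X)$ special case, follows exactly the paper's route: apply Herz--Riesz (Corollary~\ref{cor:herz}) to pass from $(M_pu)^*$ to $M_pu^*$, then read off the Marcinkiewicz-type norm. You are in fact slightly more careful than the paper in noting that one only needs $\lesssim$ rather than $\approx$ here, since $(M_pu)^*$ vanishes on $[\meas{\Pcal},\infty)$ while $M_pu^*$ need not.

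For the local estimate your argument diverges mildly from the paper's: you split $(0,\meas E)$ at $t=1$ and bound the tail directly via $M_pu^*(t)\phi(t)\le M_pu^*(1/2)\,\phi(\meas E)\le (\phi(\meas E)/\phi(1/2))\,\|u\|_{M^p_\loc(X)}$, whereas the paper rescales the entire supremum over $(0,1+\meas E)$ down to $(0,1)$ using the quasi-concavity inequality $\phi(t)/(1+\meas E)\le\phi(t/(1+\meas E))$. Both are correct and give a constant depending only on $\meas E$; your version requires only that $\phi$ is increasing and positive on $(0,1]$, not the full quasi-concavity, while the paper's scaling is a one-liner with the cleaner constant $1+\meas E$. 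A genuine value-added in your write-up is the explicit identity $M_pu^*(t)=((u^*)^p)^{**}(t)^{1/p}$, which the paper implicitly relies on when it invokes ``monotonicity of $M_pu^*$'' without proof; your derivation of it from the fact that averages of a decreasing function increase toward the origin is exactly the right justification.
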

%
%
\begin{proof}
Let $u\in M^p(X)$. Then, the Herz--Riesz inequality yields
\[
  \|M_p u\|_{M^*(X)} = \sup_{t>0} (M_pu)^*(t) \phi(t) \approx \sup_{t>0} M_p u^*(t) \phi(t) = \|u\|_{M^p(X)}.
\]
The restriction $M_1: X \to M^*(X)$ is bounded since $X \emb M(X) = M^1(X)$ by \eqref{eq:LambdaX-X-MX-embed}.

Let now $u\in M^p_\loc(X)$. Let $E \subset \Pcal$ with $\meas{E}<\infty$. With appeal to the Herz--Riesz inequality, we obtain
\begin{align*}
  \|(M_p u) \chi_E\|_{M^*(X)}  &= \sup_{t>0} ((M_pu) \chi_E)^*(t) \phi(t) \le \sup_{0<t<\meas{E}} (M_pu)^*(t) \phi(t)\\
   & \approx \sup_{0<t<\meas{E}} M_pu^*(t) \phi(t) \le \sup_{0<t<1+\meas{E}} M_pu^*(t) \phi(t).
\end{align*}
By the quasi-concavity of $\phi$,  we have that $\phi(t)/(1+\meas{E}) \le \phi\bigl(t/(1+\meas{E})\bigr)$. Monotonicity of $M_pu^*$ then gives us that
\begin{align*}
  \sup_{0<t<1+\meas{E}} M_pu^*(t) \phi(t) & \le (1+\meas{E}) \sup_{0<t<1+\meas{E}} M_pu^*\Biggl(\frac{t}{1+\meas{E}}\Biggr) \phi\Biggl(\frac{t}{ 1+\meas{E}}\Biggr) \\
  & = (1+\meas{E}) \|u\|_{M^p_\loc(X)}\,.
\end{align*}
We have thus shown that $\|(M_p u) \chi_E\|_{M^*(X)} \le c_E \|u\|_{M^p_\loc(X)}$. The boundedness of $M_p: X \to M^*(X)_\fm$ immediately follows provided that $X \emb M^p_\loc(X)$.
\end{proof}
%
%
Let us now take a look at an example that illustrates the difference in strength of the claims for $M^p(X)$ and $M^p_\loc(X)$ in the previous proposition.
\begin{exa}
\label{exa:Mp-vs-Mploc}
Suppose that $1 \le q < p \le s < \infty$ and let $X = (L^q \cap L^s)(\Pcal)$ with a norm given by $\|u\|_X = \max \{ \|u\|_{L^q(\Pcal)}, \|u\|_{L^s(\Pcal)}\}$ for $u\in\Mcal(\Pcal, \mu)$. Then, $X$ has fundamental function $\phi(t) = \max\{t^{1/q}, t^{1/s}\}$ for $t\in(0, \meas{\Pcal})$. The H\"older inequality yields that
\begin{align*}
  \|u\|_{M_\loc^p(X)} & = \sup_{0<t<1} M_pu^*(t)\phi(t) = \sup_{0<t<1} \biggl( \fint_0^t u^*(\tau)^p\,d\tau\biggr)^{1/p} t^{1/s}  \\
	& \le \sup_{0<t<1} \biggl( \fint_0^t u^*(\tau)^s\,d\tau\biggr)^{1/s} t^{1/s} = \|u^* \chi_{(0,1)}\|_{L^s(\Rbb^+)} \le \|u\|_{L^s(\Pcal)} \le \|u\|_X.
\end{align*}
Hence, $X \emb M^p_\loc(X)$. If $\meas{\Pcal}<\infty$, then $M^p(X) = M^p_\loc(X)$, which gives us that $X \emb M^p(X)$ in this case. Suppose instead that $\meas{\Pcal}=\infty$. Then,
\begin{align*}
  \sup_{t>1} M_pu^*(t)\phi(t)  = \sup_{t>1} \biggl( \fint_0^t u^*(\tau)^p\,d\tau\biggr)^{1/p} t^{1/q} = \sup_{t>1} t^{1/q-1/p} \|u^* \chi_{(0,t)}\|_{L^p(\Rbb^+)} = \infty
\end{align*}
unless $\|u\|_{L^p} = 0$. Thus, $M^p(X) = \{ u \in \Mcal(\Pcal, \mu): u=0\mbox{ a.e.}\}$.
Therefore, boundedness of the maximal operator $M_p$ on $M^p(X)$ does not provide us with any useful information on boundedness of $M_p$ on $X$ if $\meas\Pcal = \infty$.

On the other hand, the previous proposition yields that $M_p: X \to M^*(X)_\fm$ (regardless of the measure of $\Pcal$), which suffices in Lemma~\ref{lem:key-Mp-wbdd} to obtain the weak type estimate that is used in Theorem~\ref{thm:general_Lip-dens} to prove density of Lipschitz functions in $\NX$ on $p$-Poincar\'e spaces. See also Example~\ref{exa:Lip-dens-LqcapLs} below, where the case when $0<q<1$ is discussed as well.
\end{exa}
%
%
The following technical lemma helps us find a function $\psi$, which dominates a given quasi-concave $\phi$, so that $\psi^p$ is quasi-concave on $[0, 1)$. Moreover, $M^p_{\psi} = M^p_{\phi}$ and  $M^p_{\psi, \loc} = M^p_{\phi, \loc}$. In fact, $\psi$ equals the fundamental function of $M^p_{\phi, \loc}$ on $[0,1]$.
%
%
\begin{lem}
\label{lem:quasiconc_maj}
Let $\phi$ be a quasi-concave function that is constant on $(\meas{\Pcal}, \infty)$ and let $p\in[1, \infty)$. Define $\psi$ by  
\begin{equation}
  \label{eq:quasiconv_dom}
  \psi(t) =
    \begin{cases}
      0&\mbox{for }t=0,\\
      \displaystyle{t^{1/p} \sup_{t \le s \le 1} \frac{\phi(s)}{s^{1/p}}}& \mbox{for }0< t\le1,\\
      \phi(t)&\mbox{for }t \ge 1.
    \end{cases}
\end{equation}
Then, $\psi^p$ is quasi-concave on $[0,1)$. Moreover, $M^p_\psi = M^p_\phi$ and $M^p_{\psi, \loc} = M^p_{\phi, \loc}$ with equality of the respective (quasi)norms.
\end{lem}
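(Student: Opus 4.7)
My plan is to verify the three conclusions in the order they appear in the statement: first the quasi-concavity of $\psi^p$ on $[0,1)$, then the identity $\|u\|_{M^p_\psi} = \|u\|_{M^p_\phi}$, and finally its local counterpart. The definition in \eqref{eq:quasiconv_dom} is rigged so that $\psi \ge \phi$ pointwise (take $s = t$ in the supremum; on $[1,\infty)$ they agree by definition), which gives one direction of each norm identity for free. The heart of the proof is therefore the reverse norm inequality, which I would extract from a single monotonicity property of $M_p u^*$.

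For the quasi-concavity of $\psi^p$ on $[0, 1)$, I would write $\psi^p(t) = t\, A(t)$ for $0 < t \le 1$, where $A(t) \coloneq \sup_{t \le s \le 1} \phi(s)^p/s$. The function $A$ is clearly decreasing, so $\psi^p(t)/t = A(t)$ is decreasing on $(0, 1)$, and $\psi(0) = 0 < \psi(t)$ for $t > 0$. The delicate step---and the one I expect to be the main obstacle---is showing that $\psi^p$ itself is increasing, since quasi-concavity of $\phi$ does \emph{not} imply quasi-concavity of $\phi^p$ in general. For $0 < t_1 < t_2 \le 1$ and arbitrary $\eps > 0$, choose $s^* \in [t_1, 1]$ with $\phi(s^*)^p/s^* > A(t_1) - \eps$. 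If $s^* \ge t_2$, then $A(t_2) \ge A(t_1) - \eps$, and so $t_2 A(t_2) \ge t_1 A(t_1) - \eps t_1$. If instead $s^* < t_2$, monotonicity of $\phi$ yields $\phi(t_2) \ge \phi(s^*)$, whence
\[
  t_2 A(t_2) \ge t_2 \cdot \frac{\phi(t_2)^p}{t_2} \ge \phi(s^*)^p \ge t_1 \bigl( A(t_1) - \eps \bigr),
\]
using $s^* \ge t_1$ in the last step. Letting $\eps \to 0$ gives $\psi^p(t_1) \le \psi^p(t_2)$ in either case.

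For the norm identities, the key ingredient is that the map $t \mapsto t\,(M_p u^*(t))^p = \int_0^t (u^*)^p\,ds$ is increasing, so $t^{1/p} M_p u^*(t) \le s^{1/p} M_p u^*(s)$ whenever $0 < t \le s$. Fix $u$ and $0 < t \le 1$; multiplying by $\phi(s)/s^{1/p}$ and passing to the supremum over $s \in [t, 1]$ gives
\[
  M_p u^*(t)\, \psi(t) = t^{1/p} M_p u^*(t) \sup_{t \le s \le 1} \frac{\phi(s)}{s^{1/p}} \le \sup_{t \le s \le 1} M_p u^*(s)\, \phi(s).
\]
On $[1, \infty)$ one has $\psi = \phi$, so $M_p u^*(t)\psi(t) = M_p u^*(t) \phi(t) \le \|u\|_{M^p_\phi}$ trivially. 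Taking the supremum over $t > 0$ therefore yields $\|u\|_{M^p_\psi} \le \|u\|_{M^p_\phi}$, hence equality. For the local version, the same inequality holds for $0 < t < 1$; since $\phi$ is continuous on $(0, \infty)$ (quasi-concave functions are Lipschitz on $[\delta, \infty)$) and $s \mapsto M_p u^*(s)$ is left-continuous at $s = 1$ (from continuity of $s \mapsto \int_0^s (u^*)^p$, with the limit possibly being $\infty$), the supremum over $[t, 1]$ coincides with the supremum over $[t, 1)$, which in turn is bounded by $\|u\|_{M^p_{\phi, \loc}}$, yielding $\|u\|_{M^p_{\psi, \loc}} = \|u\|_{M^p_{\phi, \loc}}$.
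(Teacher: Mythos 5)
Your proof is correct and follows essentially the same route as the paper's, with small differences in presentation. For the quasi-concavity of $\psi^p$, your case analysis is the same, but your handling of the case $s^* < t_2$ is a bit leaner: you get the increasing property from $s^* \ge t_1$ and monotonicity of $\phi$ alone, whereas the paper detours through the quasi-concavity of $\phi$ to reach the same bound. For the norm identities, the paper rewrites $M_p u^*(t)\psi(t)$ as a double supremum and interchanges the suprema, while you split it into the trivial inequality $\psi\ge\phi$ and the reverse bound via the monotonicity of $t\mapsto t\,(M_p u^*(t))^p$; both arguments hinge on exactly the same monotonicity of $\int_0^t (u^*)^p$, so the substance is the same. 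Your explicit care at the endpoint $s=1$ (using left-continuity of $M_p u^*$ and continuity of $\phi$ there, allowing the value $\infty$) is a reasonable way to justify a step the paper glosses over by writing the inner supremum over an open interval.
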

%
%
Observe that if $\phi^p$ is quasi-concave on $[0,1)$, then the supremum is attained for $s=t$ whence $\phi(t) = \psi(t)$ for every $t\ge0$.
%
%
\begin{proof}
Let us first show that $\psi^p$ is indeed quasi-concave on $[0,1)$. It follows directly from its definition that $\psi(t)^p/t$ is decreasing for $t\in(0,1)$. Let now $0 < t_1 < t_2 \le 1$. Due to the continuity of $\phi(s)^p/s$ on $[t_1, 1]$, the suprema defining $\psi(t_1)$ and $\psi(t_2)$ are attained at $s_1\in[t_1, 1]$ and $s_2\in[t_2, 1]$, respectively. We distinguish two cases. If $s_1\ge t_2$, then we may choose $s_2=s_1$, whence $\psi(t_2)^p/\psi(t_1)^p = t_2/t_1 > 1$. Therefore, $\psi(t_2)^p>\psi(t_1)^p$. Now, suppose instead that $s_1\in[t_1, t_2)$. Then,
\[
  0<\psi(t_1)^p = \frac{t_1 \phi(s_1)^p}{s_1} \le \phi(t_1)\phi(s_1)^{p-1} \le \phi(t_2)^p = \frac{t_2 \phi(t_2)^p}{t_2} \le \frac{t_2 \phi(s_2)^p}{s_2} = \psi(t_2)^p.
\]

The local norms are equal since
\begin{align*}
  \| u \|_{M^p_{\psi,\loc}} & = \sup_{0<t< 1} M_p u^*(t)\psi(t)
   = \sup_{0<t< 1} \biggl( \sup_{t < x < 1} \frac{\phi(x)^p}{x} \int_0^t u^*(s)^p\,ds \biggr)^{1/p}  \\
   & = \biggl( \sup_{0 < x < 1} \frac{\phi(x)^p}{x} \int_0^x u^*(s)^p\,ds \biggr)^{1/p} 
   = \sup_{0<x< 1} M_p u^*(x)\phi(x) = \| u \|_{M^p_{\phi,\loc}}.
\end{align*}
For the global norms, we have
\begin{align*}
  \| u \|_{M^p_\psi} & = \sup_{t>0} M_p u^*(t)\psi(t) = \max \biggl\{ \sup_{0<t< 1} M_p u^*(t)\psi(t), \sup_{t \ge 1} M_p u^*(t)\psi(t)\biggr\} \\
  & = \max \biggl\{ \sup_{0<t< 1} M_p u^*(t)\phi(t), \sup_{t\ge 1} M_p u^*(t)\phi(t)\biggr\} = \| u \|_{M^p_\phi}\,.
\qedhere
\end{align*}
\end{proof}
%
%
Since the space $M^p_\loc(X)$ is defined using $M_p$, the question of when $X \emb M^p_\loc(X)$ is in principle equivalent to determining whether $M_p$ is weakly bounded on $X$ on sets of finite measure. We can however find classical Lorentz spaces that are embedded in $M^p_\loc(X)$. Hence, we may use Lemma~\ref{lem:key-Mp-wbdd} to obtain the desired weak estimate whenever we show that $X$ is embedded in such a Lorentz space.
%
%
\begin{lem}
\label{lem:Lambdap_MpX}
Let $X$ be an \ri space with fundamental function $\phi$ and let $\psi$ be defined by \eqref{eq:quasiconv_dom}. Then, $\Lambda^p_\psi \emb M^p_\loc(X)$.
In particular, if $X \emb \Lambda^q_{\psi, \fm}\fcrim$ for some $q\le p$, then $X \emb M^p_\loc(X)$.
\end{lem}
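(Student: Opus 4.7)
The plan is to first establish the embedding $\Lambda^p_\psi \emb M^p_\loc(X)$ by a direct calculation exploiting the quasi-concavity of $\psi^p$, and then to deduce the ``in particular'' claim by combining this with Lemma~\ref{lem:lorentz-embedding} and a rearrangement-invariance reduction to a function of finite-measure support. For the first embedding, I would invoke Lemma~\ref{lem:quasiconc_maj} to replace $\phi$ by $\psi$ in the $M^p_\loc$-norm without change and, crucially, to secure that $\psi^p$ is quasi-concave on $[0,1)$; in particular $\psi(t)^p/t \le \psi(s)^p/s$ whenever $0 < s < t < 1$. Substituting this estimate into the definition of $M_p u^*$ gives
\begin{equation*}
  \psi(t)^p\, M_p u^*(t)^p = \int_0^t u^*(s)^p \frac{\psi(t)^p}{t}\,ds \le \int_0^t u^*(s)^p \frac{\psi(s)^p}{s}\,ds \le \|u\|_{\Lambda^p_\psi}^p,
\end{equation*}
and taking $\sup_{0<t<1}$ followed by the $p$-th root yields $\|u\|_{M^p_\loc(X)} \le \|u\|_{\Lambda^p_\psi}$.

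For the second claim, suppose $X \emb \Lambda^q_{\psi,\fm}$ for some $q \in [1,p]$ (the restriction $q \ge 1$ being needed to apply Lemma~\ref{lem:lorentz-embedding}), and let $u \in X$. The key structural observation is that $\|u\|_{M^p_\loc(X)}$ depends only on $u^*|_{(0,1)}$. Using the standing non-atomicity of $\mu$, I would pick a superlevel set $E \in \suplevr{u}{\min\{1,\meas\Pcal\}}$ (or simply $E = \Pcal$ when $\meas\Pcal < 1$); since $E$ captures the largest values of $|u|$, a short distribution-function computation gives $(u\chi_E)^*(t) = u^*(t)$ for $0 < t < \min\{1,\meas\Pcal\}$, and hence $\|u\chi_E\|_{M^p_\loc(X)} = \|u\|_{M^p_\loc(X)}$. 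Since $\meas E \le 1 < \infty$, the hypothesis delivers $\|u\chi_E\|_{\Lambda^q_\psi} \le c\, \|u\|_X$, where $c$ depends only on $\meas E$ by the rearrangement invariance of both spaces. Lemma~\ref{lem:lorentz-embedding} then controls $\|u\chi_E\|_{\Lambda^p_\psi}$ in terms of $\|u\chi_E\|_{\Lambda^q_\psi}$ (the case $q = p$ being trivial), and the first embedding closes the chain to give $\|u\|_{M^p_\loc(X)} \lesssim \|u\|_X$.

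The main obstacle is the transition from the purely local hypothesis $X \emb \Lambda^q_{\psi,\fm}$, which provides estimates only after cutting to sets of finite measure, to the seemingly global quantity $\|u\|_{M^p_\loc(X)}$. The resolution rests on recognising that $M^p_\loc(X)$ is itself local at the level of rearrangements---its norm reads off only the leading segment of $u^*$---so truncating $u$ to its top superlevel set of measure one preserves the $M^p_\loc$-norm while bringing $u$ into the range of the local hypothesis. Everything else is routine: the quasi-concavity argument for Part 1 is essentially an integration by a monotone weight, and Part 2 is assembled by chaining three embeddings with constants that are uniform because each one is governed by the measure of $E$ alone.
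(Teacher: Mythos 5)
Your proof is correct and follows essentially the same route as the paper: reduce $M^p_\loc(X)$ to $M^p_{\psi,\loc}$ via Lemma~\ref{lem:quasiconc_maj}, exploit the monotonicity of $\psi(t)^p/t$ to dominate $M_p u^*(t)\psi(t)$ by the $\Lambda^p_\psi$-norm, restrict $u$ to a top superlevel set of measure $\min\{1,\meas\Pcal\}$ so the local hypothesis applies, and close the chain with Lemma~\ref{lem:lorentz-embedding}. The only cosmetic difference is that you first state the global embedding $\Lambda^p_\psi \emb M^p_\loc(X)$ and then separately justify the restriction step via the observation that $\|\cdot\|_{M^p_\loc(X)}$ reads off only $u^*|_{(0,1)}$, whereas the paper inserts the cutoff $\chi_G$ directly in the displayed chain; the content is identical.
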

%
%
\begin{proof}
Lemma~\ref{lem:quasiconc_maj} and the Herz--Riesz inequality yield
\begin{align*}
  \| u \|_{M^p_{\loc}(X)} & = \| u \|_{M^p_{\phi,\loc}} = \| u \|_{M^p_{\psi,\loc}}
	= \sup_{0<t<1} M_p u^*(t) \psi(t) \\
  &= \sup_{0<t<1} \frac{\psi(t)}{t^{1/p}} \biggl(\int_0^t u^*(s)^p\,ds\biggr)^{1/p} \le \biggl(\int_0^1 u^*(s)^p\psi(s)^p \frac{ds}{s}\biggr)^{1/p} = \|u \chi_G\|_{\Lambda^p_\psi},
\end{align*}
where $G \in \suplevr{u}{\min\{1, \meas{\Pcal}\}}$.
Finally, we obtain that 
\[
  \|u \chi_G\|_{\Lambda^p_\psi} \le c_{p,q} \|u \chi_G\|_{\Lambda^q_\psi} \le c_{p,q}c_{\meas{G}} \|u\|_X
\]
by the embedding of the classical Lorentz spaces (Lemma~\ref{lem:lorentz-embedding}).
\end{proof}
%
%
\begin{rem}
If $\psi^p$ is the highest power of $\psi$ that is quasi-concave on $(0,1)$, then the inclusion $\smash{\Lambda^p_{\psi,\fm}} \emb M^p_\loc(X)$ is rather sharp, in particular when comparing the classical Lorentz spaces with $M^p_\loc(X)$. For example, let $X=L^{q,s}$ with $q \in (1, p)$ and $s\in[1, \infty]$ and suppose that $\meas\Pcal = \infty$. Then, $\psi(t) = \max\{t^{1/p}, t^{1/q}\}$. Similarly as in Example~\ref{exa:Mp-vs-Mploc}, we obtain that $M^p(X) = M^p_\psi = \{u\in\Mcal(\Pcal, \mu): u=0\mbox{ a.e.}\}$. On the other hand, $M^p_\loc(X) = L^p_\fm(\Pcal) = \Lambda^p_{\psi,\fm}(\Pcal)$ since
\begin{align}
\notag
  \|u \|_{M^p_\loc(X)} & = \|u \|_{M^p_{\psi,\loc}} = \sup_{0<t<1} \biggl(\fint_0^t u^*(\tau)\,d\tau\biggr)^{1/p}\psi(t) \\ & =  \sup_{0<t<1} \biggl(\int_0^t u^*(\tau)\,d\tau\biggr)^{1/p} 
	 = \|u^* \chi_{(0,1)}\|_{L^p(\Rbb^+)} = \|u^* \chi_{(0,1)}\|_{\Lambda^p_\psi(\Rbb^+)}\,.
  \label{eq:Mploc-vs-Lambdapsi}
\end{align}

If $X=L^{p, s}$ with $s\in[1, \infty]$, then $\psi(t) = \phi(t) = t^{1/p}$ and a calculation analogous to~\eqref{eq:Mploc-vs-Lambdapsi} yields that $M^p(X) = L^p = \Lambda^p_\psi$, while $M^p_\loc(X) = L^p_\fm = \Lambda^p_{\psi,\fm}$.

If $\phi^q$ is quasi-concave for some $q>p$, then $\psi = \phi$ and Lemma~\ref{lem:fi-quasiconc_weak2weak-glob} below yields that $M_p: M^*(\reps X) \to M^*(\reps X)$ is bounded. Then, $X\emb M^p(X)$ as the boundedness of $M_p$ and \eqref{eq:LambdaX-X-MX-embed} give that $\|u\|_{M^p(X)} = \|M_p u^*\|_{M^*(\reps{X})} \lesssim \|u^*\|_{M^*(\reps{X})} \le \|u^*\|_\reps{X} = \|u\|_X$.
\end{rem}
%
%
Without any additional information on the structure of the norm of an \ri space $X$, it is nearly impossible to describe when $X \emb M^p_\loc(X)$. We can however establish rather general characterizations of the boundedness of $M_p$ on sets of finite measure when both the source space and the target space are weak Marcinkiewicz spaces. In other words, we will study when $M^*(X) \emb M^p_\loc(X)$ using the properties of the fundamental function, which proves helpful since $X \emb M^*(X)$ as seen in \eqref{eq:LambdaX-X-MX-embed}.
%
%
\begin{pro}
\label{pro:Mp-weak2weak}
For every $p\in[1, \infty)$, the mapping $M_p: M^*(X) \to M^*(X)_\fm$ is bounded if and only if
\begin{equation}
  \label{eq:phi-p-avg_bdd}
  \sup_{0<t<1} \phi(t)^p \fint_0^t \frac{ds}{\phi(s)^p} < \infty.
\end{equation}
\end{pro}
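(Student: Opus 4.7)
\smallskip

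\noindent\textbf{Proof plan.} The plan is to transport the question to the representation space and then to decreasing functions on $(\Rbb^+, \lambda^1)$. Since $\|u\|_{M^*(X)} = \|u^*\|_{M^*(\reps X)} = \sup_{t>0} u^*(t)\phi(t)$ and Corollary~\ref{cor:herz} gives $(M_p u)^*(t) \approx M_p u^*(t)$ on $(0, \meas{\Pcal})$, boundedness of $M_p : M^*(X) \to M^*(X)_\fm$ is equivalent to the family of inequalities
\[
  \sup_{0 < t < s} M_p f(t)\,\phi(t) \le c_s \sup_{t > 0} f(t)\,\phi(t), \qquad s > 0,
\]
holding for every decreasing $f \ge 0$ on $\Rbb^+$. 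One direction is immediate from the estimate $\bigl((M_p u)\chi_E\bigr)^*(t) \le (M_p u)^*(t)\chi_{(0, \meas E)}(t)$ combined with Herz--Riesz; for the other, I would, given $s \in (0, \meas{\Pcal})$, enlarge $\{x : M_p u(x) > (M_p u)^*(s)\}$ to a set $E$ of measure exactly $s$ using the non-atomicity of $\mu$, so that $((M_p u)\chi_E)^*(t) = (M_p u)^*(t)$ for $0 < t < s$. Note that for decreasing $f$ the non-centered average over intervals containing $t$ is maximised by $(0, t)$, so $M_p f(t) = (\fint_0^t f(r)^p\,\dd r)^{1/p}$.

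For the necessity, I would test with $f(t) = \phi(t)^{-1}\chi_{(0,\tau)}(t)$ for some fixed $\tau \in (0,\min(1,\meas{\Pcal}))$; this is a non-negative, decreasing, right-continuous function (as $\phi$ is quasi-concave, hence positive and continuous on $(0,\meas{\Pcal})$), and by~\cite[Corollary~II.7.8]{BenSha} it equals $u^*$ for some $u \in \Mcal(\Pcal,\mu)$, with $\|f\|_{M^*(\reps X)} = 1$. Direct computation gives
\[
  M_p f(t)\,\phi(t) = \left(\phi(t)^p \fint_0^t \phi(r)^{-p}\,\dd r\right)^{1/p}\quad\text{for }0<t<\tau,
\]
so the assumed boundedness forces $\phi(t)^p \fint_0^t \phi(r)^{-p}\,\dd r \le c_\tau^p$ on $(0,\tau)$. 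The extension to $(0,1)$ is then routine: finiteness at $t = \tau$ yields $\int_0^\tau \phi^{-p} < \infty$, and on $[\tau, 1]$ both $\phi$ and $\phi^{-1}$ are bounded away from $0$ and $\infty$ by quasi-concavity, so the supremum over $(0,1)$ is finite.

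For the sufficiency, suppose $C \coloneq \sup_{0<t<1} \phi(t)^p \fint_0^t \phi(r)^{-p}\,\dd r < \infty$. For decreasing $f \ge 0$ the pointwise bound $f(r) \le \|f\|_{M^*(\reps X)}/\phi(r)$ gives
\[
  M_p f(t)^p \le \|f\|_{M^*(\reps X)}^p \fint_0^t \phi(r)^{-p}\,\dd r,
\]
which already yields $M_p f(t)\,\phi(t) \le C^{1/p}\|f\|_{M^*(\reps X)}$ for $t \in (0,1)$. For $t \in [1, s]$ (when $s > 1$), I would split $\int_0^t \phi^{-p} = \int_0^1 \phi^{-p} + \int_1^t \phi^{-p}$; the first summand is at most $C\phi(1)^{-p}$ by the hypothesis applied at $t = 1$, and the second is at most $(t-1)\phi(1)^{-p}$ since $\phi$ is increasing. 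Combining with the quasi-concavity estimate $\phi(t) \le t\phi(1)$ yields the bound $M_p f(t)\,\phi(t) \le c_s \|f\|_{M^*(\reps X)}$ on $[1,s]$, and transporting back to $\Pcal$ by taking $E$ arbitrary with $\meas E = s$ completes the proof. The main subtlety I anticipate is not the computation but the clean handling of the $\fm$ qualifier: showing that boundedness on sets of finite measure really is equivalent to the family of sup-bounds on $(0,s)$, which requires the construction of superlevel sets of prescribed measure and thus leans essentially on non-atomicity of $\mu$.
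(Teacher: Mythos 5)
Your proof is correct and follows essentially the same route as the paper's: necessity by testing $M_p$ against the Marcinkiewicz-extremal function $1/\phi$ (you truncate at $\tau<\min(1,\meas\Pcal)$, which is a sensible precaution; the paper takes $A\in\suplevr{M_pv}{1}$ without truncating), and sufficiency by inserting $\phi^p/\phi^p$ into the average and using the pointwise bound $u^*(r)\phi(r)\le\|u\|_{M^*(X)}$, with the passage between $(M_pu)^*$ and $M_pu^*$ handled by the Herz--Riesz inequality in both directions. One small caveat: the ``equivalence'' between boundedness and the family of sup-bounds that you state in the plan is not quite immediate in the direction you flag (boundedness gives $c_E$ a priori depending on the set $E$, not only on $\meas E=s$), but this does not affect the argument, since your necessity step only applies boundedness to one concrete pair $(u,E)$ built from the test function, and the uniform-in-$s$ constants needed for the converse come out of the sufficiency computation itself.
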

%
%
\begin{proof}
Suppose first that $M_p: M^*(X) \to M^*(X)_\fm$ is bounded. By~\cite[Corollary~II.7.8]{BenSha}, there exists $v \in M^*(X)$ such that $v^* = 1/\phi$. Let $A \in \suplevr{M_p v}{1}$. Then, the Herz--Riesz inequality yields that
\begin{multline*}
  \sup_{0<t<1} \biggl( \phi(t)^p \fint_0^t \frac{ds}{\phi(s)^p}\biggr)^{1/p} 
	 =	\sup_{0<t<1} M_p \frac{1}{\phi} (t) \phi(t) 
	= \sup_{0<t<1} M_p v^*(t) \phi(t) \\
	\approx \sup_{0<t<1} (M_p v)^*(t) \phi(t) 
	= \| (M_p v) \chi_{A}\|_{M^*(X)} 
	\lesssim \| v \|_{M^*(X)} = \biggl\| \frac{1}{\phi} \biggr\|_{M^*(X)} 
	= 1.
\end{multline*}

For the proof of the converse, suppose that the expression $\phi(t)^p \fint_0^t \phi(s)^{-p}ds$ is bounded for $t\in(0,1)$. Due to the continuity and monotonicity of $\phi$, it follows that it is bounded for $t\in(0, b)$ for every $b \in \Rbb^+$. Let  $u\in M^*(X)$ and $E\subset \Pcal$ with $b \coloneq \meas{E} < \infty$. Using the Herz--Riesz inequality, we obtain that
\[
 \|(M_p u) \chi_E\|_{M^*(X)}
 \le \sup_{0<t<b} (M_pu)^*(t) \phi(t) \approx \sup_{0<t<b} M_pu^*(t) \phi(t).
\]
We can also estimate
\begin{align*}
  \sup_{0<t<b} M_pu^*(t) \phi(t) & = \sup_{0<t<b} \phi(t) \biggl(\fint_0^t u^*(s)^p \phi(s)^p \frac{ds}{\phi(s)^p}\biggr)^{1/p} 
  \\ 
	&\le \sup_{0<t<b} \biggl( \phi(t)^p \fint_0^t \frac{ds}{\phi(s)^p}\biggr)^{1/p} \sup_{s>0} u^*(s) \phi(s) = c_E \|u\|_{M^*(X)}.
\end{align*}
Hence, $\|(M_p u) \chi_E\|_{M^*(X)} \le c_{E} \|u\|_{M^*(X)}$ as desired.
\end{proof}
%
%
The following characterization of the global boundedness of $M_p$ on the weak space $M^*(X)$ can be proven along the same lines. Thus, the proof is omitted.
\begin{pro}
\label{pro:Mp-weak2weak-glob}
For every $p\in[1, \infty)$, the mapping $M_p: M^*(X) \to M^*(X)$ is bounded if and only if
\begin{equation}
  \label{eq:phi-p-avg_bdd-glob}
  \sup_{t>0} \phi(t)^p \fint_0^t \frac{ds}{\phi(s)^p} < \infty.
\end{equation}
\end{pro}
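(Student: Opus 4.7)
The plan is to copy the proof of Proposition~\ref{pro:Mp-weak2weak} essentially verbatim, merely enlarging the supremum range from $t\in(0,1)$ to $t\in\Rbb^+$ and replacing the target space $M^*(X)_\fm$ by $M^*(X)$. Both implications run in close parallel to the local case.

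For necessity, I would invoke \cite[Corollary~II.7.8]{BenSha} to produce $v\in M^*(X)$ with $v^*=(1/\phi)\chi_{(0,\meas{\Pcal})}$; the truncation ensures $v^*$ is a rearrangement of a function on $\Pcal$, and quasi-concavity of $\phi$ makes $1/\phi$ right-continuous and decreasing. One computes $\|v\|_{M^*(X)}=\sup_{0<t<\meas{\Pcal}}\phi(t)\cdot\phi(t)^{-1}=1$, and since $v^*$ is supported in $(0,\meas{\Pcal})$ we get
\[
  M_pv^*(t)\,\phi(t)=\biggl(\phi(t)^p\fint_0^t\frac{ds}{\phi(s)^p}\biggr)^{1/p}\quad\text{for } t<\meas{\Pcal}.
\]
The Herz--Riesz inequality (Corollary~\ref{cor:herz}) together with the assumed boundedness then controls the supremum of this quantity over $(0,\meas{\Pcal})$ by a multiple of $\|M_pv\|_{M^*(X)}\lesssim\|v\|_{M^*(X)}=1$. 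When $\meas{\Pcal}=\infty$ this is already \eqref{eq:phi-p-avg_bdd-glob}. When $\meas{\Pcal}<\infty$ I would extend the estimate to $t\ge\meas{\Pcal}$ using that $\phi$ is constant on $[\meas{\Pcal},\infty)$: splitting $\int_0^t\phi(s)^{-p}\,ds$ at $\meas{\Pcal}$ and simplifying yields
\[
  \phi(t)^p\fint_0^t\frac{ds}{\phi(s)^p}\le\phi(\meas{\Pcal})^p\fint_0^{\meas{\Pcal}}\frac{ds}{\phi(s)^p}+\frac{t-\meas{\Pcal}}{t}\le C+1,\quad t\ge\meas{\Pcal},
\]
where $C$ is the supremum over $(0,\meas{\Pcal})$ just obtained.

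For sufficiency, given $u\in M^*(X)$, I would apply Corollary~\ref{cor:herz} to reduce $\|M_pu\|_{M^*(X)}=\sup_{t>0}(M_pu)^*(t)\phi(t)$ to a multiple of $\sup_{0<t<\meas{\Pcal}}M_pu^*(t)\phi(t)$, the factor $\chi_{(0,\meas{\Pcal})}$ in the Herz--Riesz inequality handling $t\ge\meas{\Pcal}$ automatically. Inserting the trivial weight $\phi(s)^p\phi(s)^{-p}$ into the integral defining $M_pu^*(t)$ and pulling out $\sup_su^*(s)\phi(s)=\|u\|_{M^*(X)}$ yields, exactly as in Proposition~\ref{pro:Mp-weak2weak},
\[
  M_pu^*(t)\,\phi(t)\le\|u\|_{M^*(X)}\biggl(\phi(t)^p\fint_0^t\frac{ds}{\phi(s)^p}\biggr)^{1/p},
\]
and the right-hand side is uniformly bounded by \eqref{eq:phi-p-avg_bdd-glob}.

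The only mildly technical step is the case $\meas{\Pcal}<\infty$ of the necessity direction, where $1/\phi$ is eventually constant and hence cannot itself be a rearrangement of a function on $\Pcal$; the truncation of $v^*$ above and the splitting of the integral handle this cleanly, and I anticipate no further obstacle.
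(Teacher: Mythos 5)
The paper omits the proof, remarking only that Proposition~\ref{pro:Mp-weak2weak-glob} ``can be proven along the same lines'' as Proposition~\ref{pro:Mp-weak2weak}, which is exactly what you do; your adaptation (choosing $v^*=(1/\phi)\chi_{(0,\meas{\Pcal})}$, applying Herz--Riesz, inserting the weight $\phi(s)^p\phi(s)^{-p}$) is the intended argument and is correct. Your careful handling of the case $\meas{\Pcal}<\infty$ — truncating $1/\phi$ so it is a genuine decreasing rearrangement, and then extending the supremum bound from $(0,\meas{\Pcal})$ to all $t>0$ using that $\phi$ is constant past $\meas{\Pcal}$ — is a sound and welcome added precision.
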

%
%
We will see in the next lemma that \eqref{eq:phi-p-avg_bdd} is satisfied provided that a certain power of $\phi$ is quasi-concave near zero. In particular, it follows that $M_p: X \to M^*(X)_\fm$ is bounded and Lemma~\ref{lem:key-Mp-wbdd} can be applied to verify the hypotheses of Theorem~\ref{thm:general_Lip-dens}, the general result on density of Lipschitz functions in $\NX$.
\begin{lem}
\label{lem:fi-quasiconc_weak2weak}
Let $\phi: [0,\infty) \to [0, \infty)$ be an increasing function with $\phi(t) = 0$ if and only if $t=0$. Let $q>p\ge1$. If $\phi^q$ is concave or quasi-concave on $[0, \delta)$ for some $\delta>0$, then the condition \eqref{eq:phi-p-avg_bdd} is fulfilled.
\end{lem}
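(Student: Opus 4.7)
My plan is to reduce to the quasi-concave case (the concave case being subsumed by \eqref{eq:concave-quasiconcave}) and then exploit the key pointwise comparison that quasi-concavity of $\phi^q$ gives us. Specifically, quasi-concavity of $\phi^q$ on $[0,\delta)$ means $\phi(t)^q/t$ is decreasing there, so for $0<s\le t<\delta$ we obtain
\[
  \phi(s)^q \ge \frac{s}{t}\,\phi(t)^q, \qquad \text{hence}\qquad \frac{1}{\phi(s)^p} \le \Bigl(\frac{t}{s}\Bigr)^{p/q}\frac{1}{\phi(t)^p}.
\]

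The main computation is then a straightforward integration. For $t\in(0,\delta)$, integrating the above bound against $ds$ from $0$ to $t$ gives
\[
  \fint_0^t \frac{ds}{\phi(s)^p} \le \frac{t^{p/q-1}}{\phi(t)^p}\int_0^t s^{-p/q}\,ds = \frac{1}{(1-p/q)\,\phi(t)^p} = \frac{q}{(q-p)\,\phi(t)^p},
\]
where the integrability of $s^{-p/q}$ on $(0,t)$ is precisely what is guaranteed by $q>p$. Multiplying by $\phi(t)^p$ yields the uniform bound $q/(q-p)$ for all $t\in(0,\delta)$.

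It remains to extend the estimate from $(0,\delta)$ to the full interval $(0,1)$. Since $\phi$ is increasing and strictly positive on $(0,\infty)$, the function $1/\phi^p$ is bounded on $[\delta/2,1)$ by $1/\phi(\delta/2)^p$, so $\int_{\delta/2}^t \phi(s)^{-p}\,ds \le t/\phi(\delta/2)^p$; combined with the already established bound for the integral on $(0,\delta/2)$, this gives $\fint_0^t \phi(s)^{-p}\,ds \le C(\delta,p,q,\phi)$ uniformly for $t\in[\delta,1)$. Multiplying by $\phi(t)^p \le \phi(1)^p$ (monotonicity) yields a finite bound on $[\delta,1)$, completing the proof. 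No serious obstacle is anticipated; the only care needed is to recognize that we never actually use strict concavity—quasi-concavity on a small neighborhood of $0$ suffices, and the condition $q>p$ is exactly what makes the singular integral $\int_0^t s^{-p/q}\,ds$ finite.
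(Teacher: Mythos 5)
Your proof is correct and takes essentially the same approach as the paper: both reduce the concave case to the quasi-concave case via \eqref{eq:concave-quasiconcave}, both exploit the pointwise bound $1/\phi(s)^p \le (t/s)^{p/q}/\phi(t)^p$ coming from monotonicity of $\phi(\cdot)^q/\cdot$, and both conclude with the same computation $\fint_0^t s^{-p/q}\,ds = t^{-p/q}q/(q-p)$. The only (cosmetic) difference is that you spell out the extension to $[\delta,1)$ a bit more explicitly than the paper's one-line remark that $\phi^{\pm p}\in L^\infty([\delta,1])$.
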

%
%
\begin{proof}
If $\phi^q$ is concave on $[0, \delta)$, then it is quasi-concave there by \eqref{eq:concave-quasiconcave}.

Suppose now that $\phi^q$ is quasi-concave on $[0, \delta)$. For $t\in (0, \delta)$, we obtain
\[
  \phi(t)^p \fint_0^t \frac{ds}{\phi(s)^p} = \biggl( \frac{\phi(t)^q}{t}\biggr)^{p/q} t^{p/q} \fint_0^t \frac{ds}{\phi(s)^p} \le t^{p/q} \fint_0^t \frac{ds}{s^{p/q}} = \frac{q}{q-p}\,.
\]
If $\delta<1$, then $\phi^p, \phi^{-p} \in L^\infty([\delta, 1])$, giving the claimed boundedness on $(0,1)$.
\end{proof}
We can also obtain a global result by a minor tweak of the previous argument.
\begin{lem}
\label{lem:fi-quasiconc_weak2weak-glob}
Let $\phi$ be a quasi-concave function. If $\phi^q$ is concave or quasi-concave for some $q>p\ge1$, then $M_p: M^*(X) \to M^*(X)$ is bounded.
\end{lem}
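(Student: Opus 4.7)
The plan is to run the same pointwise estimate that underpins Lemma~\ref{lem:fi-quasiconc_weak2weak} and then feed the resulting bound into the global characterization of Proposition~\ref{pro:Mp-weak2weak-glob} instead of its local counterpart. The key simplification here is that the quasi-concavity hypothesis on $\phi^q$ is now assumed to hold on all of $[0,\infty)$, so there is no need to split the argument at some threshold $\delta$, and the verification of \eqref{eq:phi-p-avg_bdd-glob} can be done uniformly in $t>0$.

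First, I would reduce the concave case to the quasi-concave case. Since $\phi$ is quasi-concave, $\phi(0)=0$ and hence $\phi^q(0)=0$; if $\phi^q$ is concave, then \eqref{eq:concave-quasiconcave} applied with $R=\infty$ shows $\phi^q$ is quasi-concave on $[0,\infty)$. So from now on I may assume $\phi^q$ is quasi-concave globally, which means $\phi(s)^q/s \ge \phi(t)^q/t$ for all $0<s\le t$, or equivalently
\[
  \phi(s) \ge \Bigl(\tfrac{s}{t}\Bigr)^{1/q} \phi(t), \qquad 0<s\le t.
\]

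Next, for arbitrary $t>0$, I would plug this pointwise estimate into the integral in \eqref{eq:phi-p-avg_bdd-glob}. Raising to the $-p$ power (here the assumption $q>p\ge 1$ makes the exponent $p/q$ lie in $(0,1)$),
\[
  \phi(t)^p \fint_0^t \frac{ds}{\phi(s)^p}
  \le \phi(t)^p \cdot \frac{1}{\phi(t)^p} \cdot \fint_0^t \Bigl(\frac{t}{s}\Bigr)^{p/q} ds
  = t^{p/q}\fint_0^t s^{-p/q}\,ds = \frac{q}{q-p}\,,
\]
with the right-hand side independent of $t$. Taking the supremum over $t>0$ therefore gives \eqref{eq:phi-p-avg_bdd-glob}, and Proposition~\ref{pro:Mp-weak2weak-glob} yields the boundedness of $M_p:M^*(X)\to M^*(X)$.

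There is essentially no obstacle; the mild point to check is just that the argument is insensitive to whether $\meas{\Pcal}$ is finite. If $\meas{\Pcal}<\infty$, then $\phi$ is constant on $[\meas{\Pcal},\infty)$, so $\phi(t)/t$ and $\phi(t)^q/t$ are both decreasing there, which is compatible with the global quasi-concavity of $\phi$ and $\phi^q$ and causes no trouble in the chain of inequalities above. Hence the proof of Lemma~\ref{lem:fi-quasiconc_weak2weak-glob} is a verbatim transcription of the computation from Lemma~\ref{lem:fi-quasiconc_weak2weak} with the interval $(0,\delta)$ replaced by $(0,\infty)$, followed by an appeal to Proposition~\ref{pro:Mp-weak2weak-glob} in place of Proposition~\ref{pro:Mp-weak2weak}.
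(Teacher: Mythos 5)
Your proof is correct and follows essentially the same approach as the paper's (which is only given as a sketch that says to argue "analogously as in the proof of Lemma~\ref{lem:fi-quasiconc_weak2weak}" to verify \eqref{eq:phi-p-avg_bdd-glob} with the uniform bound $q/(q-p)$, then invoke Proposition~\ref{pro:Mp-weak2weak-glob}). Your write-up simply spells out the reduction from concave to quasi-concave and the integral computation in full, which matches the calculation appearing in Lemma~\ref{lem:fi-quasiconc_weak2weak} with $\delta$ replaced by $\infty$.
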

\begin{proof}[Sketch of proof]
Analogously as in the proof of Lemma~\ref{lem:fi-quasiconc_weak2weak}, we show that \eqref{eq:phi-p-avg_bdd-glob} holds true, i.e., $\sup_{t>0} \phi(t)^p \fint_0^t \frac{ds}{\phi(s)^p} < \infty$ since the expression can be estimated from above by $q/(q-p)$ for all $t\in\Rbb^+$. By Proposition~\ref{pro:Mp-weak2weak-glob}, we can conclude that $M_p: M^*(X) \to M^*(X)$ is bounded.
\end{proof}
%
%
In order to show that \eqref{eq:phi-p-avg_bdd} holds, we may, roughly speaking, measure the ``modulus of quasi-concavity'' of the fundamental function. This gives us a slightly finer condition that generalizes the one established in the previous lemma.
\begin{lem}
\label{lem:m_phi-in-Lp}
Given a quasi-concave function $\phi$, let $m_\phi (s) = \sup_{0<t<1} \phi(t) / \phi(st)$ for $s\in(0,1)$. If $m_\phi \in L^p(0,1)$, then \eqref{eq:phi-p-avg_bdd} is satisfied.
\end{lem}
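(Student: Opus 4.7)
The plan is to perform a change of variables that turns the inner average into an integral whose integrand is controlled directly by $m_\phi$. For fixed $t \in (0,1)$, substitute $s = ut$ with $u \in (0,1)$ to rewrite
\[
  \phi(t)^p \fint_0^t \frac{ds}{\phi(s)^p} = \phi(t)^p \int_0^1 \frac{du}{\phi(ut)^p} = \int_0^1 \biggl(\frac{\phi(t)}{\phi(ut)}\biggr)^p du.
\]

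For each fixed $u \in (0,1)$, the definition of $m_\phi$ gives the pointwise bound $\phi(t)/\phi(ut) \le m_\phi(u)$, valid uniformly in $t \in (0,1)$. Substituting this into the integrand yields
\[
  \phi(t)^p \fint_0^t \frac{ds}{\phi(s)^p} \le \int_0^1 m_\phi(u)^p\,du = \| m_\phi \|_{L^p(0,1)}^p,
\]
and since the right-hand side is independent of $t$, taking the supremum over $t \in (0,1)$ establishes \eqref{eq:phi-p-avg_bdd}.

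There is no real obstacle here; the main point is the observation that averaging $\phi^{-p}$ against a dilated argument is precisely what $m_\phi$ is built to measure. Note also that the quasi-concavity of $\phi$ guarantees $m_\phi(u) \ge 1$ for $u \in (0,1)$ (since $\phi$ is increasing), so the integrability assumption $m_\phi \in L^p(0,1)$ is the genuine content of the hypothesis.
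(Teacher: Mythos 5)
Your proof is correct and follows essentially the same route as the paper: a change of variables $s = ut$ converts the averaged integral into $\int_0^1 (\phi(t)/\phi(ut))^p\,du$, which is bounded pointwise in $u$ by $m_\phi(u)^p$ uniformly in $t$. The paper's proof is the same calculation presented more tersely.
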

%
%
\begin{proof}
For $0<t<1$, a change of variables yields
\[
  \phi(t)^p \fint_0^t \frac{ds}{\phi(s)^p} = \phi(t)^p \int_0^1 \frac{ds}{\phi(st)^p} \le \|m_\phi\|_{L^p(0,1)}^p < \infty.
  \qedhere
\]
\end{proof}
%
%
Diverse properties of \ri spaces can be captured and described by various indices. The Boyd indices and the fundamental (Zippin) indices belong to the best studied characteristics. We will see that the upper Boyd index of an \ri space $X$ can be used to determine whether $M_p: X\to X$ is bounded, whereas the upper fundamental index determines whether $M_p: M^*(X) \to M^*(X)$ is bounded.
%
%
\begin{df}
\label{df:indices}
Let $X$ be an \ri space with fundamental function $\phi$. For $s>0$, we define the \emph{dilation operator} $E_s$ acting on $\Mcal(\Rbb^+, \lambda^1)$ as $E_sf(t) = f(st)$, $t>0$.

For $s>0$, let us define
\[
  h_X(s) = \sup_{0\neq f\in\reps X}\frac{\|E_{1/s} f\|_{\reps X}}{\|f\|_{\reps X}} \quad \mbox{and}\quad k_X(s) = \sup_{t>0} \frac{\phi(st)}{\phi(t)} =\sup_{t>0}\frac{\|E_{1/s} \chi_{(0, t)}\|_{\reps X}}{\|\chi_{(0, t)}\|_{\reps X}}\,.
\]  
Then, we define the \emph{upper Boyd index} $\itoverline{\alpha}_X$ of $X$, and the \emph{upper fundamental index} $\itoverline{\beta}_X$ of $X$ (also called the \emph{upper Zippin index}) by
\[
  \itoverline{\alpha}_X = \inf_{s>1} \frac{\log h_X(s)}{\log s} \quad \mbox{and} \quad
  \itoverline{\beta}_X = \inf_{s>1} \frac{\log k_X(s)}{\log s}\,.
\]
\end{df}
%
%
\begin{rem}
It shown in Bennett and Sharpley~\cite[Section III.5]{BenSha} that $1\le k_X(s) \le h_X(s) \le s$ for $s\ge 1$. Hence, the indices satisfy $\itoverline{\beta}_X \le \itoverline{\alpha}_X$ and both lie in $[0,1]$. Moreover, the infima in the definition of the indices can be determined as limits as $s\to \infty$. For more details, see also Boyd~\cite{Boy}, Zippin~\cite{Zip}, or Maligranda~\cite{Mali84}.
\end{rem}
%
%
\begin{lem}
\label{lem:beta_m}
If $\itoverline{\beta}_X < 1/p$, then the function $m_\phi$ of Lemma~\ref{lem:m_phi-in-Lp} lies in $L^p(0,1)$ and hence $M_p: M^*(X) \to M^*(X)_\fm$ is bounded.
\end{lem}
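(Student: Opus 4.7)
The plan is to translate the hypothesis on $\itoverline\beta_X$ into a pointwise power-law bound for $k_X$, to relate $m_\phi$ to $k_X$ via a simple substitution, and to conclude using Lemma~\ref{lem:m_phi-in-Lp} together with Proposition~\ref{pro:Mp-weak2weak}.

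First I would observe that $k_X$ is increasing and submultiplicative on $[1,\infty)$: the inequality $k_X(uv)\le k_X(u)\,k_X(v)$ follows by factoring
\[
  \frac{\phi(uvt)}{\phi(t)}=\frac{\phi(u\cdot vt)}{\phi(vt)}\cdot\frac{\phi(vt)}{\phi(t)}
\]
and taking the supremum in $t>0$. Since $k_X(1)=1$, Fekete's lemma applied to the subadditive function $x\mapsto\log k_X(e^x)$ yields
\[
  \itoverline\beta_X=\inf_{s>1}\frac{\log k_X(s)}{\log s}=\lim_{s\to\infty}\frac{\log k_X(s)}{\log s}.
\]
Choosing any $\beta\in(\itoverline\beta_X,1/p)$ and an $s_0>1$ with $k_X(s_0)\le s_0^\beta$, iterating submultiplicativity on the geometric sequence $s_0^n$ and interpolating via monotonicity of $k_X$ on $[s_0^n,s_0^{n+1}]$ produces a constant $C=s_0^\beta$ such that $k_X(s)\le C s^\beta$ for every $s\ge 1$.

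Second, for $s\in(0,1)$ I would use the substitution $u=st$ in the defining supremum for $m_\phi(s)$ to obtain
\[
  m_\phi(s)=\sup_{0<t<1}\frac{\phi(t)}{\phi(st)}=\sup_{0<u<s}\frac{\phi(u/s)}{\phi(u)}\le\sup_{u>0}\frac{\phi(u/s)}{\phi(u)}=k_X(1/s)\le C s^{-\beta}.
\]
Since $\beta p<1$, the function $s\mapsto s^{-\beta p}$ is integrable on $(0,1)$, so $m_\phi\in L^p(0,1)$.

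Invoking Lemma~\ref{lem:m_phi-in-Lp} then gives that $\phi$ satisfies the integral condition \eqref{eq:phi-p-avg_bdd}, and Proposition~\ref{pro:Mp-weak2weak} converts this into the claimed boundedness $M_p:M^*(X)\to M^*(X)_\fm$. The only nontrivial step is the first one, namely promoting the infimum-based definition of $\itoverline\beta_X$ into a genuine pointwise estimate $k_X(s)\le C s^\beta$ holding for all $s\ge1$; this rests on the submultiplicativity of $k_X$ combined with Fekete's lemma, after which the remainder of the argument is a routine change of variables and integration.
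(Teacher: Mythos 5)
Your proof is correct and takes essentially the same approach as the paper: both use the substitution $u=st$ to bound $m_\phi(s)\le k_X(1/s)$, then a power-law estimate on $k_X$ coming from $\itoverline\beta_X<1/p$, and finally feed the resulting $L^p$-integrability into Lemma~\ref{lem:m_phi-in-Lp} and Proposition~\ref{pro:Mp-weak2weak}. The only difference is cosmetic: you re-derive the pointwise bound $k_X(s)\le C\,s^\beta$ for all $s\ge1$ from submultiplicativity of $k_X$ and Fekete's lemma, whereas the paper invokes the limit characterization of $\itoverline\beta_X$ quoted in the remark after Definition~\ref{df:indices} to get $k_X(s)\le s^{1/q}$ for $s>s_0$ and then uses the elementary bound $k_X(s)\le s$ on the compact remainder $1< s\le s_0$.
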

%
%
\begin{proof}
There exist $q \in (p, 1/\itoverline{\beta}_X)$ and $s_0 > 1$ such that $k_X(s) \le s^{1/q}$ for all $s>s_0$. We can estimate
\[
  m_\phi(s) \le k_X(s^{-1}) \le
  \begin{cases}
    s^{-1/q}& \mbox{for }s\in(0, s_0^{-1}), \\
    s^{-1}& \mbox{for }s\in[s_0^{-1}, 1).
  \end{cases}
\]
Consequently, $m_\phi \in L^p(0,1)$. The boundedness of $M_p$ was established in Proposition~\ref{pro:Mp-weak2weak} in view of Lemma~\ref{lem:m_phi-in-Lp}.
\end{proof}
In fact, the inequality for $\itoverline{\beta}_X$ in Lemma~\ref{lem:beta_m} leads to a stronger result.
\begin{lem}
\label{lem:beta_m_glob}
If $\itoverline{\beta}_X < 1/p$, then $M_p: M^*(X) \to M^*(X)$ is bounded.
\end{lem}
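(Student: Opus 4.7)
The plan is to reduce the claim to the characterization established in Proposition~\ref{pro:Mp-weak2weak-glob}, namely that $M_p \colon M^*(X) \to M^*(X)$ is bounded if and only if
\[
  \sup_{t>0} \phi(t)^p \fint_0^t \frac{ds}{\phi(s)^p} < \infty.
\]
So it suffices to verify \eqref{eq:phi-p-avg_bdd-glob} for all $t > 0$, not merely for $t$ bounded away from $\infty$ as in Lemma~\ref{lem:beta_m}.

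I would start with a change of variables $s = ut$ in the averaged integral to obtain
\[
  \phi(t)^p \fint_0^t \frac{ds}{\phi(s)^p} = \int_0^1 \biggl( \frac{\phi(t)}{\phi(ut)} \biggr)^p du.
\]
The point of this reformulation is that the integrand is controlled by the dilation function $k_X$: setting $s = 1/u > 1$, the definition $k_X(s) = \sup_{\tau>0} \phi(s\tau)/\phi(\tau)$ applied with $\tau = ut$ gives $\phi(t)/\phi(ut) \le k_X(1/u)$ uniformly in $t$. Hence the task collapses to showing the purely one-variable bound $\int_0^1 k_X(1/u)^p\,du < \infty$.

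To control this integral I will use the hypothesis $\itoverline{\beta}_X < 1/p$ exactly as in the proof of Lemma~\ref{lem:beta_m}: choose $q \in (p, 1/\itoverline{\beta}_X)$ and $s_0 > 1$ such that $k_X(s) \le s^{1/q}$ for all $s > s_0$. Split the integral at $u = 1/s_0$; on $(0, 1/s_0)$ the integrand is dominated by $u^{-p/q}$, which is integrable since $p/q < 1$, while on $[1/s_0, 1)$ the function $k_X(1/u)$ is bounded using the universal estimate $k_X(s) \le s$ for $s \ge 1$. Both pieces are finite, yielding the desired uniform bound.

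The only subtlety—and the reason Lemma~\ref{lem:beta_m} restricted attention to $t \in (0,1)$—is that the argument above hinges on a \emph{global} growth bound $k_X(s) \lesssim s^{1/q}$ for large $s$, whereas the local version needed only information about $\phi$ near zero. The hypothesis $\itoverline{\beta}_X < 1/p$ is precisely what upgrades the local statement to a global one, since $\itoverline{\beta}_X$ is computed as a limit at infinity of $\log k_X(s)/\log s$. With the estimate in hand, Proposition~\ref{pro:Mp-weak2weak-glob} immediately concludes the proof.
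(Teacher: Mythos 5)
Your proof is correct and follows the same route as the paper's sketch: verify \eqref{eq:phi-p-avg_bdd-glob} by the change of variables $s=ut$, bound the integrand by $k_X(1/u)^p$ (which the paper writes as $\widetilde m_\phi(u)^p$), and integrate using the growth estimate $k_X(s)\le s^{1/q}$ for $s>s_0$ derived from $\itoverline\beta_X<1/p$, then invoke Proposition~\ref{pro:Mp-weak2weak-glob}. You have simply carried out explicitly the modifications of Lemmas~\ref{lem:m_phi-in-Lp} and~\ref{lem:beta_m} that the paper's sketch alludes to.
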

\begin{proof}[Sketch of proof]
The result can be obtained similarly as in Lemma~\ref{lem:beta_m}, using Proposition~\ref{pro:Mp-weak2weak-glob} and a simple modification of Lemma~\ref{lem:m_phi-in-Lp}. If we define $\widetilde{m}_\phi$ as a global version of $m_\phi$, i.e., $\widetilde{m}_\phi(s) = \sup_{t>0} \phi(t)/\phi(st)$, then $\widetilde{m}_\phi(s) = k_X(s^{-1}) \in L^p(0,1)$. That provides us with inequality \eqref{eq:phi-p-avg_bdd-glob}, which in turn is equivalent to the boundedness of $M_p: M^*(X) \to M^*(X)$.
\end{proof}
%
%
\begin{pro}
\label{pro:alpha_M-bdd}
If $\itoverline{\alpha}_X < 1/p$, then $M_p: X \to X$ is bounded. On the other hand, if $\itoverline{\alpha}_X > 1/p$, then $M_p$ is not a bounded mapping from $X$ to $X$.
\end{pro}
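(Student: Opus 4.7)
The plan is to use the Herz--Riesz inequality (Corollary~\ref{cor:herz}) to reduce both implications to estimates for nonnegative decreasing functions on $\Rbb^+$. For $u \in X$ we have $\|M_p u\|_X = \|(M_p u)^*\|_{\reps{X}}$, and $(M_p u)^*(t) \approx M_p u^*(t)$ on $(0, \meas{\Pcal})$; hence boundedness of $M_p: X \to X$ is equivalent to boundedness of the map $v \mapsto M_p v$ on $\reps{X}$ for nonnegative decreasing $v = u^*$.

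For the necessity, I would suppose $M_p: X \to X$ has operator norm $C$ and exploit a one-sided dilation lower bound. For any nonnegative decreasing $v$ on $\Rbb^+$ and any $s > 1$, monotonicity gives
\[
  M_p v(t) \ge \biggl(\frac{1}{t}\int_0^{t/s} v(\tau)^p\,d\tau\biggr)^{1/p} \ge s^{-1/p} v(t/s) = s^{-1/p}(E_{1/s}v)(t).
\]
By~\cite[Corollary~II.7.8]{BenSha}, every nonnegative $v \in \reps{X}$ equals $u^*$ for some $u \in X$, so the reduction together with this lower bound yields $\|E_{1/s}v\|_{\reps{X}} \le c C s^{1/p}\|v\|_{\reps{X}}$ for every such $v$. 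Since $E_{1/s}$ commutes with decreasing rearrangement, the same estimate extends to all $v \in \reps{X}$, and therefore $h_X(s) \le cCs^{1/p}$ for every $s > 1$. This forces $\itoverline{\alpha}_X = \lim_{s\to\infty}\log h_X(s)/\log s \le 1/p$, contradicting the hypothesis $\itoverline{\alpha}_X > 1/p$.

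For the sufficiency, observe that $M_p$ is sublinear, of strong type $(\infty, \infty)$, and of weak type $(p, p)$. A Boyd-type interpolation theorem (cf.~\cite[Theorem~III.5.17]{BenSha}; the strong endpoint at $\infty$ removes the need for a lower-index hypothesis) then yields $M_p: X \to X$ boundedly whenever $\itoverline{\alpha}_X < 1/p$. A more self-contained route, in the spirit of Proposition~\ref{pro:Mp-weak2weak}, would pick $\alpha \in (\itoverline{\alpha}_X, 1/p)$, use submultiplicativity of $h_X$ to obtain $h_X(s) \le C_1 s^\alpha$ for $s \ge 1$, and discretize dyadically, exploiting monotonicity of $v$:
\[
  M_p v(t)^p = \int_0^1 v(\sigma t)^p\,d\sigma \le \sum_{k=0}^\infty 2^{-k-1}(E_{2^{-k-1}}v)(t)^p.
\]
The main obstacle in this direct approach is that the $\reps{X}$-norm of the $p$-th root of the sum is naturally controlled only in the $p$-concavification of $\reps{X}$, whose modulus of concavity can reach $2^{p-1}$; for $p \ge 2$ a bare Riesz--Fischer summation then loses against the geometric factor $2^{\alpha p - 1}$, so the interpolation route is preferable in full generality.
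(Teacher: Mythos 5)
Your proof is correct, but both halves take a genuinely different route from the paper. The paper squeezes $M_p u^*$ between two Hardy-type operators, $P_{1/q}u^* \lesssim M_p u^* \lesssim P_{1/p}u^*$ for any $q<p$ (using the embeddings $L^{p,1}\emb L^p \emb L^{p,\infty}$ applied to $u^*\chi_{(0,t)}$), and then reads off both directions from the single characterization $P_a\colon \reps X \to \reps X$ is bounded iff $\itoverline{\alpha}_X < a$ \cite[Theorem~III.5.15]{BenSha}. You instead prove necessity directly via the elementary dilation lower bound $M_p v \ge s^{-1/p}E_{1/s}v$, which gives $h_X(s) \lesssim s^{1/p}$ and hence $\itoverline{\alpha}_X \le 1/p$ without invoking Hardy operators at all; this is arguably more self-contained. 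For sufficiency you appeal to Boyd's interpolation theorem, using that $M_p$ is sublinear, of weak type $(p,p)$ and of strong type $(\infty,\infty)$, with the strong endpoint at $\infty$ removing the need for a lower Boyd index hypothesis; this is the standard packaged route, whereas the paper's comparison with $P_{1/p}$ is its unfolded, hands-on version. Both give the same conclusion, and the paper's is more uniform in that both implications stem from one comparison; yours has a more elementary necessity step.

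One small correction: the obstacle you anticipate for the dyadic self-contained variant does not actually arise. Having
\[
  M_p v(t) \le \biggl(\sum_{k=0}^\infty 2^{-(k+1)}\bigl(E_{2^{-k-1}}v\bigr)(t)^p\biggr)^{1/p} \le \sum_{k=0}^\infty 2^{-(k+1)/p}\,\bigl(E_{2^{-k-1}}v\bigr)(t),
\]
by the elementary inequality $(\sum a_k^p)^{1/p}\le\sum a_k$ for $p\ge1$, one can sum the norms with the ordinary triangle inequality in the Banach space $\reps X$ (no $p$-concavification is needed), and the geometric series $\sum_k 2^{-(k+1)/p} h_X(2^{k+1}) \lesssim \sum_k 2^{(k+1)(\alpha - 1/p)}$ converges for any $\alpha\in(\itoverline{\alpha}_X,1/p)$. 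So this direct route works for all $p\ge1$ and is in fact a discretization of the paper's $P_{1/p}$ bound. Since you fell back on Boyd's theorem anyway, this does not affect the validity of your argument; it only means your backup plan would also have succeeded.
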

%
%
\begin{proof}
Let $u\in X$. Using the embedding $L^{p,1}\emb L^p$, which follows by Lemma~\ref{lem:lorentz-embedding} since $ L^{p,1} = \Lambda^1(L^p)$ and $L^p = L^{p,p} = \Lambda^p(L^p)$, we may estimate for $t\in\Rbb^+$ that
\begin{align*}
  M_pu^*(t) &= \biggl( \fint_0^t u^*(s)^p\,ds \biggr)^{1/p} = t^{-1/p} \| u^* \chi_{(0,t)}\|_{L^p} \\ &
  \lesssim t^{-1/p} \| u^* \chi_{(0,t)}\|_{L^{p,1}} \approx t^{-1/p} \int_0^t u^*(s) s^{1/p}\,\frac{ds}{s} \eqcolon P_{1/p}u^*(t).
\end{align*}
Similarly, the embedding $L^{p} \emb L^{p,\infty} = M^*(L^p)$ yields the converse estimate
\begin{align*}
  M_p u^*(t) & = \frac{\|u^* \chi_{(0,t)}\|_{L^p}}{t^{1/p}} \gtrsim \frac{\|u^* \chi_{(0,t)}\|_{L^{p,\infty}}}{t^{1/p}} = \frac{\sup_{0<s<t} u^*(s) s^{1/p}}{t^{1/q}} t^{1/q - 1/p}\\
  & \approx \frac{\sup_{0<s<t} u^*(s) s^{1/p}}{t^{1/q}} \int_0^t s^{1/q-1/p} \frac{ds}{s} \ge t^{-1/q} \int_0^t u^*(s) s^{1/q} \frac{ds}{s} = P_{1/q} u^*(t)
\end{align*}
for arbitrary $q<p$.

According to~\cite[Theorem III.5.15]{BenSha}, the Hardy-type operator $P_{a}$ is a bounded mapping from $\reps X$ to $\reps X$ if and only if $\itoverline{\alpha}_X < a$. Suppose now that $\itoverline{\alpha}_X < 1/p$. Applying the Herz--Riesz inequality, we obtain that
\[
  \|M_pu\|_X = \|(M_pu)^*\|_{\reps X} \approx \|M_pu^*\|_{\reps X} \lesssim \|P_{1/p}u^*\|_{\reps X} \lesssim \|u^*\|_{\reps X} = \|u\|_X\,.
\]
If instead $\itoverline{\alpha}_X > 1/p$. Then, there is $q<p$ such that $\itoverline{\alpha}_X > 1/q > 1/p$. The Herz--Riesz inequality yields that
\[
  \sup_{\|u\|_X\le1} \|M_pu\|_{X} = \sup_{\|u\|_X\le1} \|(M_pu)^*\|_{\reps X} \approx \sup_{\|u^*\|_{\reps X}\le1} \|M_pu^*\|_{\reps X} \gtrsim \sup_{\|u^*\|_{\reps X}\le1} \|P_{1/q}u^*\|_{\reps X} = \infty.
\qedhere
\]
\end{proof}
%
%
\begin{rem}
Shimogaki~\cite{Shi} and Montgomery-Smith~\cite{MonSmi} have given examples of \ri spaces with $\itoverline{\beta}_X < \itoverline{\alpha}_X$. If we choose $p$ such that $\itoverline{\beta}_X < 1/p < \itoverline{\alpha}_X$, then $M_p: X \not\to X$, but $M_p: M^*(X) \to M^*(X)$ is bounded. Then, $M_p: X \to M^*(X)$ is bounded, which is a key hypothesis in Lemma~\ref{lem:key-Mp-wbdd}.

If $\itoverline{\alpha}_X = \itoverline{\beta}_X = 1/p$, then we cannot draw any satisfactory conclusions about the boundedness of $M_p$. For example, let $X=L^p$. Then, $M^*(X) = L^{p,\infty}$, whence neither $M_p: X \to X$ nor $M_p: M^*(X) \to M^*(X)$ is bounded. On the other hand, $M_p: X \to M^*(X)$ is bounded. If we instead consider $X=L^{p,q}$ for some $q>p$, then $L^p\subsetneq X \nsubset L^p_\loc$ and we can find a function $u\in X$ such that $M_pu \equiv \infty$.
\end{rem}
%
%
%
%
\section{Main results}
\label{sec:LipDensSpec}
The main theorem for $p$-Poincar\'e spaces as stated in its general form (Theorem~\ref{thm:general_Lip-dens}) depends on a weak type estimate for the maximal operator $M_p$, which may be deemed rather obscure. Using the results of Section~\ref{sec:weaktype}, we can replace this estimate by somewhat more tangible hypotheses. These will also allow us to find examples of base function spaces, for which the Newtonian functions can be approximated by Lipschitz continuous functions.

\begin{thm}
\label{thm:Mp-bdd_Lip-dens}
Assume that $\Pcal$ is a $p$-Poincar\'e space for some $p\in [1, \infty)$ and that $X \subset L^p_\fm$ is a quasi-Banach function lattice with absolutely continuous norm. Suppose that $M_p: X \to X_\fm$ is bounded. Then, the set of Lipschitz functions is dense in $\NX$.
\end{thm}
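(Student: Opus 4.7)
The plan is straightforward: this theorem is essentially a packaging of Theorem~\ref{thm:general_Lip-dens} in which the somewhat opaque weak-type hypothesis on superlevel sets is replaced by the far more tangible condition that $M_p$ be bounded from $X$ into $X_\fm$. Accordingly, I would combine Lemma~\ref{lem:key_est-M_bdd} with Theorem~\ref{thm:general_Lip-dens}, and no further ingredients should be necessary.

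First I would verify the weak-type estimate required by Theorem~\ref{thm:general_Lip-dens}, namely that $\|\sigma \chi_{\suplevp{p}{v}{\sigma}}\|_X \to 0$ as $\sigma \to \infty$ for every $v\in X$. This is precisely the conclusion of Lemma~\ref{lem:key_est-M_bdd}, and its three hypotheses are exactly what our theorem assumes: $X$ is a quasi-Banach function lattice with absolutely continuous quasi-norm, $X \subset L^p_\fm$, and $M_p: X \to X_\fm$ is bounded. Hence the weak-type estimate holds verbatim.

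With this in hand, Theorem~\ref{thm:general_Lip-dens} applies directly, since its remaining hypotheses — that $\Pcal$ is a $p$-Poincar\'e space and that $X$ is a quasi-Banach function lattice with absolutely continuous quasi-norm — are assumed. The conclusion is exactly that Lipschitz functions are dense in $\NX$.

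There is no real obstacle here; the statement should be a one-line deduction. The only mild point to notice is that we do \emph{not} need to place ourselves within the \ri framework of Section~\ref{sec:rispaces} or invoke Lemma~\ref{lem:key-Mp-wbdd}: Lemma~\ref{lem:key_est-M_bdd} was formulated precisely so as to cover the general quasi-Banach lattice setting that appears in the hypotheses of this theorem, which is why boundedness of $M_p$ is demanded into $X_\fm$ (rather than into the weak space $M^*(X)_\fm$ as in the rearrangement-invariant case).
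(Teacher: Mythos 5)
Your proposal is correct and coincides exactly with the paper's own argument: the paper likewise invokes Lemma~\ref{lem:key_est-M_bdd} to obtain the weak-type estimate $\|\sigma \chi_{\suplevp{p}{f}{\sigma}}\|_X \to 0$ for $f\in X$, and then applies Theorem~\ref{thm:general_Lip-dens} to conclude. Your remark that this route deliberately avoids the rearrangement-invariant machinery of Section~\ref{sec:rispaces} (in contrast to Theorem~\ref{thm:Mp-wbdd_Lip-dens}, which uses Lemma~\ref{lem:key-Mp-wbdd} instead) is also accurate.
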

%
%
\begin{proof}
By Lemma~\ref{lem:key_est-M_bdd}, we have that the boundedness of the maximal operator implies that $\Bigl\| \sigma \chi_{\suplevp{p}{f}{\sigma}} \Bigr\|_X \to 0$ as $\sigma \to \infty$ whenever $f\in X$, where $\suplevp{p}{f}{\sigma}$ is the superlevel set of $M_pf$ with level $\sigma$. The conclusion then follows from Theorem~\ref{thm:general_Lip-dens}.
\end{proof}
%
%
\begin{exa}
\label{exa:Lip-dens-LqcapLs}
Let $\Pcal$ be a $p$-Poincar\'e space for some $p\in [1, \infty)$. Let $X=L^q \cap L^s$, with a (quasi)norm given by $\|u\|_X = \max\{\|u\|_{L^q}, \|u\|_{L^s}\}$, where $0<q\le p<s<\infty$. We shall show that Lipschitz functions are dense in $\NX$. Note that if $\meas{\Pcal} = \infty$, then $X\subsetneq L^s$. If in addition $q<1$, then $X$ is not normable.

Both spaces $L^q$ and $L^s$ have absolutely continuous (quasi)norms. Thus, so has $X$.  We also need to show that $M_p: X \to X_\fm$ is bounded. Let $u\in X$ and let $E\subset \Pcal$ be of finite measure. The H\"older inequality implies that
\begin{align*}
  \|(M_p u) \chi_E\|_X & = \max\{\|(M_p u) \chi_E\|_{L^q}, \|(M_p u) \chi_E\|_{L^s}\} \\
	& \le c_{\meas{E}} \|(M_p u) \chi_E\|_{L^s} 
	\le c_{\meas{E}} \|M_p u\|_{L^s} \le \tilde{c}_{\meas{E}} \|u\|_{L^s} \le \tilde{c}_{\meas{E}} \|u\|_X.
\end{align*}
The density result now follows from Theorem~\ref{thm:Mp-bdd_Lip-dens}. It is also worth noting that if $\meas{\Pcal} = \infty$, then $M_p u \notin X$ (unless $u=0$ a.e.), but merely $M_p u \in L^s \cap L^{p,\infty}$.

If $q\ge 1$, then it is possible to draw the same conclusion on density of Lipschitz functions in $\NX$ using Example~\ref{exa:Mp-vs-Mploc} and Theorem~\ref{thm:Mp-wbdd-dtl_Lip-dens}~\ref{it:Lip-dens_X=Mploc} below.
\end{exa}
%
%
If the base function space is in fact an \ri space, then we obtain the required weak type estimate for the maximal operator $M_p$ even when $M_p$ is merely weakly bounded (on sets of finite measure).
%
%
\begin{thm}
\label{thm:Mp-wbdd_Lip-dens}
Assume that $\Pcal$ is a $p$-Poincar\'e space for some $p\in [1, \infty)$ and that $X$ is an \ri space with absolutely continuous norm. Suppose that $M_p: X \to \wMX_\fm$ is bounded. Then, the set of Lipschitz functions is dense in $\NX$.
\end{thm}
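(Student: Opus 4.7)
The plan is to follow exactly the same scheme as in the proof of Theorem~\ref{thm:Mp-bdd_Lip-dens}, but replacing the strong-type-based Lemma~\ref{lem:key_est-M_bdd} by its weak-type counterpart Lemma~\ref{lem:key-Mp-wbdd}, which was tailored precisely for this situation. In other words, the theorem follows by assembling two ingredients already developed in the paper, and no new genuine work is required.

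First, I verify that the hypotheses of Lemma~\ref{lem:key-Mp-wbdd} are satisfied: $X$ is an \ri space with absolutely continuous norm, and by assumption $M_p\colon X\to M^*(X)_\fm$ is bounded. Note that the embedding $X\emb L^p_\fm$ used inside Lemma~\ref{lem:key-Mp-wbdd} is not a separate hypothesis here; it is derived within the proof of that lemma from the boundedness of $M_p$ into $M^*(X)_\fm$ together with the axiom~\ref{df:BFL.locL1} of \ri spaces. Consequently, Lemma~\ref{lem:key-Mp-wbdd} yields the weak-type decay
\[
  \bigl\|\sigma \chi_{\suplevp{p}{v}{\sigma}}\bigr\|_X \to 0 \quad\mbox{as } \sigma\to\infty
\]
for every $v\in X$.

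Second, this decay is exactly the hypothesis required by Theorem~\ref{thm:general_Lip-dens}: we already have that $\Pcal$ supports a $p$-Poincar\'e inequality and that the quasi-norm of $X$ is absolutely continuous, and the weak estimate for $M_p$ holds on all of $X$. Applying Theorem~\ref{thm:general_Lip-dens} delivers density of Lipschitz functions in $\NX$, which is the desired conclusion.

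There is no real obstacle here, since both the structural reduction (weak boundedness of $M_p$ implies the superlevel-set estimate on \ri spaces) and the abstract density result (superlevel-set estimate implies density of Lipschitz functions on $p$-Poincar\'e spaces) have been proven earlier. The only point that deserves an explicit remark in the written proof is the derivation of $X\emb L^p_\fm$ from the boundedness of $M_p\colon X\to M^*(X)_\fm$, so that Lemma~\ref{lem:key-Mp-wbdd} is legitimately applicable; this can be handled in a single sentence referring back to that lemma's proof.
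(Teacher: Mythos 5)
Your proposal matches the paper's own proof exactly: Lemma~\ref{lem:key-Mp-wbdd} yields the decay $\bigl\|\sigma\chi_{\suplevp{p}{f}{\sigma}}\bigr\|_X\to 0$ for every $f\in X$, and Theorem~\ref{thm:general_Lip-dens} then gives the density. Your remark that the embedding $X\emb L^p_\fm$ is established internally in the proof of Lemma~\ref{lem:key-Mp-wbdd} rather than being a separate hypothesis is correct and shows you read the lemma carefully.
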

%
%
\begin{proof}
By Lemma~\ref{lem:key-Mp-wbdd}, we have that the boundedness of the maximal operator implies that $\Bigl\| \sigma \chi_{\suplevp{p}{f}{\sigma}} \Bigr\|_X \to 0$ whenever $f\in X$ as $\sigma \to \infty$. The conclusion then follows from Theorem~\ref{thm:general_Lip-dens}.
\end{proof}
%
%
As a special case, we obtain that if $\Pcal$ supports a $1$-Poincar\'e inequality, then the Lipschitz truncations are dense in all Newtonian spaces based on arbitrary \ri spaces with absolutely continuous norm.
%
%
\begin{thm}
\label{thm:1-PI_Lip-dens}
Assume that $\Pcal$ is a $1$-Poincar\'e space and that $X$ is an \ri space with absolutely continuous norm. Then, the set of Lipschitz functions is dense in $\NX$.
\end{thm}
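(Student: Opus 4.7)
The plan is to deduce Theorem~\ref{thm:1-PI_Lip-dens} as an immediate specialization of Theorem~\ref{thm:Mp-wbdd_Lip-dens} to the case $p=1$. The only hypothesis of Theorem~\ref{thm:Mp-wbdd_Lip-dens} that is not already assumed in the statement is the weak boundedness $M_1 : X \to \wMX_\fm$; everything else ($1$-Poincar\'e space, \ri space, absolute continuity) is given.

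First, I would invoke Proposition~\ref{pro:Mp-wbdd}. That proposition gives, as a special case stated explicitly there, that $M_1 : X \to \wMX$ is bounded for every \ri space $X$. This is obtained by combining the general statement $M_p : M^p(X) \to \wMX$ with the chain of embeddings $X \emb M(X) = M^1(X)$ from~\eqref{eq:LambdaX-X-MX-embed}, which are available with unit norms since $X$ is an \ri space. Boundedness into $\wMX$ trivially implies boundedness into $\wMX_\fm$, since for any measurable $E \subset \Pcal$ with $\meas{E}<\infty$ we have $\|(M_1 u) \chi_E\|_{\wMX} \le \|M_1 u\|_{\wMX} \lesssim \|u\|_X$ by the lattice property of $\wMX$.

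Having verified that $M_1 : X \to \wMX_\fm$ is bounded and recalling that $X$ has absolutely continuous norm and that $\Pcal$ supports a $1$-Poincar\'e inequality, all hypotheses of Theorem~\ref{thm:Mp-wbdd_Lip-dens} with $p=1$ are fulfilled, and the density of Lipschitz functions in $\NX$ follows at once.

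There is no real obstacle in this argument; the work has been done in the preceding sections. The only thing worth flagging is that the key underlying ingredient is the Herz--Riesz inequality (Proposition~\ref{pro:herz}), which is what makes $M_1$ automatically weakly bounded on every \ri space via the identification of $\wMX$ as the target. Apart from that, the proof is a straightforward chaining of Proposition~\ref{pro:Mp-wbdd} into Theorem~\ref{thm:Mp-wbdd_Lip-dens}.
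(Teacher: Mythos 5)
Your proof is correct and takes essentially the same route as the paper: both invoke Proposition~\ref{pro:Mp-wbdd} (via the embedding $X \emb M(X) = M^1(X)$) to obtain that $M_1 \colon X \to \wMX$ is bounded, and then apply Theorem~\ref{thm:Mp-wbdd_Lip-dens} with $p = 1$. Your extra remark that global boundedness into $\wMX$ trivially implies boundedness into $\wMX_\fm$ is a small but welcome explicitation of a step the paper leaves implicit.
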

%
%
\begin{proof}
By Proposition~\ref{pro:Mp-wbdd}, the maximal operator $M_1: X \to M^*(X)$ is bounded whenever $X$ is an \ri space. The conclusion then follows from Theorem~\ref{thm:Mp-wbdd_Lip-dens}.
\end{proof}
%
%
The absolute continuity of the norm is crucial for the density results. It is the only hypothesis that is violated in the following example, where we find a Newtonian function that cannot be approximated by (locally) Lipschitz functions.
%
%
\begin{exa}
Locally Lipschitz functions are not dense in $\NX$ in the setting of Example~\ref{exa:trunc_not_dense}. There, $X = M_\phi = M_\phi^*$ over $\Rbb^n$ and we considered a compactly supported radially decreasing function $u(x) = (f(|x|) - f(1))^+$, where $f(t) = t/\phi(t^n)$ for $t>0$ and $f(0) = f(0\limplus) = \infty$ due to the assumed properties of the fundamental function $\phi$. We also obtained that $g(x) = |f'(|x|)| \chi_{B(0,1)}(x)$, $x\in \Rbb^n$, was a minimal ($X$-weak) upper gradient of $u$. Moreover, we estimated that $|f'(t)| \approx 1/\phi(t^n)$.

Let now $v \in \NX$ be a locally Lipschitz function. The restriction $v|_{\overline{B(0,1)}}$ is a bounded $L$-Lipschitz function for some $L>0$. Let $h\in X$ be an upper gradient of $u-v$. Then, $h(x) \ge g(x) - L$ for a.e.\@ $x\in B(0,1)$. Hence
\begin{align*}
  \|u-v\|_\NX & = \|u-v\|_{X} + \|h\|_X \ge \|h \chi_{B(0,1)}\|_X \gtrsim \|(g(x) - L)^+\|_{M_\phi^*} \\
  & = \sup_{0<t<1} (|f'(t)| - L)^+ \phi(\omega_n t^n) \gtrsim \sup_{0<t<r_L} |f'(t)| \phi(t^n) \approx 1,
\end{align*}
where $\omega_n$ is the measure of the unit ball and $r_L = \inf\{r>0: |f'(r)| \le 2L\} > 0$. Therefore, $u\in \NX$ cannot be approximated by locally Lipschitz functions.
\end{exa}
%
%
In Section~\ref{sec:weaktype}, we elaborated various conditions that guarantee weak boundedness of the maximal operator $M_p$ on sets of finite measure. We may therefore concretize the assumptions of  Theorem~\ref{thm:Mp-wbdd_Lip-dens}. It becomes apparent that the well-known results on density of Lipschitz functions in $N^{1,p}$ on doubling $p$-Poincar\'e spaces, cf.\@ Shanmugalingam~\cite[Theorem 4.1]{Sha}, are recovered by our approach.
%
%
\begin{thm}
\label{thm:Mp-wbdd-dtl_Lip-dens}
Assume that $\Pcal$ is a $p$-Poincar\'e space for some $p\in [1, \infty)$ and that $X$ is an \ri space with absolutely continuous norm and fundamental function $\phi$. Suppose that any of the following conditions is satisfied:
\begin{enumerate}
	\item \label{it:Lip-dens_X=Mploc} $X \emb M^p_\loc(X)$;
	\item \label{it:Lip-dens_X=Lambdap} $X \emb \Lambda^p_{\psi,\fm}$, where $\psi$ is defined by \eqref{eq:quasiconv_dom};
	\item \label{it:Lip-dens_Lp=X} $L^{p, 1}(\Pcal) \emb X_\fm$ and $X \emb L^p_\fm(\Pcal)$;
	\item \label{it:Lip-dens_wbdd} $\phi(t)^p \fint_0^t \phi(s)^{-p}\,ds$ is bounded on $(0, \delta)$ for some $\delta > 0$;
	\item \label{it:Lip-dens_conc} $\phi^q$ is concave on $[0, \delta)$ for some $q>p$ and some $\delta>0$;
	\item \label{it:Lip-dens_quasiconc} $\phi(t)^q/t$ is decreasing for $t\in(0, \delta)$ for some $q>p$ and some $\delta>0$;
	\item \label{it:Lip-dens_m-Lp} $m_\phi \in L^p(0,1)$, where $m_\phi(s) = \sup_{0<t<1} \phi(t)/\phi(st)$;
	\item \label{it:Lip-dens_beta} the upper fundamental index $\itoverline{\beta}_X < 1/p$ (see Definition~\ref{df:indices});
	\item \label{it:Lip-dens_alpha} the upper Boyd index $\itoverline{\alpha}_X < 1/p$ (see Definition~\ref{df:indices}) .
\end{enumerate}
Then, the set of Lipschitz functions is dense in $\NX$.
\end{thm}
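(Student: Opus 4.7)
The plan is to reduce every one of the nine conditions to the hypothesis of Theorem~\ref{thm:Mp-wbdd_Lip-dens}, namely that $M_p\colon X\to \wMX_\fm$ is bounded, from which the density of Lipschitz functions follows at once.

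First I would handle (a) directly by Proposition~\ref{pro:Mp-wbdd}, which gives that $M_p\colon M^p_\loc(X)\to \wMX_\fm$ is bounded; composing with the assumed embedding $X\emb M^p_\loc(X)$ finishes this case. Condition (b) reduces to~(a) via Lemma~\ref{lem:Lambdap_MpX}, which states precisely that $\Lambda^p_\psi\emb M^p_\loc(X)$. For~(c) I would show that it implies~(b). Since $X$ is \ri and $L^{p,1}\emb X_\fm$, $X\emb L^p_\fm$, taking characteristic functions of sets of measure $t\in(0,1)$ and comparing fundamental functions gives $\phi(t)\approx t^{1/p}$ on bounded subintervals of $(0,\meas{\Pcal})$; then the definition \eqref{eq:quasiconv_dom} yields $\psi(t)\approx t^{1/p}$ on $(0,1)$, so $\Lambda^p_{\psi,\fm}$ coincides with $L^p_\fm$ up to equivalence of local norms, and the embedding $X\emb L^p_\fm$ becomes~(b).

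Next I would treat (d)--(h) as variants of the same reduction, passing through the weak Marcinkiewicz space $\wMX$. By Proposition~\ref{pro:Mp-weak2weak}, condition~(d) is exactly equivalent to the boundedness of $M_p\colon \wMX\to \wMX_\fm$; combined with the unit-norm embedding $X\emb \wMX$ from \eqref{eq:LambdaX-X-MX-embed}, this delivers the hypothesis of Theorem~\ref{thm:Mp-wbdd_Lip-dens}. Conditions~(e) and~(f) imply~(d) via Lemma~\ref{lem:fi-quasiconc_weak2weak}: concavity of $\phi^q$ on $[0,\delta)$ implies its quasi-concavity there by \eqref{eq:concave-quasiconcave}, while in~(f) the assumption that $\phi(t)^q/t$ is decreasing, together with monotonicity of $\phi$, gives quasi-concavity of $\phi^q$ on $[0,\delta)$ directly. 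Condition~(g) implies~(d) by Lemma~\ref{lem:m_phi-in-Lp}, and (h) implies~(d) by Lemma~\ref{lem:beta_m}.

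Finally, condition~(i) is handled separately: by Proposition~\ref{pro:alpha_M-bdd}, $\itoverline\alpha_X<1/p$ gives boundedness of $M_p\colon X\to X$, which combined once again with $X\emb \wMX$ yields $M_p\colon X\to \wMX$ and a fortiori $M_p\colon X\to \wMX_\fm$. Gathering the nine reductions, Theorem~\ref{thm:Mp-wbdd_Lip-dens} concludes the proof in each case. The main obstacle I foresee is item~(c), which is the only one not already packaged as a single lemma in Section~\ref{sec:weaktype}; the rest is essentially bookkeeping across the chain of results in that section.
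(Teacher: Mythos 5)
Your proposal is correct, and most of the chain of reductions coincides with the paper's proof: (a) via Proposition~\ref{pro:Mp-wbdd} and Theorem~\ref{thm:Mp-wbdd_Lip-dens}, (b) via Lemma~\ref{lem:Lambdap_MpX}, (d) via Proposition~\ref{pro:Mp-weak2weak} and the embedding $X\emb M^*(X)$, (e)--(f) via Lemma~\ref{lem:fi-quasiconc_weak2weak}, (g) via Lemma~\ref{lem:m_phi-in-Lp}, and (h) via Lemma~\ref{lem:beta_m}.

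The two places where you deviate, modestly, are (c) and (i). For (c), the paper does not route through (b): it notes that $\phi_X(t)\approx t^{1/p}$ near zero forces $M^*(X)_\fm = L^{p,\infty}_\fm$ with equivalent quasi-seminorms, and then verifies boundedness of $M_p\colon X\to M^*(X)_\fm$ by a direct Herz--Riesz computation, using the exact identity $\|M_p u^*\chi_{(0,b)}\|_{L^{p,\infty}}=\|u^*\chi_{(0,b)}\|_{L^p}$ for the decreasing rearrangement. Your route---deducing $\psi(t)\approx t^{1/p}$ on $(0,1)$ from \eqref{eq:quasiconv_dom} and then matching $\Lambda^p_{\psi,\fm}$ with $L^p_\fm$ to land in (b)---is equally valid, though one should say explicitly that the relevant comparison is $\psi(t)^p/t$ being bounded above on $(0,\meas{E})$ (with constant depending on $\meas{E}$), rather than a full two-sided coincidence of the two local spaces; that is all Lemma~\ref{lem:Lambdap_MpX} requires. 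For (i), the paper invokes Theorem~\ref{thm:Mp-bdd_Lip-dens} directly (which is the version for $M_p\colon X\to X_\fm$ bounded), while you compose $M_p\colon X\to X$ with $X\emb M^*(X)$ and pass through Theorem~\ref{thm:Mp-wbdd_Lip-dens}; either route works, and yours has the cosmetic benefit of funneling all nine cases through the single hypothesis of Theorem~\ref{thm:Mp-wbdd_Lip-dens}.
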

%
%
\begin{proof}
If $X \emb M^p_\loc(X)$, then $M_p: X\to M^*(X)_\fm$ is bounded by Proposition~\ref{pro:Mp-wbdd}. The conclusion in the case~\ref{it:Lip-dens_X=Mploc} then follows from Theorem~\ref{thm:Mp-wbdd_Lip-dens}.

If~\ref{it:Lip-dens_X=Lambdap} holds, then so does~\ref{it:Lip-dens_X=Mploc} by Lemma~\ref{lem:Lambdap_MpX}.

If~\ref{it:Lip-dens_Lp=X} holds, then $\phi_X(t) \approx t^{1/p}$ for $t$ near zero. Hence, $M^*(X)_\fm = L^{p,\infty}_\fm$ with equivalent quasi-seminorms. Let $E \subset \Pcal$ be of finite measure. In view of the Herz--Riesz inequality (Corollary~\ref{cor:herz}), we have that
\begin{multline*}
  \|(M_pu) \chi_E \|_{M^*(X)}  \le c_E \|(M_pu) \chi_E \|_{L^{p,\infty}} \le c_E \|(M_pu)^* \chi_{(0, \meas{E})} \|_{L^{p,\infty}} \\
   \approx c_E \|M_p u^* \chi_{(0, \meas{E})} \|_{L^{p,\infty}} =  c_E \|u^* \chi_{(0, \meas{E})} \|_{L^p} = c_E \|u \chi_G\|_{L^p} \le c_E' \|u\|_X,
\end{multline*}
where $G \in \suplevr{u}{\meas{E}}$ and $\suplevr{u}{\cdot}$ is the family of\fcrim{} ``superlevel sets'' defined by \eqref{eq:df-suplevr}. Thus, $M_p: X \to M^*(X)_\fm$ is bounded. Theorem~\ref{thm:Mp-wbdd_Lip-dens} now finishes the argument.

If~\ref{it:Lip-dens_wbdd} holds, then $M_p: M^*(X) \to M^*(X)_\fm$ is bounded by Proposition~\ref{pro:Mp-weak2weak}. The conclusion then follows from Theorem~\ref{thm:Mp-wbdd_Lip-dens} since $X \emb M^*(X)$.

If~\ref{it:Lip-dens_conc} or~\ref{it:Lip-dens_quasiconc} is satisfied, then so is~\ref{it:Lip-dens_wbdd} by Lemma~\ref{lem:fi-quasiconc_weak2weak}.

If~\ref{it:Lip-dens_m-Lp} holds, then so does~\ref{it:Lip-dens_wbdd} by Lemma~\ref{lem:m_phi-in-Lp}.

If~\ref{it:Lip-dens_beta} is satisfied, then so is~\ref{it:Lip-dens_m-Lp} by Lemma~\ref{lem:beta_m}.

If~\ref{it:Lip-dens_alpha} holds, then $M_p: X\to X$ is bounded by Proposition~\ref{pro:alpha_M-bdd}. Besides, $X \subset L^1_\fm$ by~\ref{df:BFL.locL1}. The desired result follows from Theorem~\ref{thm:Mp-bdd_Lip-dens}.
\end{proof}
%
%
In complete metric spaces, the conditions on the function space can be weakened.
%
%
\begin{thm}
\label{thm:Mp-wbdd-dtl_Lip-dens-comp}
Assume that $\Pcal$ is a complete $p$-Poincar\'e space for some $p\in [1, \infty)$ and that $X$ is an \ri space with absolutely continuous norm and fundamental function $\phi$. Suppose that any of the following conditions is satisfied:
\begin{enumerate}
	\item \label{it:Lip-dens_conc-comp} $\phi^p$ is concave on $[0, \delta)$ for some $\delta>0$;
	\item $\phi^p(t)/t$ is decreasing for $t\in (0, \delta)$ for some $\delta>0$;
	\item the upper fundamental index $\itoverline{\beta}_X \le 1/p$;
	\item the upper Boyd index $\itoverline{\alpha}_X \le 1/p$.
\end{enumerate}
Then, the set of Lipschitz functions is dense in $\NX$.
\end{thm}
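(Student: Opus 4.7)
\medskip

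The plan is to reduce Theorem~\ref{thm:Mp-wbdd-dtl_Lip-dens-comp} to the non-complete version, Theorem~\ref{thm:Mp-wbdd-dtl_Lip-dens}, by exploiting the self-improvement of Poincar\'e inequalities. The key input is the theorem of Keith and Zhong: if $\Pcal$ is complete, endowed with a doubling measure, and supports a weak $p$-Poincar\'e inequality for some $p>1$, then there exists $\eps = \eps(\Pcal) > 0$ such that $\Pcal$ in fact supports a weak $(p-\eps)$-Poincar\'e inequality. The case $p=1$ is already handled by Theorem~\ref{thm:1-PI_Lip-dens}, which imposes no condition on $\phi$ beyond absolute continuity of the norm; thus, I may assume $p>1$, and fix $\eps \in (0, p-1)$ with the above property, writing $p' \coloneq p - \eps$.

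The argument then consists of checking that each of the four hypotheses of Theorem~\ref{thm:Mp-wbdd-dtl_Lip-dens-comp} implies a corresponding hypothesis of Theorem~\ref{thm:Mp-wbdd-dtl_Lip-dens} with the exponent $p'$ in place of $p$. Since $p' < p$, the exponent $q \coloneq p$ satisfies $q > p'$, so:
\begin{itemize}
\item Under condition~\ref{it:Lip-dens_conc-comp}, $\phi^q = \phi^p$ is concave on $[0,\delta)$, which is condition~\ref{it:Lip-dens_conc} of Theorem~\ref{thm:Mp-wbdd-dtl_Lip-dens} with exponent $p'$.
\item Under~(b), $\phi(t)^q/t = \phi(t)^p/t$ is decreasing on $(0,\delta)$, giving condition~\ref{it:Lip-dens_quasiconc} there.
\item Under~(c), $\itoverline{\beta}_X \le 1/p < 1/p'$, which is condition~\ref{it:Lip-dens_beta} for the exponent $p'$.
\item Under~(d), $\itoverline{\alpha}_X \le 1/p < 1/p'$, which is condition~\ref{it:Lip-dens_alpha} for the exponent $p'$.
\end{itemize}

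In each case, Theorem~\ref{thm:Mp-wbdd-dtl_Lip-dens} applied on the $p'$-Poincar\'e space $\Pcal$ yields density of the Lipschitz functions in $\NX$, which is precisely the conclusion sought.

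The main obstacle is really no obstacle at all at the level of this proof: all the work is packaged inside the deep Keith--Zhong self-improvement theorem, whose reliance on completeness of $\Pcal$ explains why the present theorem, with equalities $\itoverline{\alpha}_X = 1/p$ or $\itoverline{\beta}_X = 1/p$ permitted, requires this extra hypothesis, in contrast to the strict inequalities required by Theorem~\ref{thm:Mp-wbdd-dtl_Lip-dens}. Aside from invoking that result, the proof reduces to the trivial observation that passing from $p$ to a slightly smaller exponent $p'$ turns each borderline condition into a strictly better one.
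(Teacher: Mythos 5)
Your proposal is correct and takes essentially the same approach as the paper: dispose of $p=1$ via Theorem~\ref{thm:1-PI_Lip-dens}, then invoke Keith--Zhong self-improvement to pass from a $p$-Poincar\'e inequality to an $r$-Poincar\'e inequality with $r<p$, after which each borderline condition becomes a strict one in Theorem~\ref{thm:Mp-wbdd-dtl_Lip-dens} with $r$ and $p$ playing the roles of $p$ and $q$ there.
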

\begin{proof}
If $p=1$, then all of the conditions are trivially satisfied (recall that $\phi$ is concave for a well-chosen equivalent norm on $X$) and the claim follows by Theorem~\ref{thm:1-PI_Lip-dens}.

Suppose instead that $p>1$. Keith and Zhong~\cite{KeiZho} have proven that $\Pcal$, being complete, supports an $r$-Poincar\'{e} inequality for some $r<p$. The claim then follows by Theorem~\ref{thm:Mp-wbdd-dtl_Lip-dens}, where $p$ and $q$ are to be replaced by $r$ and $p$, respectively.
\end{proof}
%
%
Costea and Miranda discussed the density of Lipschitz functions in Newtonian spaces based on the Lorentz $L^{p,q}$ spaces in~\cite{CosMir}. There, they showed the density whenever $1 \le q \le p < \infty$ under the assumptions that the underlying metric measure space is complete and supports an \emph{$L^{p,q}$-Poincar\'e inequality}, i.e., there are $c_\PI>0$ and $\lambda \ge 1$ such that
\[
  \fint_B |u - u_B| \le c_\PI \diam (B) \frac{\|g \chi_{\lambda B}\|_{L^{p,q}}}{\meas{\lambda B}^{1/p}}
\]
for every ball $B\subset \Pcal$, every function $u\in L^1_\loc(\Pcal)$ and every upper gradient $g$ of $u$. They gave an example showing that one cannot hope for the wanted result while considering $L^{p,\infty}$ spaces. The question of $L^{p,q}$ for $1\le p<q<\infty$ remained however open. In their proof, the Lorentz-type maximal operator
\[
  M_{p,q} u (x) = \sup_{B \ni x} \frac{\|u \chi_B\|_{L^{p,q}}}{\meas{B}^{1/p}}, \quad x\in\Pcal,
\]
and its boundedness as a mapping from $L^{p,q}$ to $L^{p, \infty}$ for $q \le p$ were used, which is exactly the place where the proof would have failed for $q>p$. The boundedness of $M_{p,q}: L^{p,q}\to L^{p, \infty}$ for $q> p$ was however left unsolved in~\cite{CosMir}. Chung, Hunt and Kurtz~\cite[pp. 119--120]{ChuHunKur} gave an example showing that $M_{p,q}$ is not bounded from $L^{p,q}$ to $L^{p, \infty}_\fm$ if $q>p$. 
Nevertheless, the following propositions give an affirmative answer to the density of Lipschitz continuous functions in $N^1L^{p,q}(\Pcal)$ even for $1<p<q<\infty$. First, we will assume a stronger Poincar\'{e} inequality.
%
%
\begin{pro}
\label{pro:cosmir-gen}
If $\fcrim\Pcal$ supports an $L^{r,s}$-Poincar\'{e} inequality for some $r,s\in[1, \infty]$, then Lipschitz functions are dense in $N^1 L^{p,q}$ whenever $p\in(r, \infty)$ and $q\in[1, \infty)$.
\end{pro}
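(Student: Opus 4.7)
The plan is to reduce to the classical $p'$-Poincar\'e setting of Theorem~\ref{thm:Mp-wbdd-dtl_Lip-dens} by first exchanging the $L^{r,s}$-Poincar\'e inequality for an ordinary $p'$-Poincar\'e inequality, where $p'$ is chosen in the non-empty interval $(r,p)$.

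First, I would fix any $p' \in (r, p)$ and establish the embedding
\[
  \|h \chi_E\|_{L^{r,s}} \lesssim \meas{E}^{1/r - 1/p'} \|h \chi_E\|_{L^{p'}}
\]
for every measurable set $E$ of finite measure and every measurable $h$. This is a standard fact about Lorentz spaces on sets of finite measure whose proof amounts to combining the monotonicity $\|h\chi_E\|_{L^{r,s}} \le \|h\chi_E\|_{L^{r,1}}$ (valid since $s \ge 1$) with H\"older's inequality applied to $\int_0^{\meas E} (h\chi_E)^*(t)\,t^{1/r - 1}\,dt$, using H\"older exponents $p'$ and $p'/(p'-1)$; the relevant power-integral converges precisely because $p' > r$. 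Applying this estimate to $h = g$ and $E = \lambda B$ on the right-hand side of the $L^{r,s}$-Poincar\'e inequality and dividing by $\meas{\lambda B}^{1/r}$ yields
\[
  \fint_B |u - u_B|\,d\mu \lesssim \diam (B) \biggl(\fint_{\lambda B} g^{p'}\,d\mu\biggr)^{1/p'},
\]
i.e., $\Pcal$ is a $p'$-Poincar\'e space in the sense of Definition~\ref{df:pPI}.

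Second, I would apply Theorem~\ref{thm:Mp-wbdd-dtl_Lip-dens} with the Poincar\'e exponent equal to $p'$. Since $p > r \ge 1$ and $q < \infty$, the space $X = L^{p,q}$ (appropriately renormed via $f^{**}$ when $q > p$) is an \ri space with absolutely continuous norm and fundamental function $\phi(t) \approx t^{1/p}$. Verifying the hypotheses reduces to noting that condition~\ref{it:Lip-dens_quasiconc} of the theorem holds with the auxiliary exponent $p > p'$, because $\phi(t)^p/t \approx 1$ is (trivially) decreasing on $(0,\infty)$; equivalently, one may invoke condition~\ref{it:Lip-dens_alpha}, since the upper Boyd index of $L^{p,q}$ equals $1/p$, which is strictly less than $1/p'$. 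Theorem~\ref{thm:Mp-wbdd-dtl_Lip-dens} then delivers the density of Lipschitz functions in $N^1 L^{p,q}$.

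The main technical point where care is required is the derivation of the $p'$-Poincar\'e inequality from the $L^{r,s}$-Poincar\'e inequality, because one must choose the H\"older exponents so that the resulting power of $\meas{\lambda B}$ has precisely the exponent $1/r - 1/p'$ needed to cancel the normalizing factor $\meas{\lambda B}^{1/r}$; this is the single place where the strict inequality $p > r$ is essential. The rest of the argument is mostly an accounting of indices and an application of the already-proven framework.
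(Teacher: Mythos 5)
Your proposal is correct and follows essentially the same route as the paper's proof: choose an intermediate exponent in $(r,p)$, downgrade the $L^{r,s}$-Poincar\'e inequality to a standard Poincar\'e inequality at that exponent via a local Lorentz--Lebesgue embedding, then invoke Theorem~\ref{thm:Mp-wbdd-dtl_Lip-dens} with fundamental function $\phi(t)=t^{1/p}$. The paper cites condition~\ref{it:Lip-dens_conc} (concavity of $\phi^p$) where you cite~\ref{it:Lip-dens_quasiconc} or~\ref{it:Lip-dens_alpha}, but these are equivalent for this $\phi$; the one substantive difference is that you spell out the quantitative H\"older computation that produces the exact power $\meas{\lambda B}^{1/r-1/p'}$, whereas the paper compresses this into the single phrase ``due to the embedding $L^{\check r}\emb L^{r,s}_\fm$.''
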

%
%
\begin{proof}
Due to the embedding between Lorentz spaces $L^{\textit{\v{r}}} \emb L^{r,s}_\fm$, we see that $\Pcal$ is actually an $\textit{\v{r}}$-Poincar\'{e} space whenever $\textit{\v{r}} \in (r,p)$. Since $q<\infty$, the space $L^{p,q}$ has absolutely continuous norm. The fundamental function of $L^{p,q}$ satisfies $\phi(t)^p = t$, which is concave. Therefore, the condition~\ref{it:Lip-dens_conc} of Theorem~\ref{thm:Mp-wbdd-dtl_Lip-dens} is fulfilled for $X=L^{p,q}$ and the $\textit{\v{r}}$-Poincar\'{e} space $\Pcal$, whence Lipschitz functions are dense in $N^1L^{p,q}$.
\end{proof}
%
%
Finally, we are prepared to show the density result for the case $1<p<q<\infty$ in the setting of~\cite{CosMir}, cf.\@ Theorem 6.9 therein. 
%
%
\begin{pro}
\label{pro:cosmir}
Let $(\Pcal, \dd, \mu)$ be a complete metric measure space with a doubling measure. Suppose that $\Pcal$ admits an $L^{p,q}$-Poincar\'{e} inequality with $1<p<q<\infty$. Then, Lipschitz functions are dense in $N^1 L^{p,q}$.
\end{pro}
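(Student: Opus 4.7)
The plan is to reduce Proposition~\ref{pro:cosmir} to the already established Proposition~\ref{pro:cosmir-gen}, which requires an $L^{r,s}$-Poincar\'e inequality with first exponent $r$ strictly below $p$. The difficulty is that the given $L^{p,q}$-Poincar\'e inequality has first exponent exactly $p$, so Proposition~\ref{pro:cosmir-gen} does not apply directly; one has to lower the exponent, and this is where the completeness of $\Pcal$ enters through the Keith--Zhong theorem.

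First, I would extract standard $\check{p}$-Poincar\'e inequalities for every $\check{p} \in (p, q)$ from the given $L^{p,q}$-Poincar\'e inequality, using an argument analogous to the one in the proof of Proposition~\ref{pro:cosmir-gen}. The H\"older-type inequality for Lorentz spaces on a ball of finite measure yields
\[
  \|g \chi_{\lambda B}\|_{L^{p,q}} \lesssim |\lambda B|^{1/p - 1/\check{p}} \|g \chi_{\lambda B}\|_{L^{\check{p}}},
\]
and substituting this into the hypothesized $L^{p,q}$-Poincar\'e inequality cancels the powers of $|\lambda B|$ and yields
\[
  \fint_B |u - u_B|\, d\mu \lesssim \diam(B) \frac{\|g \chi_{\lambda B}\|_{L^{\check{p}}}}{|\lambda B|^{1/\check{p}}},
\]
that is, a genuine $\check{p}$-Poincar\'e inequality on $\Pcal$.

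Next, since $\Pcal$ is complete and doubling and supports $\check{p}$-Poincar\'e for every $\check{p}$ in the open interval $(p, q)$, the Keith--Zhong self-improvement theorem~\cite{KeiZho} yields an $r$-Poincar\'e inequality for some $r < \check{p}$. By choosing $\check{p}$ sufficiently close to $p$ from above and invoking the quantitative Keith--Zhong improvement, the gap $\check{p} - r$ should exceed $\check{p} - p$, producing $r < p$; alternatively one can invoke a Lorentz-adapted form of Keith--Zhong that directly self-improves $L^{p,q}$-Poincar\'e to $L^{p - \epsilon, q}$-Poincar\'e. Once such an $r$-Poincar\'e inequality (equivalently $L^{r,r}$-Poincar\'e) with $r < p$ is in hand, Proposition~\ref{pro:cosmir-gen} applied with $(r, s) = (r, r)$ gives the conclusion, since the hypotheses $p \in (r, \infty)$ and $q \in [1, \infty)$ are both satisfied.

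The principal obstacle is the Keith--Zhong step: the Poincar\'e constant of the derived $\check{p}$-Poincar\'e inequality may deteriorate as $\check{p} \to p^{+}$, which could in principle shrink the self-improvement decrement and prevent one from crossing below $p$. Controlling this interaction quantitatively is the crucial technical point that makes the otherwise natural reduction to Proposition~\ref{pro:cosmir-gen} succeed and thereby closes the case $1 < p < q < \infty$ left open in~\cite{CosMir}.
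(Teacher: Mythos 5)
Your plan is on the right track conceptually (lower the Poincar\'e exponent below $p$, then invoke the general result), but there is a genuine gap in how you try to get there, and it is one you yourself flag as ``the crucial technical point'' without resolving. You derive $\check{p}$-Poincar\'e inequalities only for $\check{p}>p$, using the finite-measure embedding $L^{\check{p}}(\lambda B)\hookrightarrow L^{p,q}(\lambda B)$. The constant in that embedding behaves like $(\check{p}/(\check{p}-p))^{1/q}$ and blows up as $\check{p}\to p^+$, so the derived $\check{p}$-Poincar\'e constants degenerate exactly when you need them to be controlled; you then hope that a ``quantitative'' or ``Lorentz-adapted'' Keith--Zhong theorem supplies a decrement large enough to cross below $p$, but no such quantitative version is invoked in the paper or readily available, and the qualitative Keith--Zhong statement gives no uniform control over the size of the improvement as the input data deteriorate. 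As written, the argument does not close.

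The missing observation is much simpler than what you are attempting. Since $q>p$, the global Lorentz nesting $L^{p}=L^{p,p}\hookrightarrow L^{p,q}$ (Lemma~\ref{lem:lorentz-embedding} with $\phi(t)=t^{1/p}$) gives $\|g\chi_{\lambda B}\|_{L^{p,q}}\lesssim\|g\chi_{\lambda B}\|_{L^{p}}$ with a constant \emph{independent of the ball}. Substituting this into the $L^{p,q}$-Poincar\'e inequality yields a genuine $p$-Poincar\'e inequality at once, with uniform constants. Now, completeness and doubling allow a single application of Keith--Zhong~\cite{KeiZho} to the $p$-Poincar\'e inequality, producing an $r$-Poincar\'e inequality with $r<p$. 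At that point one is in the situation of Proposition~\ref{pro:cosmir-gen} with $(r,s)=(r,r)$, or equivalently one can invoke Theorem~\ref{thm:Mp-wbdd-dtl_Lip-dens}\,\ref{it:Lip-dens_conc} directly with ``$p$'' replaced by $r$ and ``$q$'' by $p$, noting that $\phi_{L^{p,q}}(t)^p=t$ is concave and $p>r$. This is exactly what the paper does, packaged as Theorem~\ref{thm:Mp-wbdd-dtl_Lip-dens-comp}\,\ref{it:Lip-dens_conc-comp}. In short: do not H\"older up to $\check{p}>p$ and then try to self-improve past $p$; instead use the free (uniform) embedding $L^p\hookrightarrow L^{p,q}$ to land exactly at $p$, and let Keith--Zhong supply the strict decrement from there.
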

%
%
\begin{proof}
Due to the embedding between Lorentz spaces, $\Pcal$ is actually a $p$-Poincar\'{e} space. Lipschitz functions are dense in $N^1L^{p,q}$ by Theorem~\ref{thm:Mp-wbdd-dtl_Lip-dens-comp}\,\ref{it:Lip-dens_conc-comp} as $\phi_{L^{p,q}}(t)^p = t$ is concave on $\Rbb^+$.
\end{proof}
%
%
Note that the previous proposition does not discuss the case $1=p<q < \infty$ as $L^{1,q}$ are not  \ri spaces for $q>1$, but mere rearrangement-invariant quasi-Banach function lattices. It was shown in~\cite[Example~2.6]{Mal1} that the Newtonian space may be trivial then, i.e., $N^1 L^{1,q} = L^{1,q}$, even though there are many curves in the metric measure space. In this case, the density can be established using arguments similar to those in Proposition~\ref{pro:NX=X-dens} above.
%
%
%
%

\end{document}